\numberwithin{equation}{section}
\definecolor{burntumber}{rgb}{0.54, 0.2, 0.14}
\definecolor{coolblack}{rgb}{0.0, 0.18, 0.39}
\definecolor{mygreen}{rgb}{0.0, 0.4, 0.0}
\setlist[enumerate]{leftmargin=2pc}
\setlist[itemize]{leftmargin=2pc}
\def\l@subsection{\@tocline{2}{0pt}{2pc}{5pc}{}}
\theoremstyle{plain}
\newtheorem{thm}{Theorem}[section]
\newtheorem{lemma}[thm]{Lemma}
\newtheorem{cor}[thm]{Corollary}
\newtheorem{prop}[thm]{Proposition}
\newtheorem{conj}[thm]{Conjecture}
\newtheorem{introthm}{Theorem}
 \theoremstyle{definition}
\newtheorem{Def}[thm]{Definition}
\newtheorem{rmk}[thm]{Remark}
\newtheorem{?}[thm]{Problem}
\DeclareSymbolFontAlphabet{\mathbb}{AMSb} %to ensure that the meaning of \mathbb does not change
\DeclareSymbolFontAlphabet{\mathbbl}{bbold}
\newcommand{\Prism}{{\mathlarger{\mathbbl{\Delta}}}}
\newcommand{\bA}{\mathbb{A}}
\newcommand{\bC}{\mathbb{C}}
\newcommand{\bG}{\mathbb{G}}
\newcommand{\bL}{\mathbb{L}}
\newcommand{\bM}{\mathbb{M}}
\newcommand{\bP}{\mathbb{P}}
\newcommand{\bQ}{\mathbb{Q}}
\newcommand{\bR}{\mathbb{R}}
\newcommand{\bZ}{\mathbb{Z}}
\newcommand{\sP}{\mathscr{P}}
\newcommand{\sS}{\mathscr{S}}
\newcommand{\sV}{\mathscr{V}}
\newcommand{\Hom}{{\rm{Hom}}}
\newcommand{\id}{\textup{id}}
\newcommand{\res}{\big|}
\newcommand{\overbar}[1]{\mkern 1.5mu\overline{\mkern-1.5mu#1\mkern-1.5mu}\mkern 1.5mu}
\newcommand{\Spec}{\operatorname{Spec}}
\newcommand{\Spf}{\operatorname{Spf}}
\newcommand{\Spa}{\operatorname{Spa}}
\newcommand{\Spd}{\operatorname{Spd}}
\newcommand{\Perf}{\textup{Perf}}
\newcommand{\red}{\textup{red}}
\newcommand{\GL}{\textup{GL}}
\newcommand{\Res}{\textup{Res}}
\newcommand{\bs}{\backslash}
\newcommand{\ab}{\textup{ab}}
\newcommand{\Perfk}{\textup{Perf}_k}
\newcommand{\bdot}{\boldsymbol{.}}
\newcommand{\bdtimes}{\buildrel{\bdot}\over\times}
\newcommand{\Frob}{\textup{Frob}}
\newcommand{\crys}{\textup{crys}}
\newcommand{\Loc}{\textup{Loc}}
\newcommand{\ul}{\underline}
\newcommand{\Isom}{\textup{Isom}}
\newcommand{\Aut}{\textup{Aut}}
\newcommand{\Tors}{\textup{Tors}}
\newcommand{\eE}{{\sf E}}
\newcommand{\eG}{{\sf G}}
\newcommand{\eH}{{\sf H}}
\newcommand{\eK}{{\sf K}}
\newcommand{\eT}{{\sf T}}
\newcommand{\eZ}{{\sf Z}}
\newcommand{\fM}{\mathfrak{M}}
\newcommand{\fS}{\mathfrak{S}}
\newcommand{\cE}{\mathcal{E}}
\newcommand{\cG}{\mathcal{G}}
\newcommand{\cI}{\mathcal{I}}
\newcommand{\cM}{\mathcal{M}}
\newcommand{\cO}{\mathcal{O}}
\newcommand{\cP}{\mathcal{P}}
\newcommand{\cT}{\mathcal{T}}
\newcommand{\cX}{\mathcal{X}}
\newcommand{\cY}{\mathcal{Y}}
\newcommand{\Sh}{\textup{Sh}}
\newcommand{\ad}{\textup{ad}}
\newcommand{\dR}{\textup{dR}}
\newcommand{\Gr}{\textup{Gr}}
\newcommand{\Sht}{\textup{Sht}}
\newcommand{\Rep}{\textup{Rep}}
\newcommand{\Vect}{\textup{Vect}}
\newcommand{\univ}{\textup{univ}}
\newcommand{\LT}{\textup{LT}}
\newcommand{\Gal}{\textup{Gal}}
\newcommand{\marginalfootnote}[1]{
	\footnote{#1}
	\marginpar[{\hfill\sf\thefootnote}]{{\sf\thefootnote}}}
\newcommand{\edit}[1]{\marginalfootnote{#1}}
\LetLtxMacro\Oldedit\edit
\newcommand{\EnableEdits}{%
	\LetLtxMacro\edit\Oldedit%
}
\tikzset{
	labl/.style={anchor=south, rotate=90, inner sep=.5mm}
}
\begin{document}
\begin{abstract}
	We prove the Pappas-Rapoport conjecture on the existence of canonical integral models of Shimura varieties with parahoric level structure in the case where the Shimura variety is defined by a torus. As an important ingredient, we show, using the Bhatt-Scholze theory of prismatic $F$-crystals, that there is a fully faithful functor from $\mathcal{G}$-valued crystalline representations of $\Gal(\bar{K}/K)$ to $\cG$-shtukas over $\Spd(\mathcal{O}_K)$, where $\cG$ is a parahoric group scheme over $\mathbb{Z}_p$ and $\mathcal{O}_K$ is the ring of integers in a $p$-adic field $K$. 
\end{abstract}

\title[Integral models for Shimura varieties of toral type]{Canonical integral models for Shimura varieties of toral type}

\author{Patrick Daniels}
\thanks{Research partially supported by NSF DMS-1840234}
\address{Mathematics and Statistics Department, Skidmore College, Saratoga Springs, NY 12866}

%Science \\ H-1117 Budapest
%\\ P\'{a}zm\'{a}ny P\'{e}ter s\'{e}t\'{a}ny 1/C \\ Hungary} 
%
\email{pdaniels@skidmore.edu}
%
% \subjclass[2010]{Primary: 05C??. Secondary: 05C??}
%
% \keywords{sample paper} 

%\begin{abstract} 
%\end{abstract}

\maketitle
\tableofcontents

\section{Introduction}

In a recent breakthrough, Pappas and Rapoport \cite{PR2021} have given conditions which uniquely characterize integral models of Shimura varieties when the level subgroup at $p$ is parahoric. The aim of this work is to show that integral models satisfying these conditions exist for Shimura varieties of toral type, i.e., those defined by a torus. 

%Suppose that $\eT$ is a torus over $\bQ$, and that $(\eT, X)$ is a Shimura datum. Then for any compact open subgroup $\eK \subset \eT(\bA_f)$, we have a Shimura variety $\Sh_\eK(\eT,X)$, whose $\bC$-points are given by the finite set
%\begin{align*}
%	\Sh_\eK(\eT,X) = \eT(\bQ)\bs X \times \eT(\bA_f) / \eK.
%\end{align*}

%Shimura varieties are geometric objects with enormous arithmetic significance. For example, it is predicted by Langland's philosophy that the 

Before we go into detail, let us give some background on the study of integral models of Shimura varieties. When the level subgroup at $p$ of a Shimura variety is hyperspecial, it is expected that integral models with smooth reduction at $p$ exist (see \cite{Langlands1976}, for example). In that case, Milne observed \cite{Milne1992} that the collection of smooth integral models as the level away from $p$ varies can be uniquely characterized by an extension property, similar in nature to the N\'eron mapping property. Milne's conjecture was later refined by Moonen, who corrected the class of test schemes to be used in the extension property \cite{Moonen1998}. For Shimura varieties of abelian type, smooth integral models satisfying the extension property have been constructed by Kisin \cite{Kisin2010}, see also work of Vasiu \cite{Vasiu1999}. 

For many applications, one is primarily interested in the mod $p$ points of a given integral model of a Shimura variety, and it is from this interest that the necessity of a unique characterization of integral models arises. Indeed, different integral models may, a priori, give rise to different reductions modulo $p$. In order to avoid complications of this nature, one must start with a well chosen (or canonical) system of integral models.  

When the level at $p$ is more general than hyperspecial, however, the integral models can no longer always be expected to be smooth. Indeed, the singularities should be controlled by the associated local models, and these are known to be smooth only when the level structure is hyperspecial, and in certain exceptional cases, see \cite{HPR}. This type of pathology even occurs in some simple cases, such as the case of $\GL_2$ with Iwahori level structure, see, e.g., \cite[\textsection 1]{Rapoport2005}. Without smoothness, a characterization of integral models is not straightforward.

Pappas and Rapoport's \cite{PR2021} key observation is that Scholze's theory of $p$-adic shtukas over perfectoid spaces \cite{SW2020} can be extended to arbitrary $\bZ_p$-schemes. When the level at $p$ is parahoric, the problem of characterizing integral models can be solved by requiring the existence of a shtuka which lives over the integral model and satisfies certain properties. In the case of Hodge type Shimura data, Pappas and Rapoport construct integral models which admit such a shtuka.

To be more precise, let $(\eG, X)$ be a Shimura datum. Associated to every neat compact open subgroup $\eK \subset \eG(\bA_f)$ is the Shimura variety $\Sh_\eK(\eG,X)$, which admits a canonical model over a number field $\eE$. Fix a parahoric subgroup $\eK_p \subset \eG(\bQ_p)$ with associated $\bZ_p$-group scheme $\cG$. For the remainder of the introduction, we will consider only subgroups $\eK \subset \eG(\bA_f)$ which can be written $\eK=\eK_p\eK^p$ for some neat $\eK^p \subset \bG(\bA_f^p)$. Fix a place $v$ above $p$, and let $E$ be the completion of $\eE$ at $v$. We let $k$ denote an algebraic closure of the residue field of $E$. 

To simplify notation, throughout the introduction we assume the $\bQ$-split rank and the $\bR$-split rank of $\eG$ are equal; we work more generally in the body of the text. Using the work of Liu and Zhu \cite{LZ17}, Pappas and Rapoport show that there exists a $\cG$-shtuka $\sP_{\eK, E}$ over $(\Sh_\eK(\eG,X)_E)^\lozenge$ defined by the pro-\'etale $\cG(\bZ_p)$-cover
\begin{equation}\label{eq-prosystem}
	\Sh_{\eK_p'\eK^p}(\eG,X)_E \to \Sh_{\eK_p\eK^p}(\eG,X)_E
\end{equation}
as $\eK_p'$ varies over compact open subgroups of $\eK_p$. In this case, a $\cG$-shtuka $\sP$ over $(\Sh_\eK(\eG,X)_E)^\lozenge$ is a functorial rule which assigns to each untilt $S^\sharp$ of $S$ and map $f: S^\sharp \to \Sh_\eK(\eG,X)_E^\ad$ a $\cG$-shtuka $(\sP_S,\phi_{\sP_S})$ over $S$ with one leg at $S^\sharp$ in the sense of \cite{SW2020}.

A collection of integral models $(\sS_{\eK_p\eK^p})_{\eK^p}$ as $\eK^p$ varies over neat compact open subgroups of $\eG(\bA_f^p)$ is \textit{canonical} in the sense of Pappas and Rapoport if 
\begin{enumerate}[(a)]
	\item for every discrete valuation ring $R$ of mixed characteristic $(0,p)$, 
	\begin{align*}
		\left(\varprojlim_{\eK^p} \Sh_\eK(\eG,X)_E\right)(R[1/p]) = \left(\varprojlim_{\eK^p} \sS_\eK\right)(R),
	\end{align*}
	\item $\sP_{\eK,E}$ extends to a shtuka $\sP_\eK$ over $\sS_\eK$ for every $\eK$, and 
	\item for every point $x \in \sS_\eK(k)$, there exists an isomorphism
	\begin{align}\label{eq-isoms}
		\Theta_x: \widehat{\cM^\textup{int}_{\cG,b_x,\mu}}_{/x_0} \xrightarrow{\sim} 	(\widehat{\sS_\eK}_{/x})^\lozenge,
	\end{align}
	where $\cM^\textup{int}_{\cG,b_x,\mu}$ denotes the corresponding integral local Shimura variety, such that $\sP_\eK$ pulls back along $\Theta_x$ to the universal shtuka on $\widehat{\cM^\textup{int}_{\cG,b_x,\mu}}_{/x_0}$ given by its definition as a moduli space of shtukas.
\end{enumerate}
See Section \ref{section-conj} for a detailed explanation of these conditions and of the notation above. Pappas and Rapoport show that a system of normal and flat integral models with finite \'etale transition maps satisfying these conditions is uniquely determined, following a similar argument in Pappas's earlier work \cite{Pappas2022}.

Our focus in this paper is on the toral case, so suppose $\eG = \eT$ is a torus, let $T = \eT_{\bQ_p}$, and let $\eK_p = \cT(\bZ_p)$ be the unique parahoric subgroup of $T(\bQ_p)$ with corresponding $\bZ_p$-model $\cT$. In this case, the corresponding Shimura varieties are zero dimensional, that is 
\begin{align}\label{eq-canonicalmodel}
	\Sh_\eK(\eT,X)_E = \coprod \Spec(E_i),
\end{align}
where each $E_i$ is a finite extension of the local reflex field $E$. Then $\Sh_\eK(\eT,X)_E$ has an obvious integral model over $\cO_E$,
\begin{align*}
	\sS_\eK = \coprod \Spec(\cO_{E_i}),
\end{align*}
where $\cO_{E_i}$ is the ring of integers in $E_i$. The following is Theorem \ref{mainthm} below.
%In this case, the given Shimura varieties are very simple, and so are their integral models. Surprisingly, it is non-trivial to show the above conditions

\begin{introthm}\label{thm-A}
	The collection of integral models $(\sS_\eK)_{\eK^p}$ defined above is canonical in the sense of Pappas and Rapoport.
\end{introthm}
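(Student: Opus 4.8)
The plan is to verify the three conditions (a), (b), (c) in the definition of \emph{canonical} for the models $\sS_\eK = \coprod_i \Spec\cO_{E_i}$, working one component at a time. Condition (a) should be essentially formal: if $R$ is a discrete valuation ring of mixed characteristic $(0,p)$ and $F = R[1/p]$ is its fraction field, then a map $\Spec F \to \coprod_i \Spec E_i$ over $\Spec E$ is the same as a factorization $E \to E_i \to F$ for some $i$, and since $\cO_{E_i}$ is the integral closure of $\cO_E$ in $E_i$ while $R$ is integrally closed in $F$, such a map carries $\cO_{E_i}$ into $R$; conversely any $\cO_E$-algebra map $\cO_{E_i}\to R$ extends uniquely to $E_i \to F$. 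I would check that these constructions are mutually inverse and compatible with the transition maps as $\eK^p$ shrinks, and then pass to the inverse limit.

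The heart of the matter is condition (b), and this is where the fully faithful functor from $\cG$-valued crystalline representations to $\cG$-shtukas over $\Spd\cO_K$ enters. Over each component $\Spec E_i$, the pro-\'etale $\cT(\bZ_p)$-local system underlying \eqref{eq-prosystem} is the same datum as a continuous homomorphism $\rho_i : \Gal(\bar E_i/E_i) \to \cT(\bZ_p)$. I would first identify $\rho_i$ using the reciprocity law that describes the canonical model of a Shimura variety attached to a torus: it is the homomorphism obtained, via the inverse of the local Artin map of $E_i$, from the local reflex norm $N_\mu = N_{E_i/\bQ_p}\circ\Res_{E_i/\bQ_p}(\mu)$ of the Shimura cocharacter $\mu$ (base changed along $E \hookrightarrow E_i$), with its natural integral refinement landing in $\cT(\bZ_p)$. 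Next I would show that $\rho_i$ is crystalline as a $\cT$-valued representation: composing with a faithful representation of $\cT$ and passing to a splitting field of $T$ reduces this to the crystallinity of the characters of the absolute Galois group cut out via local class field theory by algebraic characters of $E_i^\times$, which is classical --- such characters occur in the Tate modules of the formal $\cO_{E_i}$-modules of Lubin--Tate theory, which have good reduction over $\cO_{E_i}$. Applying the functor with $\cG = \cT$ and $K = E_i$ then produces a $\cT$-shtuka over $\Spd\cO_{E_i}$; because the functor is compatible with restriction to the generic fibre, where it recovers the pro-\'etale local system, this shtuka restricts to $\sP_{\eK,E}$ on $\Spec E_i$, and assembling over $i$ gives the extension $\sP_\eK$ over $\sS_\eK$.

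For condition (c), fix $x \in \sS_\eK(k)$ lying in the component $\Spec\cO_{E_i}$; then $\widehat{\sS_\eK}_{/x}$ is $\Spf\cO_L$ for a finite extension $L$ of $\breve{E}$ with residue field $k$, namely a factor of the completion of $\cO_{E_i}\otimes_{\cO_E}\cO_{\breve{E}}$ at $x$. The Frobenius $b_x \in T(\breve{\bQ}_p)$ of $\sP_\eK$ at $x$ is, since $T$ is commutative, determined by $\mu$ and $x$; and because the affine Grassmannian $\Gr_{\cT,\mu}$ of a torus is a single point, the integral local Shimura variety $\cM^\textup{int}_{\cT,b_x,\mu}$ admits an explicit description, and I would verify directly that $\widehat{\cM^\textup{int}_{\cT,b_x,\mu}}_{/x_0} \cong \Spd\cO_L$ with universal shtuka matching the shtuka attached to $\rho_i$; this produces the isomorphism $\Theta_x$ of \eqref{eq-isoms} together with the required compatibility of shtukas. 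I expect the principal difficulty to lie inside condition (b): pinning down precisely that $\rho_i$ is the reflex-norm character, with the correct normalization of the local Artin map and the correct integral structure, and that it is crystalline in the refined $\cG$-sense demanded by the functor; combined with the explicit description for a torus of the integral local Shimura variety and of its universal shtuka needed in (c), these are the steps that require genuine care, though all should go through once the framework of \cite{PR2021} and the crystalline-to-shtuka functor are in hand.
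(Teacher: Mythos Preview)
Your overall structure matches the paper's, but there are two genuine gaps, both in part (c).

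First, you write $\widehat{\sS_\eK}_{/x} \cong \Spf\cO_L$ for some finite extension $L/\breve{E}$ and then assert $\widehat{\cM^\textup{int}_{\cT,b_x,\mu}}_{/x_0} \cong \Spd\cO_L$. But for a torus the formal completion of the integral local Shimura variety at the base point is always $\Spd(\cO_{\breve{E}})$ (the structure morphism is an isomorphism on formal completions, see (\ref{eq-trivtorus})), so for $\Theta_x$ to exist at all you must have $L = \breve{E}$, i.e.\ each $E_i$ must be \emph{unramified} over $E$. This is not automatic and the paper proves it separately (Lemma~\ref{lem-fmlcomp}): one checks that the reflex norm $r(\mu)^{\textup{alg}}$, being a morphism of $\bZ_p$-group schemes $\Res_{\cO_E/\bZ_p}\bG_m \to \cT$, carries $\cO_E^\times$ into $\cT(\bZ_p) \subset \eK$, hence into the kernel of the reciprocity map $r(\mu)_\eK$ cutting out $E_\eK$.

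Second, you locate the principal difficulty in (b), but in fact the serious work lies in (c), in identifying $\Theta_x^\ast(\sP_\eK)$ with the \emph{universal} shtuka on $\cM^\textup{int}_{\cT,b_x,\mu}$. That both live over $\Spd(\cO_{\breve{E}})$ does not make this comparison ``direct'': the universal shtuka is defined by the moduli problem, not by any crystalline representation, and there is no a priori mechanism relating it to the shtuka produced from $\rho_i$ via Theorem~\ref{thm-B}. The paper's argument (Proposition~\ref{prop-LT}) uses functoriality under $\cT_1 = \Res_{\cO_{E_\eK}/\bZ_p}\bG_m \to \cT$ to reduce to the Lubin--Tate datum $(T_1,b_1,\mu_1)$, invokes representability of $\cM^\textup{int}_{\cT_1,b_1,\mu_1}$ by the Lubin--Tate Rapoport--Zink space, identifies the universal $p$-divisible group via deformation theory as the Lubin--Tate formal group, and finally compares the shtuka attached to that $p$-divisible group with the shtuka attached to its (crystalline) Tate module through prismatic Dieudonn\'e theory. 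Without some such bridge between the moduli-theoretic and the representation-theoretic shtukas, your sketch of (c) does not close.
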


Surprisingly, although the Shimura varieties and their integral models are quite simple, the proof of Theorem \ref{thm-A} is not trivial. The key point is that $p$-adic shtukas over the $v$-sheaves associated to $\bZ_p$-schemes are complicated objects, and to define such an object requires some machinery. 

In the case of Shimura varieties of Hodge-type, the machinery is provided by the close connection between Shimura varieties of Hodge type and the Siegel moduli space, see \cite{PR2021}. In particular, the shtuka can roughly be constructed out of the Tate module of the pull-back of the universal abelian scheme over the Siegel space. In the toral case, we do not have access to any such moduli description, and instead, the machinery we need is provided by the following theorem (Theorem \ref{technicalthm} below), which is the key technical result of this work.

\begin{introthm}\label{thm-B}
	Let $K$ be a complete discretely valued extension of $\bQ_p$ with ring of integers $\cO_K$, and let $\cG$ be a parahoric group scheme over $\bZ_p$. Then there is a fully faithful functor
	\begin{align*}
		\left(
		\cG\textup{-valued crystalline representations of $\Gal(\bar{K}/K)$}
		\right)
		\to
		\left( \cG\textup{-shtukas over $\Spd(\cO_K)$}\right)
	\end{align*}
\end{introthm}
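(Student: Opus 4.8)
The plan is to reduce the statement, by a Tannakian argument, to the case $\cG = \GL_n$; then to construct the desired functor on the resulting category of coefficients using the Bhatt--Scholze theory of prismatic $F$-crystals; and finally to deduce full faithfulness from an extension property across a codimension-two locus.

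For the reduction: since $\cG$ is a parahoric group scheme over $\bZ_p$, both categories in the statement admit descriptions in terms of exact $\bZ_p$-linear tensor functors out of the category $\Rep_{\bZ_p}(\cG)$ of representations of $\cG$ on finite free $\bZ_p$-modules --- a $\cG$-valued crystalline representation of $\Gal(\bar{K}/K)$ being such a functor with values in the category $\Rep^{\crys}_{\bZ_p}(\Gal(\bar{K}/K))$ of $\bZ_p$-lattices in crystalline representations, and a $\cG$-shtuka over $\Spd(\cO_K)$ being such a functor with values in shtukas over $\Spd(\cO_K)$ (with one leg at the canonical untilt), in each case compatibly with the evident fibre functor; this is the Tannakian formalism for parahoric group schemes used in \cite{PR2021}. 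It thus suffices to construct an exact tensor functor
\begin{equation}\label{eq-Rfunctor}
	\mathsf{R}\colon \Rep^{\crys}_{\bZ_p}(\Gal(\bar{K}/K)) \longrightarrow \big(\text{shtukas over }\Spd(\cO_K)\big)
\end{equation}
compatible with fibre functors, and to show that $\mathsf{R}$ is fully faithful: post-composition with $\mathsf{R}$ then carries $\cG$-crystalline representations to $\cG$-shtukas, and the induced functor is automatically fully faithful, since a morphism of tensor functors is determined by its components and its naturality and tensor-compatibility are reflected by the faithful functor $\mathsf{R}$.

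To construct $\mathsf{R}$, recall that the Bhatt--Scholze theorem identifies the source of \eqref{eq-Rfunctor} with the category of prismatic $F$-crystals in finite locally free modules on the absolute prismatic site $(\cO_K)_{\Prism}$, as exact tensor categories. A prismatic $F$-crystal $\cF$ then produces a shtuka over $\Spd(\cO_K)$: either abstractly, by restricting $\cF$ to the analytic part of the prismatic site and using the comparison between analytic prismatic $F$-crystals on $\cO_K$ and shtukas over $\Spd(\cO_K)$; or concretely, by evaluating $\cF$ on the Breuil--Kisin prism $(\fS,(E))$ attached to a uniformizer --- with $\fS = W(k_K)[[u]]$, $k_K$ the residue field of $K$, $E$ the corresponding Eisenstein polynomial --- and on the perfectoid prism $\big(A_{\mathrm{inf}}(\cO_C),\ker\theta\big)$ over the completed algebraic closure $C$ of $K$, to obtain a Breuil--Kisin module $(\fM,\varphi_{\fM})$ with a compatible semilinear $\Gal(\bar{K}/K)$-action on its base change, and feeding this into the Scholze--Weinstein construction of \cite{SW2020}, whose output is a shtuka over $\Spd(\cO_K)$ with one leg at the canonical untilt, bounded by the Hodge--Tate cocharacter of the underlying representation. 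The crystal property makes this independent of the chosen uniformizer and functorial in $\cF$, and exactness and compatibility with tensor products and duals are inherited from evaluation on prisms and from this comparison.

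For full faithfulness: under the identification of $\mathsf{R}(\cF)$ with the analytic prismatic $F$-crystal underlying $\cF$, morphisms $\mathsf{R}(\cF_1)\to\mathsf{R}(\cF_2)$ are precisely the Frobenius-equivariant morphisms of the associated vector bundles on the analytic locus of $\Spa\,\fS$, i.e.\ away from the closed point cut out by $(p,u)$, and each such morphism extends uniquely across that point to a morphism of honest prismatic $F$-crystals. This is the codimension-two (local cohomology / reflexivity) extension property for the regular two-dimensional local ring $\fS$ that already underlies the Bhatt--Scholze comparison, which I will invoke rather than reprove; a consistency check is that restriction to the generic fibre $\Spd(K)$ recovers the crystalline Galois representation by Scholze's theory, so these morphisms agree with morphisms of Galois representations. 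The principal difficulty is not any one of these inputs in isolation but their interface: making precise, and compatible with tensor products, duals, and the group action, the dictionary between the Pappas--Rapoport notion of a ($\cG$-)shtuka over the $v$-sheaf $\Spd(\cO_K)$ --- with its canonical leg and boundedness data --- and the analytic-prismatic / Breuil--Kisin side; and, on the group-theoretic side, verifying the Tannakian formalism of the first step for a parahoric $\cG$ whose special fibre need not be reductive, so that the $\GL_n$-case indeed yields the general statement.
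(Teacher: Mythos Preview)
Your construction of the functor via Bhatt--Scholze and the Tannakian reduction to $\GL_n$ match the paper's approach. Two issues, though.

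On the Tannakian side, you need more than you acknowledge: for post-composition with $\mathsf{R}$ to produce a genuine $\cG$-shtuka, the resulting functor $\Rep_{\bZ_p}(\cG)\to\Vect(W(R^+))$ must be \emph{exact} at every perfectoid point, not merely tensor-compatible, since $\cG$-torsors correspond to exact tensor functors. For parahoric $\cG$ with non-reductive special fibre this is not automatic; the paper (Proposition~\ref{prop-exacttensor}) proves it by first establishing exactness at the Breuil--Kisin prism $\fS$ --- using Ansch\"utz's triviality of $\cG$-torsors on the punctured $\Spec\fS$ together with the known exactness of Kisin's functor $\fM$ --- and then propagating to arbitrary prisms by faithfully flat descent. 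You flag this issue at the end but do not indicate how to resolve it.

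For full faithfulness, your argument has a genuine gap. You assert that morphisms of shtukas over $\Spd(\cO_K)$ coincide with Frobenius-equivariant morphisms over the analytic locus of $\Spa\,\fS$, and then invoke codimension-two extension. But a shtuka over $\Spd(\cO_K)$ is a compatible family over all perfectoid points of the $v$-sheaf, and identifying its morphisms with morphisms over the single Breuil--Kisin prism (or its analytic locus) is not a formality: the ``comparison between analytic prismatic $F$-crystals on $\cO_K$ and shtukas over $\Spd(\cO_K)$'' you invoke is not available as a black box in the cited literature, and once granted it implies the theorem immediately --- so you are assuming something at least as hard as what you set out to prove. The paper instead takes an indirect route (Theorem~\ref{thm-fullyfaithful}): it shows that the composite
\[
\Rep_{\bZ_p}^\crys(\Gamma_K)\to\Sht(\Spd(\cO_K))\to\Rep_{\bZ_p}(\Gamma_{K_\infty})
\]
agrees with restriction of Galois representations along $\Gamma_{K_\infty}\subset\Gamma_K$ (Lemma~\ref{lem-inclusion}), which is fully faithful by Kisin \cite[Cor.~2.1.14]{Kisin2006}; it then checks separately that $\Sht(\Spd(\cO_K))\to\Rep_{\bZ_p}(\Gamma_{K_\infty})$ is faithful, by factoring through $\Spd(\cO_C)$ and applying \cite[Thm.~2.7.6]{PR2021} and Fargues's theorem. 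This decomposition sidesteps any direct comparison between shtukas and analytic prismatic $F$-crystals.
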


Before we explain what goes into the proof of Theorem \ref{thm-B}, let us explain how it can be used to construct the required shtuka. To construct a shtuka over $(\sS_\eK)^\lozenge$, it is enough to do so over $\Spd(\cO_{E_i})$ for each $i$. But in our case, the $\cT$-valued representation of $\Gamma_{E_i}$ obtained by from the pro-\'etale $\cT(\bZ_p)$-local system (\ref{eq-prosystem}) is crystalline, by class field theory and the definition of the canonical model (\ref{eq-canonicalmodel}) over $E$, so Theorem \ref{thm-B} provides us with such a shtuka. %One then must check that this construction recovers that of Pappas and Rapoport in the generic fiber, see Lemma \ref{lem-genericfiber}.

Theorem \ref{thm-B} itself has its roots in the Bhatt-Scholze theory of prismatic $F$-crystals \cite{BS21}. The main theorem of \cite{BS21} states that, for $K$ as in Theorem \ref{thm-B}, there is an equivalence of categories
\begin{align}\label{eq-BS}
	\left(
	\begin{array}{c}
	\textup{prismatic $F$-crystals} \\ \textup{over $\Spf(\cO_K)_\Prism$}
	\end{array}
	\right) 
	\xrightarrow{\sim} 
	\left(
	\begin{array}{c}
	\textup{$\bZ_p$-lattices in crystalline} \\ \textup{representations of $\Gal(\bar{K}/K)$} 
	\end{array}
	\right).
\end{align}
%explain something about what prismatic $F$-crystals are
One can obtain a shtuka over $\Spd(\cO_K)$ from a prismatic $F$-crystal over $\Spf(\cO_K)_\Prism$; this process is outlined in \cite[\textsection 4.4]{PR2021}. Our main innovations in this direction are twofold. First of all, we show that the resulting functor from prismatic $F$-crystals to shtukas is fully faithful, which provides a link between the two theories which may be interesting beyond the scope of this paper, see Theorem \ref{thm-fullyfaithful} and Corollary \ref{cor-fullyfaithful}. Second, we provide the group theoretic analog stated in Theorem \ref{thm-B}, see Proposition \ref{prop-exacttensor} and Theorem \ref{technicalthm}. 

We note that the existence of the isomorphisms (\ref{eq-isoms}) follows immediately from the theory in \cite{PR2022}, so the difficult part is showing that the shtukas pull back in the desired manner. For this we exploit the functoriality in the Shimura datum to reduce to the case of the Lubin-Tate tower, where the result essentially follows from the theory of Lubin-Tate formal groups, see Section \ref{sub-proof}.

In fact, our construction establishes the existence of a ``prismatic realization functor'' in the toral case (using the terminology of \cite{IKY}). This establishes the conjecture prismatic refinement of the universal $\cG$-shtuka as described in \cite[\textsection 4.4]{PR2021}. We remark that, subsequent to the appearance of a first draft of this work, a prismatic realization functor was constructed in \cite{IKY} for all abelian-type Shimura varieties with hyperspecial level structure. This is used to prove the Pappas-Rapoport conjecture in those cases. 

Finally, let us briefly mention that while there is some overlap between the toral-type and Hodge-type cases, the two situations are quite different in general. In the cases of Shimura varieties which are of both Hodge and toral type, it follows a posteriori from Theorem \ref{thm-A} that the integral models described here agree with those defined in \cite{KP2018}, \cite{KZ21}, and \cite{PR2021}, since all of these models are canonical in the sense of Pappas and Rapoport.

%The outline of the paper is as follows. In Section \ref{section-prelim} we review prelimaries from $p$-adic geometry and the work of Pappas and Rapoport, with a particular focus on the theory of shtukas. Section \ref{section-prismatic} is the main technical section which provides the inputs necessary for the proof of Theorem \ref{thm-A}. There we briefly review the prismatic theory, provide the connection with the Pappas-Rapoport extension of the theory of shtukas, and prove Theorem \ref{thm-B}. In Section \ref{section-SV} we make some recollections from the theory of Shimura varieties, state the Pappas-Rapoport conjecture on the existence of canonical models, and prove Theorem \ref{thm-A}. 
%Say something about the local completion part

%Mention the abelian type case

\subsection{Acknowledgments} 
I thank Alexander Bertoloni Meli, Ian Gleason, George Pappas, and Alex Youcis for helpful correspondences and conversations related to this project. I thank also the anonymous referee for their careful reading of this manuscript and for their many suggestions for improving the content and exposition in this article.

\subsection{Notation and conventions}
\begin{itemize}[leftmargin=*]
	\item If $X$ is a scheme over $A$, and $B$ is an $A$-algebra, we abbreviate $X \times_{\Spec(A)} \Spec(B)$ by $X \otimes_A B$.
	\item If $F$ is a $p$-adic local field, denote by $F^\textup{un}$ the maximal unramified extension of $F$ inside of a fixed algebraic closure $\bar{F}$, and we denote by $\breve{F}$ the completion of $F^\textup{un}$.

	%\item Let $\bA_f$ denote the finite adeles over $\bQ$, and for any prime $p$ let $\bA_f^p$ denote the finite adeles away from $p$.
	
	 %In this situation we write $\sigma$ for the relative Frobenius in $\textup{Gal}(\breve{F}/F)$.
	 \item If $(R,R^+)$ is a Huber pair with $R^+ = R^\circ$, we often write $\Spa(R)$ instead of $\Spa(R,R^+)$.
	%\item Let $S$ be either a nonarchimedean local field $E$ or its ring of integers $\mathcal{O}_E$, and let $T$ be $\Spa(E)$, or $\Spa(\mathcal{O}_E)$, respectively. If $X$ is a scheme which is separated and of finite type over $S$, we denote by $X^\ad$ the adic space $X \times_{S} T$, as defined in \cite[Prop. 3.8]{Huber1994}.
	\item For any field $K$ with fixed separable closure $\bar{K}$, we write $\Gamma_K$ for the absolute Galois group $\Gal(\bar{K}/K)$. %When $K$ is a $p$-adic field, we write $\Gamma_{K,0}$ for the inertia subgroup of $\Gamma_K$.
	\item If $X$ is a scheme or adic space, we  denote by $\Vect(X)$ the category of vector bundles on $X$. If $X = \Spec(A)$ is an affine scheme we write $\Vect(A)$ instead of $\Vect(\Spec(A))$.
%	\item If $G$ is a flat affine group scheme over a ring $A$, we denote by $\Rep_A(G)$ the category of algebraic representations $\rho: G \to \GL(\Lambda)$ of $G$ on finite projective $A$-modules $\Lambda$. 
%	and by 
%	\begin{align*}
%		\mathbbm{1}_B: \Rep_A(G) \to \Vect(B) 
%	\end{align*}
%	the forgetful functor $(\Lambda,\rho)\mapsto \Lambda\otimes_A B$ for any $A$-algebra $B$. 
%	\item Denote by Perfd the category of perfectoid spaces, and Perf the subcategory of perfectoid spaces in characteristic $p$.
	\item For a perfect field $k$ in characteristic $p$, we denote by $\Perfk$ the category of perfectoid spaces over $k$.
%	\item Denote by $\Schpf_{\bF_p}$ the category of perfect schemes in characteristic $p$, and $\Aff\Schpf_{\bF_p}$ the category of perfect affine schemes in characteristic $p$.
%	\item Denote by $\textup{(Perf)}^\sim$ the category of small $v$-sheaves on Perf. 
%	\item Denote by $(\Schpf_{\bF_p})^\sim$ the category of small scheme-theoretic $v$-sheaves (see \cite[\textsection 1.3.1]{Gleason2020}).
%	\item Let $\mathcal{O}_E$ be the ring of integers in a discretly valued $p$-adic field $E$. Denote by $\fml_{\mathcal{O}_E}$ the category of normal formal schemes which are flat and locally formally of finite type over $\mathcal{O}_E$.
\end{itemize}

\section{Preliminaries} \label{section-prelim}

\subsection{Recollections on shtukas}
We begin by establishing notation from the theory of perfectoid spaces and $v$-sheaves. We refer the reader to \cite{SW2020} and \cite{Scholze2017} for comprehensive background. Fix a perfect field $k$ of characteristic $p$. 

In what follows we will work with schemes, formal schemes, and adic spaces defined over $\bZ_p$ and $\bQ_p$. We remark that all of the constructions below work when $\bZ_p$ is replaced by a complete discrete valuation ring with perfect residue field, and we will use these constructions freely in the text below. The main examples of interest will be the ring of integers in a finite extension of $\bQ_p$ or $\breve{\bQ}_p$.

We first recall the functor from \cite[\textsection 18]{SW2020} associating a $v$-sheaf to any adic space over $\Spa(\bZ_p)$. If $X$ is an adic space over $\Spa(\bZ_p)$, $X^\lozenge$ is the set-valued functor on $\Perfk$ given by 
\begin{align*}
	X^\lozenge(S) = \{(S^\sharp, f)\}/\textup{isom.}
\end{align*}
for any $S$ in $\Perfk$, where $S^\sharp$ is an untilt of $S$ and $f$ is a morphism of adic spaces $f: S^\sharp \to X$. By \cite[Lem. 18.1.1]{SW2020}, the functor $X^\lozenge$ defines a $v$-sheaf on $\Perfk$. If $X = \Spa(A, A^+)$ for a Huber pair $(A,A^+)$, we write $\Spd(A,A^+)$ for $X^\lozenge$, and when $A^+ = A^\circ$, we write $\Spd(A)$ instead of $\Spd(A,A^+)$. If $\mathfrak{X}$ is a formal scheme over $\Spf(\bZ_p)$ which is locally formally of finite type, then we have a corresponding adic space $\mathfrak{X}^\ad$ defined over $\Spa(\bZ_p)$, and we use the shorthand $\mathfrak{X}^\lozenge$ to mean $(\mathfrak{X}^\ad)^\lozenge$. 

If $\mathscr{X}$ is now a scheme over $\Spec(\bZ_p)$, we can associate to $\mathscr{X}$ a $v$-sheaf using two different methods. For this we follow \cite[\textsection 2.2]{AGLR}. First assume $\mathscr{X}= \Spec(A)$ is affine. Consider the following $v$-sheaves:
\begin{itemize}
	\item Let $\mathscr{X}^\diamond$ be the $v$-sheaf over $\Spd(\bZ_p)$ defined by assigning to $S = \Spa(R,R^+)$ the set of isomorphism classes of pairs $(S^\sharp, f)$, where $S^\sharp= \Spa(R^\sharp, R^{\sharp+})$ is an untilt of $S$ over $\bZ_p$ and $f: A \to R^{\sharp+}$ is a ring homomorphism.
	\item Let $\mathscr{X}^\lozenge$ be the $v$-sheaf over $\Spd(\bZ_p)$ defined by assigning to $S = \Spa(R,R^+)$ the set of isomorphism classes of pairs $(S^\sharp, f)$ where $S^\sharp = \Spa(R^\sharp, R^{\sharp+})$ is an untilt of $S$ over $\bZ_p$ and $f: A \to R^\sharp$ is a ring homomorphism.
\end{itemize}
We remark that in \cite{PR2021}, the notation $(-)^\blacklozenge$ is used in place of the small diamond symbol $(-)^\diamond$. Both $(-)^\lozenge$ and $(-)^\diamond$ glue to define functors from schemes over $\Spec(\bZ_p)$ to $v$-sheaves over $\Spd(\bZ_p)$. These functors are referred to as the ``small diamond'' and ``big diamond'' functors, respectively. For any scheme $\mathscr{X}$ over $\Spec(\bZ_p)$ there is a natural transformation
\begin{align}\label{eq-jX}
	j_\mathscr{X}: \mathscr{X}^\diamond \to \mathscr{X}^\lozenge.
\end{align} 
The morphism $j_\mathscr{X}$ is a monomorphism of $\mathscr{X}$ is separated over $\bZ_p$, an open immersion if $\mathscr{X}$ is separated and of finite type over $\bZ_p$, and an isomorphism if $\mathscr{X}$ is proper over $\bZ_p$. There are analogous constructions for schemes over $\bQ_p$, which we use freely in the text below.

When $\mathscr{X}$ is separated and of finite type over $\bZ_p$, we have alternative derivations for each of the two diamond functors by passing first to one of two adic spaces over $\Spa(\bZ_p)$ associated to $\mathscr{X}$, see \cite[Rem. 2.11]{AGLR}. 
\begin{itemize}
	\item Let $\widehat{\mathscr{X}}$ denote the $p$-adic completion of $\mathscr{X}$, which is a formal scheme over $\Spf(\bZ_p)$. Then there is a natural isomorphism 
	\begin{align*}
		\mathscr{X}^\diamond \xrightarrow{\sim} \widehat{\mathscr{X}}^\lozenge.
	\end{align*}
	\item Let $X^\ad$ denote the fiber product $X \times_{\Spec(\bZ_p)} \Spa(\bZ_p)$ in the sense of \cite[Prop. 3.8]{Huber1994}. Then there is a natural isomorphism 
	\begin{align*}
		\mathscr{X}^\lozenge \xrightarrow{\sim} (\mathscr{X}^\ad)^\lozenge.
	\end{align*}
\end{itemize}
For the benefit of the reader, let us spell out the construction $\mathscr{X}^\ad$ more explicitly. If $\mathscr{X} = \Spec(\bZ_p[x_1, \dots, x_n]/(f_1, \dots, f_k))$, then 
\begin{align*}
	\mathscr{X}^\ad = \varinjlim_r \Spa(\bZ_p\langle p^rx_1, \dots, p^rx_n\rangle / (f_1, \dots, f_k)).
\end{align*} 
This is in contrast to $\widehat{\mathscr{X}}^\ad$, which is given by
\begin{align*}
	\widehat{\mathscr{X}}^\ad = \Spa(\bZ_p\langle x_1, \dots, x_n\rangle / (f_1, \dots, f_k)).
\end{align*}
In general, there is a morphism
\begin{align}\label{eq-adimm}
	\widehat{\mathscr{X}}^\ad \to \mathscr{X}^\ad
\end{align}
which is an open embedding when $\mathscr{X}$ is separated and is an isomorphism if $\mathscr{X} \to \Spec(\bZ_p)$ is proper, see \cite[Rem. 4.6 (iv)]{Huber1994}. We can also obtain the natural transformation (\ref{eq-jX}) by applying the $(-)^\lozenge$-functor for adic spaces to (\ref{eq-adimm}). 

When $\mathscr{X}$ is separated and of finite type over $\bZ_p$, by applying base change along $\Spd(\bZ_p) \to \Spd(\bQ_p)$ to (\ref{eq-jX}), we obtain an open immersion 
\begin{align*}
	\mathscr{X}^\diamond\times_{\Spd(\bZ_p)} \Spd(\bQ_p) \hookrightarrow (\mathscr{X})^\lozenge \times_{\Spd(\bZ_p)} \Spd(\bQ_p) = (\mathscr{X}\times_{\Spec(\bZ_p)} \Spec(\bQ_p))^\lozenge,
\end{align*}
with the last equality following from the compatibility of the functor $(-)^\lozenge$ with products. Following \cite[Def. 2.1.9]{PR2021}, we can then define $\mathscr{X}^{\lozenge \slash}$ to be the coproduct
\begin{align}\label{eq-slashdef}
	\mathscr{X}^{\lozenge \slash} := \mathscr{X}^\diamond \sqcup_{(\mathscr{X}^\diamond\times_{\Spd(\bZ_p)} \Spd(\bQ_p))} (\mathscr{X} \times_{\Spec(\bZ_p)} \Spec(\bQ_p))^\lozenge
\end{align}
as a $v$-sheaf on $\Spd(\bZ_p)$. We remark that there is a natural map 
\begin{align}\label{eq-slash}
	\mathscr{X}^{\lozenge \slash} \to \mathscr{X}^\lozenge
\end{align}
factoring $\mathscr{X}^\diamond \to \mathscr{X}^\lozenge$, which also becomes an isomorphism when $\mathscr{X} \to \Spec(\bZ_p)$ is proper.

If $S$ is a perfectoid space over $k$, we write $S \bdtimes \Spa(\bZ_p)$ for the analytic adic space defined in \cite[Prop. 11.2.1]{SW2020}. By the proof of {\it loc. cit.}, $S \bdtimes \Spa(\bZ_p)$ is sousperfectoid, i.e., it is covered by rational open subsets $\Spa(R,R^+)$ where $R$ is sousperfectoid in the sense of \cite[Def. 6.3.1]{SW2020}. When $S = \Spa(R,R^+)$ is affinoid, $S \bdtimes \Spa(\bZ_p)$ is given by $S \bdtimes \Spa(\bZ_p)  = \Spa (W(R^+))\bs \{ [\varpi] = 0\}$, where $\varpi$ is a fixed pseudo-uniformizer in $R^+$ and $[\varpi]$ denotes the canonical lift $(\varpi, 0, 0, \dots) \in W(R^+)$. Define furthermore 
\begin{align*}
	\mathcal{Y}(R,R^+) = \Spa(W(R^+)) \bs \{[\varpi]=0, \, p=0\},
\end{align*}
which is also sousperfectoid \cite[Prop. 3.6]{Kedlaya2020}. For any $S = \Spa(R,R^+)$ in $\Perfk$, we have the function
\begin{align*}
	\kappa: | S \bdtimes \Spa(\bZ_p) | \to [0,\infty)
\end{align*}
such that $\kappa(x) = (\log| [\varpi](\tilde{x})|) / (\log | p(\tilde{x})|)$, where $\tilde{x}$ denotes the maximal generalization of $x \in |S \bdtimes \Spa(\bZ_p)|$, see \cite[Prop. II.1.16]{FS21}. For any interval $I = [a,b] \subset [0,\infty)$, where $a$ and $b$ are rational numbers, denote by $\mathcal{Y}_I(S)$ the open subset of $S\bdtimes \Spa(\bZ_p)$ corresponding to the interior of $\kappa^{-1}(I)$. The Frobenius homomorphism $W(R^+) \to W(R^+)$ induces morphisms
\begin{align*}
	\mathcal{Y}_{[r,\infty)}(S) \to \mathcal{Y}_{[pr,\infty)}(S) 
\end{align*}
which will be denoted $\Frob_S$ or, occasionally, $\phi$.

Let $S = \Spa(R, R^+)$ is in $\Perf_k$, and let $S^\sharp = \Spa(R^\sharp, R^{\sharp+})$ be an untilt over  $S$. Associated with $S^\sharp$ are the de Rham period rings $B_\dR^+(R^\sharp)$ and $B_\dR(R^\sharp)$.  Explicitly, if $\xi$ is a generator of the surjective homomorphism $\theta: W(R^+) \to R^{\sharp+}$ coming from the choice of $S^\sharp$, then $B_\dR^+(R^\sharp)$ is the $\xi$-adic completion of $W(R^+)[1/p]$ and $B_\dR(R^\sharp) = B_\dR^+(R^\sharp)[1/\xi]$. We note for future reference that $B_\dR^+(R^\sharp)$ is then the completion $\widehat{\cO}_{S \bdtimes \bZ_p, \ S^\sharp}$ of $S\bdtimes \bZ_p$ along the closed Cartier divisor of $S\bdtimes \bZ_p$ corresponding to the untilt $S^\sharp$ of $S$.

Now let $G$ be a reductive group scheme over $\bQ_p$, and suppose $\cG$ is a parahoric $\bZ_p$-model for $G$ in the sense of \cite{BTII}. Let us briefly review the definitions of $\cG$-shtukas, first over perfectoid spaces, and then over the $v$-sheaf associated to a locally Noetherian adic space over $\Spa(\bZ_p)$. Let us note that all notions in this section have an obvious vector bundle analog, which we use freely in the text below. We refer the reader \cite{SW2020} and \cite{PR2021} for details. 

Let $S = \Spa(R,R^+)$ be affinoid perfectoid space over $k$, and let $S^\sharp = \Spa(R^\sharp, R^{\sharp+})$ be an untilt of $S$ over $\bZ_p$. A \textit{$\cG$-shtuka over $S$ with one leg at $S^\sharp$} is a pair $(\sP, \phi_{\sP})$, where $\sP$ is a $\cG$-torsor over $S \bdtimes \Spa(\bZ_p)$, and $\phi_{\sP}$ is a $\cG$-torsor isomorphism
\begin{align*}
	\phi_\sP: \Frob_S^\ast(\sP)\res_{S \bdtimes \Spa(\bZ_p) \setminus S^\sharp} \xrightarrow{\sim} \sP\res_{S \bdtimes \Spa(\bZ_p) \setminus S^\sharp}
\end{align*}
which is meromorphic along the closed Cartier divisor $S^\sharp \subset S \bdtimes \Spa(\bZ_p)$ in the sense of \cite[Def. 5.3.5]{SW2020}.

If $\mu: \bG_{m,\bar{\bQ}_p} \to G_{\bar{\bQ}_p}$ is a $G(\bar{\bQ}_p)$-conjugacy class of cocharacters with field of definition $E$, then we say a $\cG$-shtuka $(\sP, \phi_{\sP})$ over $S$ with one leg at $S^\sharp$ is \textit{bounded by $\mu$} if the relative position of $\Frob^\ast(\sP)$ and $\sP$ at $S^\sharp$ with respect to $\phi_\sP$ is bounded by the $v$-sheaf local model $\bM^v_{\cG,\mu}$. See \cite[\textsection 2.3.4]{PR2021} for a detailed explanation of this condition. 

We will denote the pair $(\sP, \phi_\sP)$ simply by $\sP$ when it is clear that we are speaking of the $\cG$-shtuka and not just the $\cG$-torsor. Since untilts $S^\sharp$ of $S$ correspond to morphisms $S \to \Spd(\bZ_p)$, we occasionally refer to a $\cG$-shtuka with one leg at $S^\sharp$ as a \textit{$\cG$-shtuka over $S/\,\Spd(\bZ_p)$}, if the morphism $S \to \Spd(\bZ_p)$ is understood. By \cite[Prop. 19.5.3]{SW2020}, the notion of a $\cG$-shtuka can be defined over a (not necessarily affinoid) perfectoid space $S$ equipped with a morphism to $\Spd(\bZ_p)$, and $\cG$-shtukas over $S /\, \Spd(\bZ_p)$ form a stack for the $v$-topology.

Let $R$ be an integral perfectoid ring in the sense of \cite[Def. 17.5.1]{SW2020}, and let $\xi$ be a generator of $\ker(\theta: W(R^\flat) \to R)$. A $\cG$\textit{-Breuil-Kisin-Fargues-module} over $R$ is a pair $(\sP, \phi_{\sP})$ consisting of a $\cG$-torsor $\sP$ over $\Spec(W(R^\flat))$ together with an isomorphism
\begin{align*}
	\phi_{\sP}: \phi^\ast \sP[1/\xi] \xrightarrow{\sim} \sP[1/\xi].
\end{align*}
Hereafter we will refer to these as $\cG$-BKF-modules. Following \cite{PR2021}, if $S = \Spa(R,R^+)$ is perfectoid with untilt $S^\sharp = \Spa(R^\sharp, R^{\sharp+})$, then we define a $\cG$-BKF-module over $S$ with one leg at $S^\sharp$ to be a $\cG$-BKF-module over $R^{\sharp+}$. We note that $R^{\sharp+}$ is integral perfectoid in this case, see \cite[Rmk 2.2.3 (i)]{PR2021}. 

If $S = \Spa(R,R^+)$ is perfectoid with untilt $S^\sharp = \Spa(R^\sharp, R^{\sharp+})$, pulling back along the morphism $\cY_{[0,\infty)}(S) \to \Spec(W(R^+))$ of locally ringed spaces determines a functor 
\begin{align*}
	(\cG\textup{-BKF-modules over }S\textup{ with one leg at $S^\sharp$}) \to (\cG\textup{-shtukas over $S$ with one leg at $S^\sharp$}).
\end{align*}
We say a $\cG$-BKF-module over $S$ with one leg at $S^\sharp$ is bounded by $\mu$ if the corresponding $\cG$-shtuka is bounded by $\mu$.

Let $\mathcal{F}$ be a $v$-sheaf over $\Spd(\bZ_p)$. We close this section by defining shtukas over $\mathcal{F}$, following \cite[\textsection 2.3]{PR2021}. Consider the slice category $\Perf_{\bZ_p} / \mathcal{F}$ consisting of perfectoid spaces $S$ over $\Spd(\bZ_p)$ together with a map $S \to \mathcal{F}$ over $\Spd(\bZ_p)$. Then there is a category $\Sht_\cG$ fibered over $\Perf_{\bZ_p} / \mathcal{F}$ whose fiber over $S \to \mathcal{F}$ is the category of $\cG$-shtukas over $S$ with one leg at the untilt $S^\sharp$ corresponding to $S \to \Spd(\bZ_p)$. A \textit{$\cG$-shtuka} over $\mathcal{F} / \Spd(\bZ_p)$ is defined to be a Cartesian section of $\Sht_\cG \to \Perf_{\bZ_p} / \mathcal{F}$  (in the sense of \cite[Tag 07IV]{stacks-project}).

In other words, a $\cG$-shtuka over $\mathcal{F} / \Spd(\bZ_p)$ is a compatible collection of functors
\begin{align}\label{eq-collection}
	\mathcal{F}(S) \to (\cG\textup{-shtukas over }S), \ (S^\sharp, f) \mapsto (\sP_S, \phi_{\sP_S})
\end{align}
for every $S$ in $\Perfk$ such that $(\sP, \phi_{\sP_S})$ has one leg at $S^\sharp$.  Here we view $\mathcal{F}(S)$ as a category whose only morphisms are the identity morphisms. To make the compatibility condition explicit, suppose $\sP$ is a collection of functors as in (\ref{eq-collection}). Let $a: T \to S$ be a morphism in $\Perfk$, and let $(S^\sharp,f)$ be an untilt of $S$ over $\Spa(\cO_{\breve{E}})$. By the tilting equivalence, $S^\sharp$ determines an untilt $(T^\sharp,g)$ of $T$ and a morphism $a^\sharp: T^\sharp \to S^\sharp$. Then $\sP$ associates to $(S^\sharp, f)$ a $\cG$-shtuka $(\sP_S, \phi_{\sP_S})$ over $S$ with one leg at $S^\sharp$ and to $(T^\sharp, g)$ a $\cG$-shtuka $(\sP_T, \phi_{\sP_T})$ over $T$ with one leg at $T^\sharp$. The collection of functors $\sP$ is compatible if, for every $T$ and $S$ as above, there is a natural isomorphism $a^\ast(\sP_S, \phi_{\sP_S}) \xrightarrow{\sim} (\sP_T, \phi_{\sP_T})$ of $\cG$-shtukas over $T$ with one leg at $T^\sharp$. Moreover, this collection of isomorphisms should satisfy the obvious cocycle condition.

We denote by $\Sht_\cG(\mathcal{F} / \Spd(\bZ_p))$ the category of $\cG$-shtukas over $\mathcal{F} / \Spd(\bZ_p)$, and by $\Sht_{\cG,\mu}(\mathcal{F} / \Spd(\bZ_p))$ the full subcategory of those bounded by $\mu$, that is those collections $\sP$ such that $(\sP, \phi_\sP)$ is bounded by $\mu$ for every $S \in \Perf_k$. If the map $\mathcal{F} \to \Spd(\bZ_p)$ is clear from context, we write simply $\Sht_\cG(\mathcal{F})$. Moreover, we have the analogous category of vector bundle shtukas over $\mathcal{F} / \Spd(\bZ_p)$, which we denote by $\Sht(\mathcal{F} / \Spd(\bZ_p))$, or once again simply $\Sht(\mathcal{F})$ if $\mathcal{F} \to \Spd(\bZ_p)$ is understood. Henceforth, we will usually refer to vector bundle shtukas simply as ``shtukas''.

We close this section with a lemma which helps us determine when a $\cG$-shtuka is bounded by a minuscule cocharacter $\mu$. For any small $v$-sheaf $\mathcal{F}$, let $|\mathcal{F}|$ denote the underlying topological space of $\mathcal{F}$ as in \cite[Prop. 12.7]{Scholze2017}. If $\mathcal{F}$ is defined over $\Spd(\bZ_p)$ then, following \cite[\textsection 3.4]{PR2021}, we say $\mathcal{F}$ is \textit{topologically flat} if the topological space of its generic fiber $\mathcal{F}_\eta: = \mathcal{F}\times_{\Spd(\bZ_p)} \Spd(\bQ_p)$ is dense in $|\mathcal{F}|$. 

\begin{lemma}\label{lem-topflat}
	Let $\mu: \mathbb{G}_{m,E} \to G_E$ be a minuscule cocharacter defined over $E$. Let $\mathcal{F}$ be a small $v$-sheaf over $\Spd(\cO_E)$, and let $\sP$ be a $\cG$-shtuka on $\mathcal{F}$. If $\mathcal{F}$ is topologically flat, then $\sP$ is bounded by $\mu$ if and only if its restriction to $\mathcal{F}_\eta$ is bounded by $\mu$.
\end{lemma}
\begin{proof}
	Let $\sP_\eta$ denote the pullback of $\sP$ along $\Spd(E) \to \Spd(\cO_E)$. Then $\sP_\eta$ is certainly bounded by $\mu$ if $\sP$ is, so it remains to show the converse. We will do this by proving that ``boundedness by $\mu$'' is a closed condition. 
	
	Let $\Perfk / \Spd(\bZ_p)$ denote the category of perfectoid spaces over $k$ equipped with a morphism to $\Spd(\bZ_p)$ (i.e., equipped with an untilt), and let $\operatorname{Gr}_{\cG}$ denote the Beilinson-Drinfeld Grassmannian over $\Spd(\bZ_p)$. If $S = \Spa(R,R^+)$ is in $\Perfk / \Spd(\bZ_p)$, then $\operatorname{Gr}_\cG(S)$ parametrizes $\cG$ torsors over $\Spec(B_\dR^+(R^\sharp))$ along with a trivilization over $\Spec(B_\dR(R^\sharp))$ \cite[Prop. 20.3.2]{SW2020}. 
	
%	 Denote by $L^+\cG$ and $L\cG$ the group functors on $\Perf_k / \Spd(\bZ_p)$  defined by 
%	\begin{align*}
%		L^+\cG(R,R^+) = \cG(B_\dR^+(R^\sharp)) \ \text{ and } \ L\cG(R,R^+) = \cG(B_\dR(R^\sharp)).
%	\end{align*}
%	Then by \cite[Prop. 20.3.2]{SW2020}, $\operatorname{Gr}_\cG$ is isomorphic to the \'etale sheafification of the functor 
%	\begin{align*}
%		(R, R^+) \mapsto L\cG(R, R^+) / L^+\cG(R,R^+).
%	\end{align*} 
	
	We define the integral local Hecke stack $\operatorname{Hk}_\cG$ to be the stack on $\Perfk / \Spd(\bZ_p)$ which assigns to a perfectoid space $S$ in $\Perfk/ \Spd(\bZ_p)$ the isomorphism classes of tuples $(\cE_1, \cE_2, \alpha)$, where $\cE_1$ and $\cE_2$ are $\cG$-torsors on $\Spec(B_\dR^+(R^\sharp))$, and $\alpha$ is an isomorphism
	\begin{align*}
		\alpha: \cE_1 [1/\xi] \xrightarrow{\sim} \cE_2 [1/\xi].
	\end{align*}
	It follows from \cite[Prop. 20.3.2]{SW2020} that $\operatorname{Hk}_\cG(S)$ is the quotient $L^+\cG \bs \operatorname{Gr}_\cG$ for the \'etale topology, where $L^+\cG$ is the group functor $(R,R^+) \mapsto \cG(B_\dR^+(R^\sharp))$ on $\Perfk / \Spd(\bZ_p)$.
	
	Recall the $v$-sheaf local model $\bM_{\cG,\mu}^v$ \cite[\textsection 21.4]{SW2020}, which is the closure for the $v$-topology of the diamond associated to the flag variety $\mathscr{F}\ell_{G,\mu}$ of parabolic subgroups of $G$ of type $\mu$ inside of $\operatorname{Gr}_{\cG}$. Analogously, we denote by $\operatorname{Hk}_{\cG,\mu}$ the (left) quotient of $\bM^v_{\cG,\mu}$ by $L^+\cG$. Since $\bM^v_{\cG,\mu} \to \operatorname{Gr}_\cG$ is a closed immersion and $\operatorname{Gr}_\cG \to \operatorname{Hk}_\cG$ is a $v$-cover, it follows from \cite[Prop. 10.11 (i)]{Scholze2017} that $\operatorname{Hk}_{\cG,\mu} \to \operatorname{Hk}_\cG$ is a closed immersion as well. 
	
	Associated to $\sP$ we have a morphism $\mathcal{F} \to \operatorname{Hk}_\cG$ of $v$-sheaves over $\Spd(\bZ_p)$. Indeed, given a morphism $S = \Spa(R,R^+) \to \mathcal{F}$ over $\Spd(\bZ_p)$, we obtain a $\cG$-shtuka $\sP_S$ over $S$ with one leg at $S^\sharp$. By restricting both $\sP_S$ and $\Frob_S^\ast \sP_S$ to the completion of $S \bdtimes \bZ_p$ at $S^\sharp$, we obtain two $\cG$-torsors on $\Spec(B_\dR^+(R^\sharp))$ with an isomorphism between their restrictions to $\Spec(B_\dR(R^\sharp))$, i.e., an object in $\operatorname{Hk}_\cG$. 
	
	Define $\mathcal{F}_\mu = \operatorname{Hk}_{\cG,\mu} \times_{\operatorname{Hk}_\cG} \mathcal{F}$. Since $\operatorname{Hk}_{\cG,\mu} \to \operatorname{Hk}_\cG$ is a closed immersion, the same is true of $\mathcal{F}_\mu \to \mathcal{F}$. Moreover, $\mathcal{F}_\mu(S)$ describes maps $S \to \mathcal{F}$ for which the corresponding shtuka $\sP_S$ is bounded by $\mu$. By assumption $\mathcal{F}_\eta \subset \mathcal{F}_\mu$, so $|\mathcal{F}_\mu| = |\mathcal{F}|$ by topological flatness of $\mathcal{F}$. Moreover, $\mathcal{F}_\mu \to \mathcal{F}$ is quasicompact, since it is a closed immersion (see e.g., \cite[Rmk.  18.2]{Scholze2017}). Hence by \cite[Lem. 12.11]{Scholze2017}, $\mathcal{F}_\mu \to \mathcal{F}$ is a surjective morphism of $v$-stacks, and is therefore an isomorphism. It follows that $\sP$ is bounded by $\mu$ on all of $\mathcal{F}$.
	
\end{proof}

\subsection{Shtukas and local systems}
In this section we discuss the connection between shtukas and pro-\'etale $\bZ_p$-local systems as in \cite[\textsection 2]{PR2021}. Let $S$ be a perfectoid space, and let $S_{\textup{pro\'et}}$ denote the pro-\'etale topology for $S$ as in \cite[Def. 8.2.6]{SW2020}. If $H$ is a topological group, denote by $\ul{H}$ the sheaf for the pro-\'etale topology on $\Perfk$ defined by
\begin{align}\label{eq-constantsheaf}
	\ul{H}(S) = C^0(|S|, H),
\end{align}
where $|S|$ is the topological space underlying $S$ and $C^0(|S|,H)$ denotes the set of continuous functions $|S| \to H$. In particular, for any finite free $\bZ_p$-module $M$, we have an associated pro-\'etale sheaf $\ul{M}$ on $\Perfk$. A \textit{pro-\'etale $\bZ_p$-local system on $S$} is a sheaf $\bL$ of $\ul{\bZ_p}$-modules on $S_{\textup{pro\'et}}$ such that $\bL$ is locally isomorphic to $\ul{M}$ for some finite free $\bZ_p$-module $M$. 

Suppose now $X$ is a locally Noetherian adic space over $\bZ_p$, and write $X_{\textup{pro\'et}}$ for the pro-\'etale site of $X$ as in \cite[\textsection 5.1]{BMS1}. We have also a notion of pro-\'etale $\bZ_p$-local system on $X$. Denote by $\ul{\bZ_p} $ the inverse limit $\varprojlim \bZ_/p^n\bZ$ as sheaves on $X_{\textup{pro\'et}}$. Then a \textit{pro-\'etale $\bZ_p$-local system on $X$} is a sheaf $\bL$ of $\ul{\bZ_p}$-modules on $X_{\textup{pro\'et}}$ such $\bL$ is locally isomorphic to $\ul{\bZ_p} \otimes M$ for some finitely generated $\bZ_p$-module $M$. By \cite[Prop. 8.2]{Scholze2013}, this is equivalent to the standard notion of a lisse $\bZ_p$-sheaf on the \'etale site $X_{\textup{\'et}}$ of $X$. We will denote by $\Loc_{\bZ_p}(X)$ the resulting category of pro-\'etale $\bZ_p$-local systems on $X$. 

\begin{rmk}\label{rmk-proetaleLS}
	Using \cite[Def. 9.1.4]{KL} we may extend the definition of the pro-\'etale site for locally Noetherian adic spaces given in \cite[\textsection 5.1]{BMS1} to arbitrary preadic spaces (in the terminology of \cite{KL}), and in particular to perfectoid spaces. If $S$ is a perfectoid space, the resulting pro-\'etale site differs from the site $S_{\textup{pro\'et}}$ defined in \cite{SW2020}, but the two categories of $\bZ_p$-local systems defined using the different sites are equivalent. Indeed, this follows from the fact that descent data for finite \'etale morphisms over $S$ are effective in both pro-\'etale topologies. In particular, given a locally Noetherian adic space $X$ over $\bZ_p$, a pro-\'etale $\bZ_p$-local system $\bL$ on $X$, and a morphism $S \to X$ from a perfectoid space $S$ to $X$, we may pull back $\bL$ to a $\bZ_p$-local system on $S$ in the sense described above. 
\end{rmk}

Following \cite{PR2021}, we define a \textit{pro-\'etale $\bZ_p$-local system} on $X^\lozenge$ as a compatible system of functors
\begin{align*}
	X^\lozenge(S) \to \Loc_{\bZ_p}(S), \ (S^\sharp, f) \mapsto \bL_S
\end{align*}
for every $S$ in $\Perfk$. We denote the resulting category of pro-\'etale $\bZ_p$-local systems on $X^\lozenge$ by $\Loc_{\bZ_p}(X^\lozenge)$. Let $\bL$ be a pro-\'etale $\bZ_p$-local system on $X$, and let $S$ be a perfectoid space in $\Perfk$ with untilt $S^\sharp$. Then any morphism of adic spaces $S^\sharp \to X$ allows us to define a pro-\'etale $\bZ_p$-local system on $S$ by first pulling back $\bL$ to $S^\sharp$ (see Remark \ref{rmk-proetaleLS}) and then applying the tilting equivalence. This defines a functor
\begin{align} \label{eq-locsys}
	\Loc_{\bZ_p}(X) \to \Loc_{\bZ_p}(X^\lozenge).
\end{align}

\begin{lemma}\label{lem-locsys}
	If $X$ is an analytic adic space over $\bZ_p$, the functor $(\ref{eq-locsys})$ is an equivalence of categories. 
\end{lemma}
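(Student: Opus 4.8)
The plan is to construct an explicit inverse to the functor (\ref{eq-locsys}). Since both $\Loc_{\bZ_p}(X)$ and $\Loc_{\bZ_p}(X^\lozenge)$ are $\bZ_p$-linear categories built from finite levels, I would first reduce to the torsion case: a $\bZ_p$-local system is a compatible inverse system of $\bZ/p^n$-local systems, and similarly on $X^\lozenge$, so it suffices to prove the statement with $\bZ_p$ replaced by $\bZ/p^n$, i.e. for lisse sheaves of $\bZ/p^n$-modules, which are classified by the (pro-)finite étale site. The problem thus becomes: for an analytic adic space $X$ over $\bZ_p$, the assignment sending a finite étale cover (or a $\bZ/p^n$-local system) $\bL$ on $X$ to the compatible family $(S^\sharp, f) \mapsto (f^\ast \bL)^\flat$ on $X^\lozenge$ is an equivalence.

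The key input is the description of the diamond $X^\lozenge$ together with the fact, from \cite{Scholze2017} and \cite{SW2020}, that for analytic $X$ the functor $X \mapsto X^\lozenge$ is fully faithful on the relevant categories and that $X^\lozenge_{\textup{\'et}} \simeq X_{\textup{\'et}}$ — more precisely, $|X^\lozenge| \cong |X|$ and the small étale sites agree (this is \cite[Lemma 15.6]{Scholze2017} for analytic adic spaces, using that $X$ is analytic so that $X^\ad$ has no "non-analytic" points causing trouble). Granting the equivalence of étale sites $X_{\textup{\'et}} \simeq X^\lozenge_{\textup{\'et}}$, lisse $\bZ/p^n$-sheaves correspond on both sides, and passing to the inverse limit over $n$ gives the equivalence for $\bZ_p$-local systems. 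To match this with the definition of $\Loc_{\bZ_p}(X^\lozenge)$ as a compatible system of functors on the fibers $X^\lozenge(S)$, I would use that a sheaf on $X^\lozenge_{\textup{\'et}}$ is the same datum as a $v$-sheaf-theoretic local system, and that such a thing is equivalently described by its pullbacks to all perfectoid $S$ with a map $S \to X^\lozenge$ together with descent data — this is exactly the "compatible system of functors" formulation, by $v$-descent of local systems (\cite[Lemma 17.1.8, 22.x]{SW2020}-type statements).

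Concretely, the forward functor (\ref{eq-locsys}) sends $\bL$ to $(S^\sharp,f) \mapsto ((f^\ast\bL)$ tilted$)$. For the inverse: given a compatible system $\{\bL_S\}$ on $X^\lozenge$, I would produce a pro-étale $\bZ_p$-local system on $X$ by evaluating on a pro-étale perfectoid cover. Namely, $X_{\textup{pro\'et}}$ has a basis of affinoid perfectoid objects $U = \Spa(R,R^+)$; such a $U$ has an associated tilt $U^\flat$, which is an object of $\Perfk$ equipped with an untilt map to $X^\lozenge$, so the system produces $\bL_{U^\flat} \in \Loc_{\bZ_p}(U^\flat) \simeq \Loc_{\bZ_p}(U)$ (here $U$ being perfectoid, its pro-étale/étale local systems only depend on the tilt). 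The compatibility/cocycle conditions on $\{\bL_S\}$ are exactly what is needed to glue these into a local system on all of $X$ by pro-étale descent. The two constructions are mutually inverse essentially by unwinding definitions, using $v$- (or pro-étale-) descent on both sides.

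The main obstacle, and where I expect the real content to lie, is the hypothesis that $X$ is \emph{analytic}: this is precisely what is needed to identify $X_{\textup{\'et}}$ with $X^\lozenge_{\textup{\'et}}$ and to ensure that $X^\lozenge$ "sees all of $X$" — for a general (non-analytic) adic space over $\Spa(\bZ_p)$, e.g. one with points of residue characteristic $p$ that are not analytic, the diamond $X^\lozenge$ forgets the non-analytic locus and the statement fails, which is why Pappas and Rapoport must work harder for general $\bZ_p$-schemes. So the careful point is to invoke the correct comparison of étale sites for analytic adic spaces (via \cite{Scholze2017}) and to check that the pro-étale site of $X$, restricted to perfectoid covers, is rich enough to reconstruct a local system from its diamond incarnation; the rest is formal manipulation of descent data.
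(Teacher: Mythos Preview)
Your proposal is correct, and in fact your ``Concretely'' paragraph is essentially the paper's argument: the paper constructs the quasi-inverse by choosing a single pro-\'etale perfectoid cover $\widetilde{X} \to X$ (using \cite[Lem.~15.3]{Scholze2017}), evaluating the given system on $\widetilde{X}^\flat \in X^\lozenge(\widetilde{X}^\flat)$, tilting, and then using the compatibility isomorphisms over $\widetilde{X}^\flat \times_{X^\lozenge} \widetilde{X}^\flat$ as a pro-\'etale descent datum to produce $\bL_X$ on $X$. Your version phrases this in terms of a basis of affinoid perfectoid objects rather than a single cover, but the content is the same.

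Your first paragraph, however, outlines a genuinely different route: reduce to $\bZ/p^n$-coefficients and invoke the equivalence of \'etale sites $X_{\textup{\'et}} \simeq X^\lozenge_{\textup{\'et}}$ for analytic $X$ (as in \cite[Lem.~15.6]{Scholze2017}). This is more conceptual and immediately isolates where the analyticity hypothesis enters, but it requires an extra step you only gesture at: matching the paper's definition of $\Loc_{\bZ_p}(X^\lozenge)$ as a compatible system of functors on $X^\lozenge(S)$ with the notion of a lisse sheaf on the \'etale site of the diamond $X^\lozenge$. This identification is true (it is $v$-descent for local systems) and standard, but it is not entirely formal, so the direct descent argument the paper uses is arguably shorter in this setting. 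Either approach is fine; the paper simply takes the more hands-on one.
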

\begin{proof}
	We use pro-\'etale descent to construct a quasi-inverse functor. Let $\bL$ be a pro-\'etale $\bZ_p$-local system over $X^\lozenge$. By \cite[Lem. 15.3]{Scholze2017}, there is a perfectoid space $\widetilde{X}$ which provides a pro-\'etale cover $\psi: \widetilde{X} \to X$. Then $(\widetilde{X}, \psi)$ defines a point in $X^\lozenge(\widetilde{X}^\flat)$, so $\bL$ determines a pro-\'etale $\bZ_p$-local system $\bL_{\widetilde{X}^\flat}$ over $\widetilde{X}^\flat$, and by the tilting equivalence we obtain a pro-\'etale $\bZ_p$-local system $\bL_{\widetilde{X}}$ on $\widetilde{X}$. 
	
	The fiber product $\widetilde{X}^\flat \times_{X^\lozenge} \widetilde{X}^\flat$ is representable by a perfectoid space over $X^\lozenge$. Denote by $p_1$ and $p_2$ the two projection morphisms $\widetilde{X}^\flat \times_{X^\lozenge} \widetilde{X}^\flat \to \widetilde{X}^\flat$. Since $\psi\circ p_1 = \psi \circ p_2$, the transition isomorphisms for $\bL$ induce an isomorphism $\alpha^\flat: p_1^\ast (\bL_{\widetilde{X}^\flat}) \xrightarrow{\sim} p_2^\ast (\bL_{\widetilde{X}^\flat})$. Applying the tilting equivalence again, we obtain an isomorphism $\alpha: p_1^\ast (\bL_{\widetilde{X}}) \xrightarrow{\sim} p_2^\ast (\bL_{\widetilde{X}})$ of pro-\'etale $\bZ_p$-local systems on $\widetilde{X} \times_X \widetilde{X}$, which is a pro-\'etale descent datum by the cocycle condition for $\bL$. Hence we obtain a pro-\'etale local system $\bL_X$ on $X$ via descent. We leave it to the reader to check that this construction defines a quasi-inverse to (\ref{eq-locsys}).
\end{proof}

\begin{rmk}
	One could alternatively define $\bZ_p$-local systems on $X^\lozenge$ using the \'etale or quasi-pro-\'etale site of $X^\lozenge$ in the sense of \cite[Def. 14.1]{Scholze2017}. We choose the definition given above for consistency with \cite{PR2021} and because it is most easily seen to be compatible with the constructions we make below. In the end, the resulting category of $\bZ_p$-local systems on $X^\lozenge$ is independent of this choice. This can be seen by combining Lemma \ref{lem-locsys} with \cite[Lem. 15.6]{Scholze2017} and \cite[Prop. 3.7]{MannWerner}.
\end{rmk}

\begin{rmk} \label{rmk-forgetful}
	If $K$ is a complete discretely valued extension of $\bQ_p$, and $X = \Spa(K, \cO_K)$, then $\widetilde{X}$ can be chosen as $\Spa(C, \cO_C)$ for a complete algebraic closure $C$ of $K$. In that case, $\Loc_{\bZ_p}(X) = \Rep_{\bZ_p}(\Gamma_K)$, and, under the equivalence (\ref{eq-locsys}), evaluation on $\Spa(C,\cO_C)$  corresponds to the forgetful functor $\Rep_{\bZ_p}(\Gamma_K) \to \Vect(\bZ_p)$. 
\end{rmk}

Let $\cG$ is a smooth affine group scheme over $\bZ_p$, and let $\ul{\cG(\bZ_p)}$ be the pro-\'etale sheaf on $\Perfk$ associated to $\cG(\bZ_p)$ (see (\ref{eq-constantsheaf}). Denote by $\Tors_{\ul{\cG(\bZ_p)}}(S)$ the category of pro-\'etale $\ul{\cG(\bZ_p)}$-torsors on $S$ as in \cite[\textsection 9.3]{SW2020}.

A \textit{pro-\'etale $\ul{\cG(\bZ_p)}$-torsor} on $X^\lozenge$ is a compatible system of functors
\begin{align*}
	X^\lozenge(S) \to \Tors_{\ul{\cG(\bZ_p)}}(S), \ (S^\sharp, f) \mapsto \bP_S
\end{align*}
for every $S$ in $\Perfk$. The category $\Loc_{\bZ_p}(X^\lozenge)$ inherits exact and tensor structures from the categories $\Loc_{\bZ_p}(S)$ as $S$ varies in $\Perf_k$, hence we have a Tannakian interpretation of pro-\'etale $\ul{\cG(\bZ_p)}$-torsors as well. Let $\bP$ be a pro-\'etale $\ul{\cG(\bZ_p)}$-torsor over $X^\lozenge$, and let $\rho: \cG \to \GL(\Lambda)$ be an algebraic representation of $\cG$ on a finite $\bZ_p$-module $\Lambda$. For every $(S^\sharp, f) \in X^\lozenge(S)$, we obtain a pro-\'etale $\ul{\bZ_p}$-local system 
\begin{align*}
	\bL_\rho:=\bP_S \times^{\ul{\cG(\bZ_p)}} \ul{\Lambda}_S
\end{align*}
on $S$. In other words, $\bL_\rho$ is the quotient of $\bP_S \times \ul{\Lambda}_S$ by the $\ul{\cG(\bZ_p)}$-action $g \cdot (p, \lambda) = (pg^{-1}, g\lambda)$. 

Let $\Rep_{\bZ_p}(\cG)$ denote the category of algebraic representations $\rho: \cG \to \GL(\Lambda)$ of $\cG$ on finite projective $\bZ_p$-modules $\Lambda$. Thus, from a pro-\'etale $\ul{\cG(\bZ_p)}$-torsor $\bP$ we obtain an exact $\bZ_p$-linear tensor functor
\begin{align}\label{eq-tensorfunctor}
	\omega_\bP: \Rep_{\bZ_p}(\cG) \to \Loc_{\bZ_p}(X^\lozenge), \ (\Lambda, \rho) \mapsto \bL_\rho.
\end{align}

\begin{lemma}\label{lem-tannakiantorsors}
	The assignment $\bP \mapsto \omega_{\bP}$ determines an equivalence of categories between $\Tors_{\ul{\cG(\bZ_p)}}(X^\lozenge)$ and the category of exact $\bZ_p$-linear tensor functors $\Rep_{\bZ_p}(\cG) \to \Loc_{\bZ_p}(X^\lozenge)$.
\end{lemma}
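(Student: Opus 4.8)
The plan is to prove this via a standard Tannakian-formalism argument, in the style of the corresponding statements for torsors over schemes (e.g. Broshi's thesis, or Saavedra Rivano). First I would describe the quasi-inverse. Given an exact tensor functor $\omega \colon \Rep_{\bZ_p}(\cG) \to \Loc_{\bZ_p}(X^\lozenge)$, I want to produce a pro-\'etale $\ul{\cG(\bZ_p)}$-torsor $\bP$ over $X^\lozenge$. Since a torsor over $X^\lozenge$ is by definition a compatible system of torsors $\bP_S$ over $S$ for $(S^\sharp,f) \in X^\lozenge(S)$, it suffices to do this $S$ by $S$, functorially. So fix $S$ and evaluate: $\omega$ composed with evaluation at $(S^\sharp,f)$ gives an exact tensor functor $\omega_{S} \colon \Rep_{\bZ_p}(\cG) \to \Loc_{\bZ_p}(S)$. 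Now $\Loc_{\bZ_p}(S)$ is itself, by pro-\'etale descent along a cover trivializing the relevant local systems, glued from copies of $\Vect(\bZ_p)$; concretely, choosing a pro-\'etale cover $\tilde S \to S$ over which a given local system becomes constant, the fiber functor ``take the underlying $\bZ_p$-module over $\tilde S$'' turns $\omega_S$ into an exact tensor functor $\Rep_{\bZ_p}(\cG) \to \Vect(\bZ_p)$. By the Tannakian reconstruction theorem for the group scheme $\cG$ over $\bZ_p$ (the key input; $\cG$ is smooth affine over $\bZ_p$, hence flat, so the reconstruction of $\cG$-torsors from exact $\bZ_p$-linear tensor functors applies), such a fiber functor is a $\cG(\bZ_p)$-torsor, and over $\tilde S$ — pulled back to the pro-\'etale site — this is exactly a trivial $\ul{\cG(\bZ_p)}$-torsor with descent datum, yielding $\bP_S \in \Tors_{\ul{\cG(\bZ_p)}}(S)$.

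Next I would check the two compatibilities that make this assignment land in $\Tors_{\ul{\cG(\bZ_p)}}(X^\lozenge)$ and make the construction functorial in $\omega$. First, naturality in $S$: for a morphism $a \colon T \to S$ in $\Perfk$ with the induced untilt data, pullback of local systems is compatible with the fiber functors, so $a^\ast \bP_S \cong \bP_T$ canonically, and these isomorphisms satisfy the cocycle condition because those for $\omega$ do. Second, I must record that $\bP \mapsto \omega_{\bP}$ and $\omega \mapsto \bP$ are mutually quasi-inverse. In one direction, starting from $\bP$, the functor $\omega_\bP$ sends $(\Lambda,\rho)$ to the local system $\bL_\rho$ whose frame torsor is $\bP_S \times^{\ul{\cG(\bZ_p)}}\ul{\GL(\Lambda)}$; applying the reconstruction above recovers $\bP_S$ up to canonical isomorphism because the collection of representations $\rho$ together with the forgetful structure determines the torsor — this is precisely the content of Tannakian reconstruction. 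In the other direction, starting from $\omega$, the reconstructed torsor $\bP$ has $\omega_\bP$ naturally isomorphic to $\omega$, again by the reconstruction theorem applied fiberwise. One should also verify exactness and the tensor-compatibility of $\omega_\bP$ are genuinely used/preserved, but this is formal.

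The main obstacle, and the only place real care is needed, is the input from Tannakian duality over the \emph{non-field} base $\bZ_p$: the classical Tannakian theorems are cleanest over a field, and over $\bZ_p$ one needs the version of the reconstruction theorem valid for flat affine group schemes over a general base (so that ``exact'' is the right condition and finite projective — not just finite free — modules are handled correctly), plus the fact that pro-\'etale-locally every object of $\Loc_{\bZ_p}(S)$ is constant so that evaluation against a pro-\'etale cover really does produce a genuine fiber functor to $\Vect(\bZ_p)$. I would cite the appropriate form of this (e.g. \cite{SW2020} for the torsor/Tannakian dictionary over such bases, or the standard references for affine group scheme torsors) rather than reprove it. A secondary, purely bookkeeping, point is the interplay between the pro-\'etale site and the $v$-site and the tilting equivalence in matching up $\bP_S$ over $S$ with the data over $S^\sharp$; but since both $\Loc_{\bZ_p}(X^\lozenge)$ and $\Tors_{\ul{\cG(\bZ_p)}}(X^\lozenge)$ are defined by the same indexing over $X^\lozenge(S)$, this matches tautologically. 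Overall I expect the proof to be a few paragraphs, with the substance concentrated in the fiberwise Tannakian step and the rest routine descent and cocycle verification, which I would leave to the reader as is done for Lemma~\ref{lem-locsys}.
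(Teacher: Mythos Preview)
Your proposal is correct and follows essentially the same approach as the paper: reduce to the case of a single perfectoid space $S$ (where both sides are defined as compatible systems indexed by $X^\lozenge(S)$), and there invoke the Tannakian dictionary from \cite[Prop.~22.6.1]{SW2020}. The paper's proof is simply a terser version of yours---it cites \cite{SW2020} directly for the fiberwise equivalence rather than sketching the pro-\'etale trivialization and reconstruction as you do, and then appeals to functoriality for general $X$.
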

\begin{proof}
	By functoriality of the construction, it is enough to show that if $S$ is in $\Perfk$, then the category of pro-\'etale $\ul{\cG(\bZ_p)}$-torsors on $S^\lozenge$ is equivalent to the category of exact $\bZ_p$-linear tensor functors $\Rep_{\bZ_p}(\cG) \to \Loc_{\bZ_p}(S^\lozenge)$. For such an $S$, we have $\Loc_{\bZ_p}(S^\lozenge) = \Loc_{\bZ_p}(S)$, and the result follows from the proof of \cite[Prop. 22.6.1]{SW2020}. Let us explain.
	
	Denote by $\omega_0$ the exact $\bZ_p$-linear tensor functor $\Rep_{\bZ_p}(\cG) \to \Loc_{\bZ_p}(S)$ given by $(\Lambda, \rho) \mapsto \ul{\Lambda}_S$. If $\omega$ is another exact $\bZ_p$-linear tensor functor $\omega: \Rep_{\bZ_p}(\cG) \to \Loc_{\bZ_p}(S)$, define a pro-\'etale sheaf $\bP_\omega = \Isom^\otimes(\omega_0, \omega)$ classifying tensor isomorphisms between $\omega_0$ and $\omega$. That is, the points of $\bP_\omega$ in $T\to S$ are given by isomorphisms of tensor functors $\omega_{0,T} \to \omega_T$, where the subscript $(-)_T$ indicates that we compose the given tensor functor with pullback $\Loc_{\bZ_p}(S) \to \Loc_{\bZ_p}(T)$. Similarly, define $\Aut^\otimes(\omega_0) = \Isom^\otimes(\omega_0,\omega_0).$
	
	The natural action of $\ul{\cG(\bZ_p)}$ on $\omega_0$ determines morphism of pro-\'etale sheaves $\ul{\cG(\bZ_p)} \to \Aut^\otimes(\omega_0)$, which can be seen to be an isomorphism by reducing to the case where $S$ is strictly totally disconnected and applying ordinary Tannakian duality. Thus we obtain an action of $\ul{\cG(\bZ_p)}$ on $\bP_\omega$ by pre-composition. The arguments of \cite[Prop. 22.6.1]{SW2020} imply that $\omega$ is pro-\'etale locally isomorphic to $\omega_0$, and therefore that $\bP_\omega$ is a pro-\'etale $\ul{\cG(\bZ_p)}$-torsor. We conclude by noting that $\bP \mapsto \omega_{\bP}$ and $\omega \mapsto \bP_{\omega}$ determine mutually quasi-inverse functors. 
\end{proof}

%As in the case of local systems, pullback and tilting define a natural functor
%\begin{align}\label{eq-torsors}
%	\Tors_{\ul{\cG(\bZ_p)}}(X) \to \Tors_{\ul{\cG(\bZ_p)}}(X^\lozenge).
%\end{align}
%
%\begin{lemma}
%	If $X$ is an analytic adic space over $\bZ_p$, the functor $(\ref{eq-torsors})$ is an equivalence of categories.
%\end{lemma}
%\begin{proof}
%	The proof for Lemma \ref{lem-locsys} works here as well, after replacing ``$\bZ_p$-local system'' by ``$\ul{\cG(\bZ_p)}$-torsor'' everywhere.
%\end{proof}

In the remainder of this section we explain how to assign a $\bZ_p$-local system to a shtuka over a $v$-sheaf, following \cite[\textsection 2.4]{PR2021} and \cite[Ch. 22]{SW2020}. Let $S = \Spa(R,R^+)$ be an affinoid perfectoid space over $k$ with an untilt $S^\sharp = \Spa(R^\sharp, R^{\sharp+})$ over $\cO_{\breve{E}}$, and suppose $S^\sharp$ has the property that the corresponding morphism $S \to \Spd(\cO_E)$ factors through $\Spd(E)$. Consider the integral Robba ring, as in \cite[Def. 4.2.2]{KL} and \cite[\textsection 22.3]{SW2020}:
\begin{align}\label{eq-Robba}
	\widetilde{\mathcal{R}}^\mathrm{int}_S := \varinjlim_{r > 0} H^0(\cY_{[0,r]}(S), \cO_{\cY_{[0,r]}(S)}).
\end{align}
This ring carries a natural Frobenius morphism, which is compatible with that of $S \bdtimes \bZ_p$. 

Now let $(\sV, \phi_{\sV})$ be a (vector bundle) shtuka over $S$ with one leg at $S^\sharp$. For any $r > 0$ with the property that $\cY_{[0,r]}(S) \subset S \bdtimes \bZ_p \setminus S^\sharp$, we obtain by restriction of $(\sV, \phi_{\sV})$ a $\phi^{-1}$-module on $\cY_{[0,r]}(S)$. Note that such an $r$ is guaranteed to exist because we have taken the untilt $S^\sharp$ to live over $E$. Passing to the limit, we obtain a $\phi$-module 
\begin{align*}
	\varinjlim_{r > 0} H^0(\cY_{[0,r]}(S), \sV)
\end{align*}
over $\widetilde{\mathcal{R}}^\mathrm{int}_S$. By \cite[Thm. 8.5.3, Thm. 9.3.7]{KL}, such an object is equivalent to a $\bZ_p$-local system on $S$. This construction defines a functor 
\begin{align}\label{eq-shtukaslocsys}
	(\textup{shtukas over $S$ with one leg at $S^\sharp$}) \to \Loc_{\bZ_p}(S).
\end{align}

Let us return to the notation of the previous section, so $G$ is a reductive group scheme over $\bQ_p$ and $\cG$ is a parahoric $\bZ_p$-model for $G$. 
Let $(\sP, \phi_{\sP})$ be a shtuka over $S$ with one leg at $S^\sharp$. Applying the Tannakian formalism to (\ref{eq-shtukaslocsys}), we obtain a functor
\begin{align}\label{eq-Gshtukastors}
	(\textup{$\cG$-shtukas on $S$ with one leg at $S^\sharp$}) \to \Tors_{\ul{\cG(\bZ_p)}}(S).
\end{align}

Since these constructions are functorial in $S$, they extend to functors for shtukas over $v$-sheaves associated to adic spaces.

\subsection{Crystalline representations and prismatic $F$-crystals}
% explain how to go from a lattice in a crystalline repn to a BK-module to a shtuka
Let $K$ be a complete discretely valued extension of $\bQ_p$ with ring of integers $\cO_K$ and perfect residue field $k$. Fix an algebraic closure $\bar{K}$ of $K$, and let $\Gamma_K = \Gal(\bar{K}/K)$ be the absolute Galois group of $K$. Denote by $C$ the completion of $\bar{K}$, and by $\cO_C$ its ring of integers.

Let $\Rep_{\bZ_p} (\Gamma_K)$ denote the category of finite free $\bZ_p$-modules $\Lambda$ equipped with a continuous action of $\Gamma_K$, and let $\Rep_{\bZ_p}^\crys( \Gamma_K)$ denote the full subcategory of $\Rep_{\bZ_p} (\Gamma_K)$ consisting of those representations such that $\Lambda[1/p]$ is crystalline. In this section we describe a method for obtaining a vector bundle shtuka over $\Spd(\cO_K)$ from a representation in $\Rep_{\bZ_p}^\crys(\Gamma_K)$. This method relies on the description of $\Rep_{\bZ_p}^\crys(\Gamma_K)$ as the category of prismatic $F$-crystals on $\Spf(\cO_K)_\Prism$ given in \cite{BS21}. For background on prisms and prismatic $F$-crystals we refer the reader to   \cite{BS22} and \cite{BS21}.

Let $\cX$ be a $p$-adic formal scheme. The \textit{absolute prismatic site of $\cX$}, denoted $\cX_\Prism$, is the site whose underlying category consists of the opposite category of the category of pairs $((A,I), x)$, where $(A,I)$ is a bounded prism and $x: \Spf(A/I) \to \cX$ is a map of formal schemes, and whose topology is given by the flat topology on prisms. Here we say a morphism of prisms $(A, I) \to (B, IB)$ is \textit{(faithfully) flat} if the ring homomorphism $A \to B$ is $(p,I)$-completely (faithfully) flat, that is, $B/(p,I)B$ is (faithfully) flat over $A/(p,I)$, and $\textup{Tor}_i^A(A/(p,I), B) = 0$ for all $i > 0$. 

\begin{rmk}\label{rmk-ff}
	If $(A,I) \to (B,IB)$ is a (faithfully) flat morphism of prisms, the induced maps $A/(p,I)^n \to B/(p,I)^nB$ are (faithfully) flat for every $n$. Indeed, flatness follows from \cite[Tag 051C]{stacks-project}, and then faithful flatness follows by induction since the maps $A/(p,I)^{n} \to A/(p,I)^{n-1}$ are nilpotent thickenings.
\end{rmk}

If $((A,I),x)$ is an object in the underlying category of $\cX_\Prism$, we will sometimes say that $(A,I)$ is a \textit{prism over $\cX$}. The site $\cX_\Prism$ is naturally equipped with a structure sheaf $\cO_\Prism: ((A,I),x) \mapsto A$ and an ideal sheaf $\cI_\Prism: ((A,I),x) \mapsto I$. 

Following \cite{BS21}, we write $\Vect^\varphi(\cX_\Prism, \cO_\Prism)$ for the category of \textit{prismatic $F$-crystals} on $\cX_\Prism$. This is the category of pairs $(\cE, \varphi_\cE)$ consisting of a vector bundle $\cE$ on the ringed site $(\cX_\Prism, \cO_\Prism)$ along with an isomorphism $\varphi_\cE: \varphi^\ast \cE[1/\cI_\Prism] \xrightarrow{\sim} \cE[1/\cI_\Prism]$. By \cite[Prop. 2.7]{BS21}, a prismatic $F$-crystal $(\cE, \varphi_\cE)$ is given concretely by the following data: For every for every object $\tilde{x} = ((A,I), x)$ in $\cX_\Prism$, $(\cE, \varphi_{\cE})$ determines a pair $(\cE_A, \varphi_{\cE_A})$ consisting of a vector bundle $\cE_A$ on $\Spec(A)$ along with an isomorphism 
\begin{align*}
	\varphi_{\cE_A}: \varphi_A^\ast\left(\cE_A\right)\res_{\Spec(A[1/I])} \xrightarrow{\sim} \cE_A\res_{\Spec(A[1/I])}.
\end{align*}
These come equipped with transition isomorphisms
\begin{align*}
	\theta_f:\cE_A \otimes_A B \xrightarrow{\sim} \cE_B,
\end{align*}
which are compatible with Frobenius, for every morphism $f: ((A,I),x) \to ((B,J),y)$ in $\cX_\Prism$. Moreover the collection $\{\theta_f\}$ satisfies the obvious cocycle condition.% Note that the pair $(\cE_A, \varphi_{\cE_A})$ actually depends on the ideal $I$ and the map $x:\Spf(A/I) \to \cX$, not just on $A$. We think this abuse of notation is unlikely to cause confusion.

As in \cite{BS21}, we write $\cO_\Prism[1/\cI_\Prism]$ for the sheaf of rings $(A,I) \mapsto A[1/I]$, and we define $\cO_\Prism[1/\cI_\Prism]_p^\wedge$ to be 
\begin{align*}
	\cO_\Prism[1/\cI_\Prism]_p^\wedge := \varprojlim_n \cO_\Prism[1/\cI_\Prism]/p^n\cO_\Prism[1/\cI_\Prism].
\end{align*}
Let $\Vect(\cX_\Prism, \cO_\Prism[1/\cI_\Prism]_p^\wedge)^{\varphi = 1}$ denote the category of \textit{Laurent $F$-crystals} on $\cX_\Prism$. This is the category of pairs $(E, \varphi_E)$ consisting of a vector bundle $E$ on the ringed site $(\cX_\Prism, \cO_\Prism[1/\cI_\Prism]_p^\wedge)$ along with an isomorphism $\varphi_E: \varphi^\ast E \xrightarrow{\sim} E$. These admit a similar concrete description as in the case of prismatic $F$-crystals.  Let $\cX_\eta$ be the adic space generic fiber of $\cX$ (with respect to $\bZ_p$), and let $\Loc_{\bZ_p}(\cX_\eta^\lozenge)$ be the category of $\bZ_p$-local systems on $X_\eta^\lozenge$. By Artin-Schreier theory (see \cite[Cor. 3.8]{BS21}), there is a natural equivalence of categories
\begin{align} \label{eq-artinschreier}
	\Vect(\cX_\Prism, \cO_\Prism[1/\cI_\Prism]_p^\wedge)^{\varphi = 1} \xrightarrow{\sim} \Loc_{\bZ_p}(\cX_\eta)
\end{align}
for any bounded $p$-adic formal scheme $\cX$. 

Suppose now $\cX = \Spf(\cO_K)$. Then the right-hand side of (\ref{eq-artinschreier}) identifies with $\Rep_{\bZ_p}(\Gamma_K)$, and the functor (\ref{eq-artinschreier}) is given by 
\begin{align}\label{eq-explicitetalerealization}
	E \mapsto E(W(\cO_C^\flat), \ker(\theta))^{\varphi = 1},
\end{align}
where $C$ is the $p$-adic completion of $\bar{K}$, and the $\Gamma_K$-action on the right-hand side is induced by functoriality and the prismatic crystal property.

Via base change $\cO_\Prism \to \cO_\Prism[1/\cI_\Prism]_p^\wedge$, we obtain a functor 
\begin{align*}
	T: \Vect^\varphi(\Spf(\cO_K)_\Prism, \cO_\Prism) \to \Vect(\Spf(\cO_K)_\Prism, \cO_\Prism[1/\cI_\Prism]_p^\wedge)^{\varphi=1} \xrightarrow{\sim} \Rep_{\bZ_p}(\Gamma_K),
\end{align*}
called the \textit{\'etale realization functor}. By \cite[Prop. 5.3]{BS21}, if $(\cE, \varphi_{\cE})$ is a prismatic $F$-crystal on $\Spf(\cO_K)_\Prism$, then $T(\cE)[1/p]$ is a crystalline $\Gamma_K$-representation. The following is the main theorem of \cite{BS21}.

\begin{thm}[Bhatt-Scholze] \label{thm-bs}
	The \'etale realization functor
	\begin{align*}
		T: \Vect^\varphi(\Spf(\cO_K)_\Prism, \cO_\Prism) \to \Rep_{\bZ_p}^\crys(\Gamma_K)
	\end{align*}
	is an equivalence of tensor categories.
\end{thm}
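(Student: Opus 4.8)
The plan is to make both sides of the claimed equivalence explicit in terms of the Breuil--Kisin prism and then to play Kisin's theory of Breuil--Kisin modules off against prismatic descent. Fix a uniformizer $\pi \in \cO_K$ with Eisenstein polynomial $E(u) \in W(k)[u]$ and form the Breuil--Kisin prism $(\fS,(E(u)))$, where $\fS = W(k)[[u]]$ with $\delta$-structure determined by $\varphi(u) = u^p$, so that $\fS/E(u) = \cO_K$. The first point is that the Breuil--Kisin prism, together with its iterated self-coproducts $\fS^{(\bullet)}$ in the category of bounded prisms over $\cO_K$, presents the topos of sheaves on $(\Spf\cO_K)_\Prism$; granting this, $\Vect^\varphi((\Spf\cO_K)_\Prism, \cO_\Prism)$ is identified with the category of \emph{stratified Breuil--Kisin modules}, i.e. finite projective $\fS$-modules $\fM$ equipped with $\varphi_\fM \colon \varphi^\ast\fM[1/E] \xrightarrow{\ \sim\ } \fM[1/E]$ and a descent datum along the cosimplicial ring $\fS^{(\bullet)}$ satisfying the cocycle condition and compatibility with $\varphi$. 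Carrying this out rests on the explicit computation of the prismatic envelopes $\fS^{(1)}$ and $\fS^{(2)}$ attached to $\fS^{\otimes 2} \twoheadrightarrow \cO_K$ and $\fS^{\otimes 3} \twoheadrightarrow \cO_K$, using the theory of $\delta$-rings from \cite{BS22} and $(p,I)$-completed flatness (Remark \ref{rmk-ff}).

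Next I would identify the \'etale realization $T$ concretely. Choosing a compatible system $(\pi^{1/p^n})_n$ gives a $\varphi$-equivariant map of prisms $\fS \to A_{\mathrm{inf}} = W(\cO_{\bar K}^\flat)$, $u \mapsto [\pi^\flat]$, where $A_{\mathrm{inf}}$ is a perfect prism over $\cO_K$ carrying a semilinear $\Gamma_K$-action. Evaluating a prismatic $F$-crystal on $A_{\mathrm{inf}}$ and then $p$-completing after inverting $\cI_\Prism$ produces a Laurent $F$-crystal, hence by (\ref{eq-artinschreier}) a $\bZ_p$-local system on $\Spa(K,\cO_K)^\lozenge$, i.e. (Remark \ref{rmk-forgetful}) an object of $\Rep_{\bZ_p}(\Gamma_K)$; this is $T$, it is manifestly an exact tensor functor, and it lands in $\Rep^\crys_{\bZ_p}(\Gamma_K)$ by \cite[Prop. 5.3]{BS21}.

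For essential surjectivity, let $\Lambda$ be a lattice in a crystalline representation. Kisin's theory attaches to $\Lambda$ a Breuil--Kisin module $\fM(\Lambda)$ of finite $E$-height, functorially in $\Lambda$, recovering $\Lambda$ as the $\Gal(\bar K/K(\pi^{1/p^\infty}))$-representation associated to $\fM(\Lambda)$. The real work is to promote $\fM(\Lambda)$ to a \emph{stratified} Breuil--Kisin module by supplying the descent datum over $\fS^{(\bullet)}$; this is exactly where crystallinity is used, since a merely semistable $\Lambda$ still yields $\fM(\Lambda)$ but admits no such datum. One builds the stratification from the Breuil--Kisin--Fargues module $\fM(\Lambda) \otimes_\fS A_{\mathrm{inf}}$ and its $\Gamma_K$-action, using that for crystalline $\Lambda$ this action descends compatibly along the \v{C}ech nerve $A_{\mathrm{inf}}^{(\bullet)} \to \fS^{(\bullet)}$ --- equivalently, one manufactures the datum from the filtered $\varphi$-module $D_\crys(\Lambda[1/p])$ over $\cO_K$ via the crystalline comparison, recovering in prismatic language Kisin's characterization of those Breuil--Kisin modules that come from crystalline representations.

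Finally, for full faithfulness I would pass to internal homs and duals of vector bundle crystals, reducing the statement to: for every prismatic $F$-crystal $\cF$, the natural map
\[
	H^0\bigl((\Spf\cO_K)_\Prism,\cF\bigr)^{\varphi = 1} \longrightarrow T(\cF)^{\Gamma_K}
\]
is bijective. The source is the set of $x \in \fM$ lying in the equalizer of $\fM \rightrightarrows \fM^{(1)}$ with $\varphi_\fM(x) = x$, and the target is $\bigl((\fM \otimes_\fS A_{\mathrm{inf}})[1/\cI_\Prism]_p^\wedge\bigr)^{\varphi = 1,\,\Gamma_K}$; injectivity is formal from faithful flatness of $\fS^{(\bullet)} \to A_{\mathrm{inf}}^{(\bullet)}$, while surjectivity --- showing that a simultaneously $\varphi$- and $\Gamma_K$-invariant class in the $A_{\mathrm{inf}}$-realization descends to a prismatic global section --- again exploits crystallinity, e.g. by comparing the prismatic \v{C}ech--Frobenius complex of $\cF$ with a Herr-type (or $B_\crys^+$-)complex computing $R\Gamma(\Gamma_K, T(\cF))$ and checking that the comparison is an isomorphism in degrees $\le 1$. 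The main obstacle, and the step that truly requires the crystalline hypothesis rather than mere formalities, is the construction of the descent datum on Kisin's Breuil--Kisin module in the essential-surjectivity step (together with the matching surjectivity in the full-faithfulness reduction); both ultimately rest on the fine structure of crystalline representations --- Kisin modules, $D_\crys$, and the $A_{\mathrm{inf}}$- and $B_\crys^+$-comparison isomorphisms.
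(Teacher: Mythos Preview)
The paper does not prove this theorem at all: the result is attributed to Bhatt--Scholze, and the paper's ``proof'' consists of the single citation \cite[Thm.~5.6]{BS21} for the equivalence, together with the formal remark (via \cite[I,~4.4]{SR72}) that a tensor functor which happens to be an equivalence of categories is automatically an equivalence of tensor categories. No further argument is given or intended.

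Your proposal, by contrast, is an outline of how one might actually \emph{prove} the Bhatt--Scholze theorem from scratch: pass to the Breuil--Kisin prism and its \v{C}ech nerve to rewrite prismatic $F$-crystals as stratified Breuil--Kisin modules, then invoke Kisin's theory to produce the underlying module and manufacture the descent datum from crystallinity. This is a reasonable sketch of the strategy in \cite{BS21}, and nothing you wrote is wrong as a roadmap, but it is several orders of magnitude more than what the paper records, and the steps you flag as ``the real work'' (constructing the stratification on $\fM(\Lambda)$, and the surjectivity half of the full-faithfulness reduction) are genuinely hard and are not carried out in your proposal --- you describe where the difficulty lies rather than resolving it. For the purposes of this paper, the intended answer is simply to cite \cite[Thm.~5.6]{BS21} and observe that compatibility with tensor products is immediate from the construction.
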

\begin{proof}
	That $T$ is an equivalence is \cite[Thm. 5.6]{BS21}. Moreover, that $T$ is compatible with tensor products follows from its definition and the description (\ref{eq-explicitetalerealization}) of the equivalence (\ref{eq-artinschreier}). We conclude using the fact that a tensor functor which is an equivalence of categories is necessarily an equivalence of tensor categories \cite[I, 4.4]{SR72}.
\end{proof}

We denote the quasi-inverse tensor functor for $T$ by
\begin{align}\label{eq:U}
	U: \Rep_{\bZ_p}^\crys(\Gamma_K) \xrightarrow{\sim} \Vect^\varphi(\Spf(\cO_K)_\Prism, \cO_\Prism).
\end{align}

\section{Prismatic $F$-crystals and shtukas} \label{section-prismatic}

\subsection{From prismatic $F$-crystals to shtukas}\label{sub-prismtoshtuka}
In this section we describe a process for obtaining shtukas from prismatic $F$-crystals, with the goal of defining a $\cG$-shtuka over $\Spd(\cO_K)$ from a $\cG$-valued crystalline representation of $\Gamma_K$.

Let $\cX$ be a formal scheme which is finite type over $\bZ_p$. Following some ideas from \cite[\textsection 4.4]{PR2021}, we can associate to any prismatic $F$-crystal $(\cE, \varphi_{\cE})$ over $\cX_\Prism$ a shtuka $(\sV, \phi_{\sV})$ over $\cX^\lozenge$. For simplicity, suppose $\cX = \Spf(A)$ is affine. Let $S = \Spa(R,R^+)$ be an affinoid perfectoid space over $k$, and let $(S^\sharp, x) \in \cX^\lozenge(S)$ with $S^\sharp = \Spa(R^\sharp, R^{\sharp+})$. Then $x$ determines a ring homomorphism $A \to R^{\sharp+}$, and therefore a map of formal schemes $\Spf(R^{\sharp+}) \to \Spf(A)$, which we denote also by $x$. Since the map $\theta: W(R^+) \to R^{\sharp+}$ is surjective, and the pair $(W(R^+), \ker(\theta))$ is a bounded prism, we obtain an object 
\begin{align*}
	\tilde{x} = ((W(R^+), \ker(\theta)), x: \Spf(R^{\sharp+}) \to \Spf(A))
\end{align*}
in $\cX_\Prism$. 
	
Evaluating $(\cE, \varphi_\cE)$ on $\tilde{x}$, we obtain a BKF-module $(\cE_{W(R^+)}, \varphi_{\cE_{W(R^+)}})$ on $S$ with one leg at $S^\sharp$. By pulling this back along the morphism of locally ringed spaces
\begin{align*}
	\cY_{[0,\infty)}(S) \to \Spa(W(R^+)) \to \Spec(W(R^+)),
\end{align*}  
this BKF-module induces a shtuka $(\sV_{S}, \phi_{\sV_S})$ over $S$ with one leg at $S^\sharp$. The compatibility isomorphisms coming from the prismatic $F$-crystal induce isomorphisms of the corresponding shtukas, hence the assignment 
\begin{align}\label{eq-crystaltoshtuka}
	(S^\sharp, x) \in \cX^\lozenge(S) \mapsto (\sV_{S}, \phi_{\sV_S}).
\end{align}
determines a shtuka $(\sV,\phi_{\sV})$ over $\cX^\lozenge / \Spd(\bZ_p)$.

\begin{Def} \label{cons-crystaltoshtuka}
	The shtuka $(\sV,\phi_{\sV})$ over $\cX^\lozenge / \Spd(\bZ_p)$ defined by (\ref{eq-crystaltoshtuka}) is the \textit{shtuka associated to the prismatic $F$-crystal $(\cE, \varphi_{\cE})$}.
\end{Def}
	
Let us return to the notation of the previous section: Let $K$ denote a complete discretely valued extension of $\bQ_p$ with perfect residue field $k$, and let $\cO_K$ be its ring of integers. Let $C$ denote the completion of $\bar{K}$. 

We have a functor 
\begin{align}\label{eq-shtukastoLaurent}
	\Sht(\Spd(K)) \to \Vect(\Spf(\cO_K)_\Prism, \cO_\Prism[1/\cI_\Prism]_p^\wedge)^{\varphi=1}
\end{align}
which factorizes the base change
\begin{align*}
	\Vect^\varphi(\Spf(\cO_K)_\Prism, \cO_\Prism) \to \Vect(\Spf(\cO_K)_\Prism, \cO_\Prism[1/\cI_\Prism]_p^\wedge)^{\varphi=1}
\end{align*}
along $\cO_\Prism \to \cO_\Prism[1/\cI_\Prism]_p^\wedge$. To construct this functor, first note that by \cite[Ex. 11.12]{Scholze2017}, there is a natural isomorphism
\begin{align*}
	\Spd(C) \times_{\Spd(K)} \Spd(C) \xrightarrow{\sim} \Spd(C^0(\Gamma_K, C), C^0(\Gamma_K, \cO_C)).
\end{align*}
For brevity, let $\tilde{C} = C^0(\Gamma_K,C)$ and $\tilde{C}^+ = C^0(\Gamma_K, \cO_C)$. Let $p_1, p_2: C^\flat \to \tilde{C}^\flat$ be the two morphisms which induce the projections $\Spd(C) \times_{\Spd(K)} \Spd(C) \to \Spd(C)$. Then $\Vect((\Spf(\cO_K)_\Prism, \cO_\Prism[1/\cI_\Prism]_p^\wedge)^{\varphi=1}$ can be interpreted as the category of $W(C^\flat)$-modules $N$ equipped with a Frobenius $\varphi_N: \varphi^\ast N \xrightarrow{\sim} N$ and an isomorphism
\begin{align}\label{eq-descentdatum}
	\beta_N: N \otimes_{W(C^\flat),p_1} W(\tilde{C}^\flat) \xrightarrow{\sim} N \otimes_{W(C^\flat),p_2} W(\tilde{C}^\flat)
\end{align}
which is compatible with the Frobenius on both sides. For this description see, for example, \cite[Proof of Thm. 5.6]{Wu2021}. 

Now let $\sV$ be a shtuka over $\Spd(K)$. The evaluation of $\sV$ on $\Spa(C, \cO_C)$ can be pulled back along $\Spa(W(C^\flat)) \to \Spa(C^\flat, \cO_C^\flat) \bdtimes \bZ_p$ to obtain a $\varphi$-module $N$ over $W(C^\flat)$. Furthermore, since $\sV$ is defined over $\Spd(K)$, it is equipped with a descent datum $\alpha_\sV:p_1^\ast \sV_{\Spa(C^\flat)} \xrightarrow{\sim} p_2^\ast \sV_{\Spa(C^\flat)}$ over $\Spd(\tilde{C}, \tilde{C}^+)$. By pulling back this descent datum along $W(\tilde{C}^\flat) \to \Spa(\tilde{C}^\flat, \tilde{C}^{\flat+}) \bdtimes \bZ_p$, we obtain an isomorphism $\beta_N$ as in (\ref{eq-descentdatum}).

\begin{lemma}\label{lem-commutes}
	The composition of functors
	\begin{align}\label{eq-comp1}
		\Sht(\Spd(K)) \xrightarrow{(\ref{eq-shtukaslocsys})} \Loc_{\bZ_p}(\Spd(K)) \xrightarrow{(\ref{eq-locsys})} \Loc_{\bZ_p}(\Spa(K))
	\end{align}
	is naturally isomorphic to the composition
	\begin{align}\label{eq-comp2}
		\Sht(\Spd(K)) \xrightarrow{(\ref{eq-shtukastoLaurent})} \Vect(\Spf(\cO_K)_\Prism, \cO_\Prism[1/\cI_\Prism]_p^\wedge)^{\varphi=1} \xrightarrow{(\ref{eq-artinschreier})} \Loc_{\bZ_p}(\Spa(K)).
	\end{align}
%	The following diagram 2-commutes:
%	\begin{equation*}
%		\begin{tikzcd}
%			\Sht(\Spd(K)) 
%				\arrow[r, "(\ref{eq-shtukaslocsys})"] \arrow[d, "(\ref{eq-shtukastoLaurent})"]
%			& \Loc_{\bZ_p}(\Spd(K))
%				\arrow[d, "(\ref{eq-locsys})"]
%			\\ \Vect(\Spf(\cO_K)_\Prism, \cO_\Prism[1/\cI_\Prism]^\wedge_p)^{\varphi=1}
%				\arrow[r, "(\ref{eq-artinschreier})"]
%			& \Loc_{\bZ_p}(\Spa(K)).
%		\end{tikzcd}
%	\end{equation*}
\end{lemma}
\begin{proof}
	Let $\sV$ be a shtuka on $\Spd(K)$. Denote by $\bL$ and $\bL'$ the $\bZ_p$-local systems defined by (\ref{eq-comp1}) and (\ref{eq-comp2}), respectively. By pro-\'etale descent, it is enough to show that $\bL$ and $\bL'$ are isomorphic over $\Spa(C)$, compatibly with their descent data from $\Spa(\tilde{C},\tilde{C}^+)$. 
	
	Following the notation above the statement of the lemma, let $N$ denote the $\varphi$-$W(C^\flat)$-module obtained from pulling back the evaluation of $\sV$ on $\Spa(C,\cO_C)$ to $W(C^\flat)$. Then both $\bL_{\Spa(C)}$ and $\bL'_{\Spa(C)}$ are given by the finite free $\bZ_p$-module $N^{\varphi = 1}.$ Indeed, that $\bL_{\Spa(C)}$ is obtained in this way follows from the fact that the functor from $\phi$-modules over the integral Robba ring $\widetilde{\mathcal{R}}^\mathrm{int}$ to $\bZ_p$-local systems factors through the base extension functor $\widetilde{\mathcal{R}}^\mathrm{int} \to W(C^\flat)$, see \cite[Thm. 8.5.3]{KL} and \cite[Thm. 12.3.4]{SW2020}. That $\bL'_{\Spa(C)}$ is obtained this way follows from the definitions of (\ref{eq-shtukastoLaurent}) and (\ref{eq-artinschreier}), see (\ref{eq-explicitetalerealization}).
	
	It remains to check that the pro-\'etale descent data are the same. First recall the isomorphism $\beta_N$ as in (\ref{eq-descentdatum}), which defines the Laurent $F$-crystal associated to $\sV$. This is an isomorphism in the category $\Vect(W(\tilde{C}^\flat))^{\varphi = 1}$. By Artin-Schreier theory (e.g., see \cite[Prop. 3.2.7]{KL}) and tilting, $\Vect(W(\tilde{C}^\flat))^{\varphi = 1}$ is equivalent to $\Loc_{\bZ_p}(\Spa(\tilde{C}))$, and the descent datum for $\bL'$ is obtained by passing $\beta_N$ through this equivalence.
	
	On the other hand, the descent datum for $\bL$ is obtained by applying the functor $\Sht(\Spd(\tilde{C})) \to \Loc_{\bZ_p}(\Spd(\tilde{C}))$ from (\ref{eq-shtukaslocsys}) to the descent datum of shtukas $\alpha_\sV$ associated to $\sV$. Explicitly, we first obtain from $\alpha_\sV$ an isomorphism of $\phi$-modules over $\widetilde{\mathcal{R}}^\mathrm{int}_{\tilde{C}^\flat}$, which induces an isomorphism of $\bZ_p$-local systems on $\Spa(\tilde{C})$ by \cite[Thm. 8.5.3]{KL} and tilting. But once again the equivalence in \cite[Thm. 8.5.3]{KL} factors through base change along $\widetilde{\mathcal{R}}^\mathrm{int}_{\tilde{C}^\flat} \to W(\tilde{C}^\flat)$, and the isomorphism of $\phi$-modules over $W(C^\flat)$ obtained from $\alpha_{\sV}$ in this way is equal to $\beta_N$. Hence the descent datam for $\bL$ is obtained from $\beta_N$ as well, and the result follows.
\end{proof}

%There is a natural base change functor 
%\begin{align*}
%	\Sht(\Spd(\cO_K)) \to \Sht(\Spd(K)), \ (\sP, \phi_{\sP}) \mapsto (\sP_K, \phi_{\sP_K})
%\end{align*}
%given by restriction: If $(\sP, \phi_{\sP})$ is a shtuka over $\Spd(\cO_K)$, $S$ is in $\Perfk$ with an untilt given by $S^\sharp$, and $f: S^\sharp \to \Spa(K, \cO_K)$ is a morphism of adic spaces, then the value of $(\sP_K, \phi_{\sP_K})$ on $(S^\sharp, f)$ is defined to be the value of $(\sP, \phi_{\sP})$ on $(S^\sharp, S^\sharp \to \Spa(K, \cO_K) \to \Spa(\cO_K))$. 
Consider now the composition of functors
\begin{align}\label{eq-composition}
	\Rep_{\bZ_p}^\crys(\Gamma_K) \to \Sht(\Spd(\cO_K)) \to \Sht(\Spd(K)) \to \Loc_{\bZ_p}(\Spd(K)) \xrightarrow{\sim} \Rep_{\bZ_p} (\Gamma_K).
\end{align}
Here the first arrow is obtained by composing $U$ (see (\ref{eq:U})) with the functor from Definition \ref{cons-crystaltoshtuka}, the second arrow is restriction, the third arrow is induced by (\ref{eq-shtukaslocsys}), and the fourth arrow is the equivalence from Lemma \ref{lem-locsys}.

\begin{lemma} \label{lem-inclusion}
	The composition $(\ref{eq-composition})$ is naturally isomorphic to the inclusion
	\begin{align*}
		\Rep_{\bZ_p}^\crys(\Gamma_K) \hookrightarrow \Rep_{\bZ_p}(\Gamma_K). 
	\end{align*}
\end{lemma}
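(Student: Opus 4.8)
The statement asserts that chasing a crystalline $\bZ_p$-representation $\Lambda$ around the loop $(\ref{eq-composition})$ returns $\Lambda$ itself, compatibly with morphisms. The plan is to trace what happens to the underlying module and Galois action at each arrow, and in particular to exploit Remark \ref{rmk-forgetful}, which says that under the equivalence of Lemma \ref{lem-locsys} the final functor $\Loc_{\bZ_p}(\Spd K) \to \Rep_{\bZ_p}(\Gamma_K)$ is inverse to ``evaluate on $\Spa(C, \cO_C)$.'' So it suffices to compute the composite value of $(\ref{eq-composition})$ on the point $(\Spa(C,\cO_C), \iota) \in \Spd(K)(\Spa C)$ (where $\iota \colon \Spa(C,\cO_C) \to \Spa(K,\cO_K)$) and check that it recovers $\Lambda \otimes_{\bZ_p} \bZ_p = \Lambda$ as a $\Gamma_K$-module, naturally in $\Lambda$.

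First I would unwind the construction over this point. Write $S = \Spa(C^\flat, \cO_{C^\flat})$, so $S^\sharp = \Spa(C, \cO_C)$, the associated prism is $(A_{\inf}, \ker\theta) = (W(\cO_{C^\flat}), \ker\theta)$ with its structure map to $\Spf\cO_K$, and this is (a base change of) the Fontaine prism $A_{\crys}$-style object sitting over $(\Spf\cO_K)_\Prism$. Evaluating the prismatic $F$-crystal $U(\Lambda)$ on this prism gives a BKF-module over $\cO_C$, and the content of the Bhatt--Scholze étale realization — precisely the compatibility built into $(\ref{eq-artinschreier})$ and its $\cO_K$-case — is that the associated $\bZ_p$-local system on $\Spa(C,\cO_C)^\lozenge$, i.e. the $\varphi$-invariants after inverting $\cI$ and $p$-completing, recovers $T(U(\Lambda)) = \Lambda$ with its $\Gamma_K$-action. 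The remaining point is that the two routes from the BKF-module to a $\bZ_p$-local system agree: (i) pull back along $\cY_{[0,\infty)}(S) \to \Spec W(\cO_{C^\flat})$ to get a shtuka, restrict to $\Spd K$, then apply $(\ref{eq-shtukaslocsys})$ — which by definition takes $\varphi^{-1}$-invariants of the shtuka restricted to $\cY_{[0,r]}(S)$ — versus (ii) the Artin--Schreier/$\varphi$-invariants recipe directly on the prism. I would check these coincide by noting both compute, locally, the $\varphi = 1$ sections of $\cE$ after inverting $\cI_\Prism$ and $p$-completing, since the étale locus of the shtuka is cut out by exactly $\cY_{[0,r]}$ avoiding $S^\sharp$, matching $\Spec A \setminus V(I)$ after $p$-completion; this is essentially \cite[Prop. 22.6.1]{SW2020} together with the concrete description of Laurent $F$-crystals recalled before $(\ref{eq-artinschreier})$.

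Naturality is then automatic: every arrow in $(\ref{eq-composition})$ is a functor, $U$ and $T$ are mutually quasi-inverse, and the identification in Remark \ref{rmk-forgetful} is natural, so the composite isomorphism with $\Lambda$ is functorial in $\Lambda$. I would also record that the $\Gamma_K$-action is transported correctly: $\Gamma_K$ acts on $\Spa(C,\cO_C)$ over $\Spa(K,\cO_K)$, hence on the value of any pro-étale local system on $\Spd(K)$ evaluated there, and on the prism $W(\cO_{C^\flat})$ over $\Spf\cO_K$ — these two actions are intertwined by the whole construction because all maps involved are $\cO_K$-linear (resp. $\bZ_p$-linear) and $\Gamma_K$-equivariant, so the comparison with $T$ from \cite{BS21} is $\Gamma_K$-equivariant as well.

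The main obstacle I anticipate is the bookkeeping in the previous paragraph — verifying that the ad hoc "restrict to $\cY_{[0,r]}$, take $\varphi^{-1}$-equivariant torsor" functor $(\ref{eq-shtukaslocsys})$ genuinely agrees with the Artin--Schreier functor $(\ref{eq-artinschreier})$ of \cite{BS21} on the nose, rather than merely abstractly. This is not deep but requires carefully matching two different presentations of "the étale comparison of a BKF-module": one phrased over the analytic space $\cY_{[0,\infty)}(S)$ and one phrased over $\Spec W(\cO_{C^\flat})[1/\cI]^\wedge_p$. Once that identification is in hand, the rest is a diagram chase through functors that are equivalences or defined by universal properties.
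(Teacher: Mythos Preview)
Your plan is correct and follows the same route as the paper: evaluate everything at $\Spa(C,\cO_C)$, identify the underlying $\bZ_p$-module on both sides, then match the Galois actions. Two points where the paper's execution is crisper than your outline. First, your ``main obstacle'' --- reconciling the analytic recipe $(\ref{eq-shtukaslocsys})$ on $\cY_{[0,r]}(S)$ with the Artin--Schreier recipe on the prism --- is dispatched in the paper not by a direct comparison but by quoting two results that compute each side by the same formula: Fargues's theorem \cite[Thm.~14.1.1]{SW2020} shows that the shtuka-to-local-system functor applied to the BKF-module $(M,\varphi_M)$ over $\cO_C$ yields $(M \otimes_{W(\cO_C^\flat)} W(C^\flat))^{\varphi=1}$, and \cite[Thm.~5.2]{BS21} identifies the \'etale realization $T(U(L))$ with the same expression. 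Second, for the Galois action your appeal to ``all maps are $\Gamma_K$-equivariant'' is correct in spirit but is exactly the content to be verified; the paper makes it precise by identifying $\Spd(C)\times_{\Spd(K)}\Spd(C) \cong \Spd(C^0(\Gamma_K,C))$ and checking that pulling the descent datum back along $\textup{ev}_\gamma$ returns the crystal's transition isomorphism $\theta_\gamma$, which (via \cite[Rmk.~5.7]{Wu2021}) is what encodes the $\Gamma_K$-action on the \'etale realization side.
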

\begin{proof}
	By Lemma \ref{lem-commutes}, (\ref{eq-composition}) can be rewritten as
	\begin{align}\label{eq-inclusion}
		\Rep_{\bZ_p}^\crys(\Gamma_K) \to \Sht(\Spd(K)) \to \Vect(\Spf(\cO_K)_\Prism,\cO_\Prism[1/\cI_\Prism]_p^\wedge)^{\varphi=1} \xrightarrow{\sim} \Rep_{\bZ_p} (\Gamma_K).
	\end{align}
	In turn, the composition 
	\begin{align}\label{eq-shortcomposition}
		\Vect^\varphi(\Spf(\cO_K)_\Prism,\cO_\Prism) \to \Sht(\Spd(K)) \to \Vect(\Spf(\cO_K)_\Prism,\cO_\Prism[1/\cI_\Prism]_p^\wedge)^{\varphi=1}
	\end{align}
	is isomorphic to base change along $\cO_\Prism \to \cO_\Prism[1/\cI_\Prism]_p^\wedge$. But this base change composed with the final arrow in (\ref{eq-shortcomposition}) defines $T$, so (\ref{eq-composition}) is given by $T\circ U$, which is naturally isomorphic to the identity. 
\end{proof}

\begin{thm}\label{thm-fullyfaithful}
	The functor 
	\begin{align}\label{eq-reptoshtuka}
		\Rep_{\bZ_p}^\crys(\Gamma_K) \to \Sht(\Spd(\cO_K)),
	\end{align}
	defined by composing the equivalence from Theorem \ref{thm-bs} with the functor from Definition \ref{cons-crystaltoshtuka}, is fully faithful.
\end{thm}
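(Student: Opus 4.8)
The plan is to use Lemma~\ref{lem-inclusion} to reduce the whole statement to a single faithfulness assertion. Write $\iota\colon\Rep_{\bZ_p}^\crys(\Gamma_K)\hookrightarrow\Rep_{\bZ_p}(\Gamma_K)$ for the inclusion of the full subcategory, write $F$ for the functor~(\ref{eq-reptoshtuka}), and set
\begin{align*}
	\Psi\colon\Sht(\Spd(\cO_K))\to\Sht(\Spd(K))\to\Loc_{\bZ_p}(\Spd(K))\xrightarrow{\ \sim\ }\Rep_{\bZ_p}(\Gamma_K),
\end{align*}
the composite of the last three arrows of~(\ref{eq-composition}): restriction, the functor induced by~(\ref{eq-shtukaslocsys}), and the equivalence of Lemma~\ref{lem-locsys} together with Remark~\ref{rmk-forgetful}. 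Then Lemma~\ref{lem-inclusion} says exactly that $\Psi\circ F\cong\iota$. Faithfulness of $F$ is immediate: if $F(\alpha)=F(\beta)$ for $\alpha,\beta\colon L\to L'$, then $\alpha=\iota(\alpha)=\Psi F(\alpha)=\Psi F(\beta)=\iota(\beta)=\beta$, since $\iota$ is faithful.

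For fullness, fix $L,L'$ in $\Rep_{\bZ_p}^\crys(\Gamma_K)$ and a morphism $f\colon F(L)\to F(L')$ in $\Sht(\Spd(\cO_K))$. Put $\bar f:=\Psi(f)\in\Hom_{\Rep_{\bZ_p}(\Gamma_K)}(L,L')$; because $\Rep_{\bZ_p}^\crys(\Gamma_K)$ is a full subcategory, $\bar f$ is already a morphism there, so $F(\bar f)$ makes sense and $\Psi F(\bar f)=\iota(\bar f)=\bar f=\Psi(f)$. Thus $f$ and $F(\bar f)$ have the same image under $\Psi$, and it suffices to prove that $\Psi$ is faithful on $\Hom_{\Sht(\Spd(\cO_K))}(F(L),F(L'))$, for then $f=F(\bar f)$. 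Since the essential image of $F$ is precisely the class of shtukas attached to prismatic $F$-crystals (Theorem~\ref{thm-bs}, Definition~\ref{cons-crystaltoshtuka}), what is left is to show: a morphism between two such shtukas over $\Spd(\cO_K)$ is detected by its \'etale realization.

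I would prove this by descending to a geometric point. The map $\Spd(\cO_C)\to\Spd(\cO_K)$ is a $v$-cover (one lifts $\cO_K\to R^{\sharp+}$ to $\cO_C\to R^{\sharp+}$ after extracting roots $v$-locally), and shtukas satisfy $v$-descent, so restriction $\Sht(\Spd(\cO_K))\to\Sht(\Spd(\cO_C))$ is faithful; it therefore suffices to see that $\Psi$ becomes faithful after this restriction. By Definition~\ref{cons-crystaltoshtuka}, the restriction of $F(L)$ to $\Spd(\cO_C)$ is the shtuka attached to the BKF-module $(M,\varphi_M)$ over $\cO_C$ obtained by evaluating $U(L)$ on the perfect prism $(W(\cO_C^\flat),\ker\theta)$, and likewise for $L'$. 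Here I would use that the functor from BKF-modules over $\cO_C$ to shtukas over $\Spd(\cO_C)$ is fully faithful, so that a morphism of these restricted shtukas is the same as a $\varphi$-equivariant $W(\cO_C^\flat)$-linear map $M\to M'$. Since $M'$ is finite free over $W(\cO_C^\flat)$ and $W(\cO_C^\flat)\hookrightarrow W(C^\flat)$, such a map is determined by its base change along $W(\cO_C^\flat)\to W(C^\flat)$; and as $\ker\theta$ becomes the unit ideal in $W(C^\flat)$, that base change is a morphism of \'etale $\varphi$-modules, hence --- by Fargues's theorem (\cite[Thm.\ 14.1.1]{SW2020}), exactly as in the proof of Lemma~\ref{lem-inclusion} --- it is itself determined by the induced $\bZ_p$-linear map $(M\otimes_{W(\cO_C^\flat)}W(C^\flat))^{\varphi=1}\to(M'\otimes_{W(\cO_C^\flat)}W(C^\flat))^{\varphi=1}$. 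But this last map is the underlying $\bZ_p$-linear map of $\Psi(f)$, obtained by restricting $\Psi(f)$ further along $\Spd(C)\to\Spd(\cO_C)$ and forgetting the Galois action (Remark~\ref{rmk-forgetful}). Thus $\Psi(f)=0$ forces $f|_{\Spd(\cO_C)}=0$ and hence $f=0$, which gives the required faithfulness and finishes the proof.

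The formal part is routine once Lemma~\ref{lem-inclusion} is in hand; the main obstacle is the third paragraph, and in particular verifying that the functor from BKF-modules over $\cO_C$ to shtukas over $\Spd(\cO_C)$ is fully faithful. The underlying principle is that the shtukas produced by Definition~\ref{cons-crystaltoshtuka}, being glued from honest BKF-modules over the perfectoid prisms $(W(R^+),\ker\theta)$, acquire no new morphisms over $\Spd(\cO_K)$ beyond those already visible generically; this rests on $v$-descent along $\Spd(\cO_C)\to\Spd(\cO_K)$, on the comparison between shtukas over $\Spd(\cO_C)$ and BKF-modules over $\cO_C$, and on the injectivity of restriction to the generic fibre for BKF-modules over $\cO_C$. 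If one prefers to avoid the $v$-cover input, an alternative is to use the tensor structures on all the categories in question to rewrite $\Hom(L,L')$ as $\Hom(\mathbf 1,L^\vee\otimes L')$ and compute the global $\varphi$-invariant sections of $F(L^\vee\otimes L')$ directly from the prismatic $F$-crystal $U(L^\vee\otimes L')$; but the geometric-point argument seems the most transparent.
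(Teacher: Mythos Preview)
Your argument is correct and follows the same architecture as the paper's, with one genuine simplification. Both proofs rest on the elementary fact that if $G\circ F$ is fully faithful and $G$ is faithful then $F$ is fully faithful, and both verify the faithfulness of $G$ by the chain
\[
\Sht(\Spd(\cO_K))\longrightarrow\Sht(\Spd(\cO_C))\longrightarrow\Sht(\Spa(C^\flat,\cO_C^\flat))\longrightarrow\Vect(\bZ_p),
\]
using that $\Spd(\cO_C)\to\Spd(\cO_K)$ is a $v$-cover, that the middle restriction is faithful (this is \cite[Thm.~2.7.6]{PR2021}; it is equivalent to the step you flag as the ``main obstacle'', since the functor from BKF-modules over $\cO_C$ to shtukas over $\Spa(C^\flat,\cO_C^\flat)$ is an equivalence by Fargues's theorem), and Fargues's theorem for the last arrow. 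The difference lies in the choice of $G$. The paper evaluates at the perfectoid point $\Spa(K_\infty^\flat,\cO_{K_\infty}^\flat)$ and lands in $\Rep_{\bZ_p}(\Gamma_{K_\infty})$, so that $G\circ F$ is the restriction $\Rep_{\bZ_p}^{\crys}(\Gamma_K)\to\Rep_{\bZ_p}(\Gamma_{K_\infty})$, whose full faithfulness requires Kisin's theorem \cite[Cor.~2.1.14]{Kisin2006}. You instead take $G=\Psi$ with target $\Rep_{\bZ_p}(\Gamma_K)$, so that $G\circ F=\iota$ is trivially fully faithful by definition of a full subcategory. Your route therefore bypasses the appeal to \cite{Kisin2006} entirely; the detour through $K_\infty$ in the paper is not actually needed once Lemma~\ref{lem-inclusion} is available in its stated form.
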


\begin{proof}
	Fix a uniformizer $\pi$ of $\cO_K$, and let $(\pi^{1/p^n})_n$ be a compatible system of $p^n$-th roots of $\pi$ inside $\bar{K}$. Write $K_\infty$ for the perfectoid field given by the $p$-adic completion of $\bigcup_n K(\pi^{1/p^n})$. Consider the functor
	\begin{align*}
		\Rep_{\bZ_p}^\crys(\Gamma_K) \to \Sht(\Spd(\cO_K)) \to \Sht(\Spa(K_\infty^\flat, \cO_{K_\infty}^\flat)) \to \Rep_{\bZ_p}(\Gamma_{K_\infty})
	\end{align*}
	given by post-composing (\ref{eq-reptoshtuka}) with evaluation on $K_\infty$ and the functor (\ref{eq-shtukaslocsys}). By Lemma \ref{lem-inclusion}, this composition is isomorphic to the restriction functor $\Rep_{\bZ_p}^\crys(\Gamma_K) \to \Rep_{\bZ_p}(\Gamma_{K_\infty})$, so the composition is fully faithful by \cite[Cor. 2.1.14]{Kisin2006}. Hence it remains only to show the functor
	\begin{align}\label{eq-wantfaithful}
		\Sht(\Spd(\cO_K)) \to \Rep_{\bZ_p}(\Gamma_{K_\infty})
	\end{align}
	is faithful. In turn, it is enough to show the composition of (\ref{eq-wantfaithful}) with the forgetful functor $\Rep_{\bZ_p}(\Gamma_{K_\infty}) \to \Vect(\bZ_p)$ is faithful. But this composition factors as 
	\begin{align}\label{eq-factored}
		\Sht(\Spd(\cO_K)) \to \Sht(\Spd(\cO_C)) \to \Sht(\Spa(C^\flat,\cO_C^\flat)) \to \Vect(\bZ_p),
	\end{align}
	where $C$ is the completion of the algebraic closure $\bar{K}$ of $K$. The first arrow in (\ref{eq-factored}) is faithful since $\Spd(\cO_C) \to \Spd(\cO_K)$ is a $v$-cover (see \cite[Lem. 18.1.2]{SW2020}), the second is faithful by \cite[Thm. 2.7.6]{PR2021}, and the third is faithful by Fargues's Theorem \cite[Thm. 14.1.1]{SW2020}. 
\end{proof}	

\begin{rmk}
	The idea of the proof of Theorem \ref{thm-fullyfaithful} follows the ideas of \cite[Prop. 2.2.17, \href{https://arxiv.org/pdf/2106.08270v2.pdf}{arXiv v2}]{PR2021}. Note, however, that the construction given in Example 2.2.16 of \textit{loc. cit.} does not work as written, and the proposition was removed from subsequent versions.
\end{rmk}

\begin{rmk}
	It is not hard to see that the functor (\ref{eq-reptoshtuka}) is faithful, since $U$ is fully faithful and the functor from Definition \ref{cons-crystaltoshtuka} is essentially given by restriction to perfectoid objects. The difficulty lies in proving it is full. Indeed, a priori one needs descent data to recover the morphism of prismatic $F$-crystals from the morphism of shtukas, but $\cO_C \widehat{\otimes}_{\cO_K} \cO_C$ is only quasiregular semiperfectoid, not perfectoid.
\end{rmk}

\begin{cor}\label{cor-fullyfaithful}
	The functor 
	\begin{align*}
		\Vect^\varphi((\Spd(\cO_K))_\Prism, \cO_\Prism) \to \Sht(\Spd(\cO_K))
	\end{align*} 
	from Definition \ref{cons-crystaltoshtuka} is fully faithful. \hfill \qed
\end{cor}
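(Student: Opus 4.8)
The goal is to deduce Corollary~\ref{cor-fullyfaithful} from Theorem~\ref{thm-fullyfaithful} and the Bhatt--Scholze equivalence (Theorem~\ref{thm-bs}). The plan is to observe that the functor in question is, by Definition~\ref{cons-crystaltoshtuka}, \emph{defined} as a composite, and that this composite factors through the one already treated in Theorem~\ref{thm-fullyfaithful}.

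First I would recall the setup: applied to $\cX = \Spf\cO_K$, Definition~\ref{cons-crystaltoshtuka} produces a functor
\[
\Vect^\varphi((\Spf\cO_K)_\Prism, \cO_\Prism) \to \Sht(\Spd(\cO_K)),
\]
and note that $(\Spd(\cO_K))_\Prism$ should be understood as $(\Spf\cO_K)_\Prism$ — i.e.\ the notation in the corollary refers to the absolute prismatic site of the formal scheme $\Spf\cO_K$, consistently with the rest of Section~\ref{section-prismatic}. Then I would invoke Theorem~\ref{thm-bs}: the \'etale realization $T$ is an equivalence of categories $\Vect^\varphi((\Spf\cO_K)_\Prism, \cO_\Prism) \xrightarrow{\sim} \Rep_{\bZ_p}^\crys(\Gamma_K)$, with quasi-inverse $U$. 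By the very definition of the functor (\ref{eq-reptoshtuka}) in Theorem~\ref{thm-fullyfaithful}, the diagram
\[
\begin{tikzcd}
\Vect^\varphi((\Spf\cO_K)_\Prism, \cO_\Prism) \arrow[r] \arrow[d, "T"'] & \Sht(\Spd(\cO_K)) \\
\Rep_{\bZ_p}^\crys(\Gamma_K) \arrow[ur] &
\end{tikzcd}
\]
commutes up to natural isomorphism, where the top arrow is the functor of Definition~\ref{cons-crystaltoshtuka} and the diagonal arrow is (\ref{eq-reptoshtuka}). (Indeed (\ref{eq-reptoshtuka}) is defined precisely as the composite of $U$ with the top arrow, so the top arrow equals the diagonal precomposed with $T = U^{-1}$.)

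The conclusion is then immediate: the top arrow is the composite of the equivalence $T$ with the fully faithful functor (\ref{eq-reptoshtuka}) of Theorem~\ref{thm-fullyfaithful}, and a composite of a fully faithful functor with an equivalence is fully faithful. Since there is genuinely no computational content here — it is a formal consequence of the two cited results plus the definition — I do not anticipate a real obstacle; the only point requiring care is making sure the identification of the prismatic site in the corollary's statement with $(\Spf\cO_K)_\Prism$ is the intended one, and that the functor named in the corollary is literally the one from Definition~\ref{cons-crystaltoshtuka} specialized to $X = \Spec\cO_K$, so that the factorization through Theorem~\ref{thm-fullyfaithful} is on the nose rather than merely up to some further comparison. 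Given that, the proof is a single sentence.
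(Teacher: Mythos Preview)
Your proposal is correct and matches the paper's approach: the corollary is stated with a \qed and no proof, precisely because it is the one-line formal consequence you describe---the functor of Definition~\ref{cons-crystaltoshtuka} composed with the equivalence $U$ is the fully faithful functor of Theorem~\ref{thm-fullyfaithful}, so precomposing the latter with $T$ recovers the former as fully faithful. Your observation that $(\Spd(\cO_K))_\Prism$ in the statement should be read as $(\Spf\cO_K)_\Prism$ is also correct; this is a typographical slip in the paper.
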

%\begin{proof}
%	Combine Theorem \ref{thm-bs} with Theorem \ref{thm-fullyfaithful}.
%\end{proof}

\subsection{$\cG(\bZ_p)$-valued crystalline representations and $\cG$-shtukas}
In this section we define a Tannakian variant of the construction from the previous section. Let $K$ and $\cO_K$ be as in the previous section, and assume moreover that the residue field $k$ of $K$ is either finite or algebraically closed. Let $\cG$ be a smooth group scheme over $\bZ_p$ with connected fibers and generic fiber $G = \cG\otimes_{\bZ_p} \bQ_p$. 
\begin{Def}
	A \textit{$\cG$-valued crystalline representation of $\Gamma_K$} is an exact $\bZ_p$-linear tensor functor 
	\begin{align*}
		\alpha: \Rep_{\bZ_p}(\cG) \to \Rep_{\bZ_p}^\crys(\Gamma_K).
	\end{align*}
\end{Def}

We denote the category of $\cG$-valued crystalline representations of $\Gamma_K$ by $\cG$-$\Rep^\crys_{\bZ_p}(\Gamma_K)$.

\begin{rmk}\label{rmk-forget}
	Let $\alpha$ be a $\cG$-valued crystalline representation of $\Gamma_K$. Then by Lemma \ref{lem-tannakiantorsors}, the composition
	\begin{align*}
		\Rep_{\bZ_p}(\cG) \xrightarrow{\alpha} \Rep_{\bZ_p}^\crys(\Gamma_K) \hookrightarrow \Rep_{\bZ_p}(\Gamma_K) \xrightarrow{\sim} \Loc_{\bZ_p}(\Spd K)
	\end{align*}
	is pro-\'etale locally trivial. It follows from this and Remark \ref{rmk-forgetful} that the composition of $\alpha$ with the forgetful functor $\Rep_{\bZ_p}^\crys(\Gamma_K) \to \Vect(\bZ_p)$ yields the forgetful functor $\Rep_{\bZ_p}(\cG) \to \Vect(\bZ_p)$.
\end{rmk}

 %Following \cite[\textsection 4.4.4]{KSZ}, we say $\cG$ \textit{satisfies property KL} if all $\cG$-torsors on $U$ are trivial. In particular, by \cite[Cor. 1.2]{Anschutz22}, all parahoric group schemes $\cG$ satisfy KL. 

%Let $\cG$ be a flat affine group scheme of finite type over $\bZ_p$, and let $\Rep_{\bZ_p}(\cG)$ denote the category of algebraic representations of $\cG$ on finite free $\bZ_p$-modules. To any $\cG(\bZ_p)$-valued representation of $\Gamma_K$ we can associate an exact $\bZ_p$-linear tensor functor
%\begin{align*}
%	\alpha: \Rep_{\bZ_p}(\cG) \to \Rep_{\bZ_p}(\Gamma_K)
%\end{align*}
%by assigning to any representation $(\Lambda, \rho)$ of $\cG$ the $\Gamma_K$-representation obtained by the composition $\rho \circ \alpha$.

The following is the main result in this section. Let $D^\times = \Spec(W(k))\llbracket u \rrbracket \setminus \{(p,u)\}$. In the following proposition we will assume that all $\cG$-torsors over $D^\times$ are trivial. This is the case when $\cG$ is parahoric by Ansch\"utz's Theorem, \cite[Cor 1.2]{Anschutz22}.

\begin{prop}\label{prop-exacttensor}
	Suppose $\cG$ is a smooth group scheme over $\bZ_p$ with connected fibers and with the property that every $\cG$-torsor over $D^\times$ is trivial. Then for every $\cG$-valued crystalline representation $\alpha$ of $\Gamma_K$, the functor
	\begin{align*}
		U\circ \alpha: \Rep_{\bZ_p}(\cG) \to \Vect^\varphi(\Spf(\cO_K)_\Prism, \cO_\Prism)
	\end{align*}
	is an exact $\bZ_p$-linear tensor functor.
\end{prop}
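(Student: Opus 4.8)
The plan is to show that $\omega^\alpha_A$ is an exact tensor functor by factoring it through the $\Vect$-valued fiber functor and a torsor-theoretic argument. First, I would observe that $\omega^\alpha_A$ is manifestly a tensor functor: the composite $\alpha$ is a tensor functor by hypothesis, $U$ is a tensor functor by Theorem~\ref{thm-bs}, and evaluation of a prismatic $F$-crystal on a fixed prism $(A,I)$ followed by the forgetful functor $\Vect^\varphi(A) \to \Vect(A)$ is compatible with tensor products since tensor products of vector bundles are computed termwise. So the content of the proposition is exactness. Since the target $\Vect(A)$ need not be an abelian category, by "exact" I understand that $\omega^\alpha_A$ carries short exact sequences of representations (sequences of underlying $\bZ_p$-modules that are exact, compatibly with the $\cG$-action) to short exact sequences of $A$-modules whose terms are finite projective.

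Next I would reduce exactness to a faithful-flatness/triviality statement. The key structural fact is that, by the Tannakian formalism (as in Lemma~\ref{lem-tannakiantorsors} and the arguments of \cite{SW2020} and \cite{BS21}), an exact tensor functor out of $\Rep_{\bZ_p}(\cG)$ into $\Vect(A)$ whose values have the correct ranks is the same data as a $\cG$-torsor over $\Spec A$; conversely, an arbitrary tensor functor compatible with the $\Vect(\bZ_p)$-fiber functor that is \emph{not} known to be exact corresponds to something weaker, and exactness is precisely what is needed to promote it to a torsor. The strategy is therefore: produce the $\cG$-torsor directly and then read off exactness. Concretely, I would first treat the case where $A$ is $W(k)\llbracket u\rrbracket$ with $I = (E(u))$ a Breuil–Kisin prism (or, after $p$-completely faithfully flat base change, reduce a general bounded prism over $\Spf\cO_K$ to this case using that the absolute prismatic site has enough such "large" prisms and that $\cX_\Prism$-covers induce faithfully flat maps on the relevant quotients, cf.\ Remark~\ref{rmk-ff}). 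On such $A$, $\Spec A \setminus V(I)$ contains $D^\times$ as an open, and a prismatic $F$-crystal restricted there is a $\varphi$-module; the point is that $\alpha$, being valued in \emph{crystalline} representations, has associated Breuil–Kisin $\cG$-modules whose underlying $\cG$-bundle over $\Spec A$ is obtained by gluing a bundle on $D^\times$ (where it is trivial by the hypothesis on $\cG$) with a bundle near $V(p,u)$, and the exactness of $\omega^\alpha_A$ on $\Spec A$ follows because the restriction functor $\Vect(A) \to \Vect(D^\times) \times \Vect(A_{(p,u)}^\wedge)$ is exact and conservative, while on each factor exactness of the corresponding composite is already known (on $D^\times$ from the triviality of $\cG$-torsors and the identification with étale $\varphi$-modules / local systems, which are exact by Lemma~\ref{lem-locsys}; near $V(p,u)$ from the crystalline condition and \cite{BS21}).

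The main obstacle I expect is the following: exactness of a tensor functor into a non-abelian category like $\Vect(A)$ is genuinely stronger than exactness "after forgetting to $\bZ_p$-modules", and the naive argument that $A$ is faithfully flat over $\bZ_p$ does not apply because $A$ is not $\bZ_p$-flat for a general prism (e.g. $A/I$ can have $p$-torsion phenomena, and $W(R^+)$ is typically not $\bZ_p$-flat in the perfectoid case). So the real work is controlling the base change behavior of the vector bundles $\omega^\alpha_A(V)$: one must show that for a short exact sequence $0 \to V' \to V \to V'' \to 0$ in $\Rep_{\bZ_p}(\cG)$, the sequence $0 \to \omega^\alpha_A(V') \to \omega^\alpha_A(V) \to \omega^\alpha_A(V'') \to 0$ stays exact with $\omega^\alpha_A(V'')$ finite projective over $A$. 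I would handle this by exploiting that $U$ lands in \emph{prismatic $F$-crystals}, which by \cite[Prop.~2.7]{BS21} are honest crystals of vector bundles, hence compatible with the transition maps; combined with the triviality hypothesis on $D^\times$, one reduces to checking exactness over the two pieces of the cover $\{D^\times,\ \Spf A/(p,u)\text{-neighborhood}\}$, where everything is either étale-local-system theory or the local crystalline theory already established in the excerpt. The finite projectivity of the output is then automatic because a $\cG$-torsor is fppf-locally trivial and $\cG$ acts through $\GL$ on finite projective modules. I would close the argument by invoking the Tannakian reconstruction to package the exact tensor functor $\omega^\alpha_A$ as a $\cG$-torsor over $\Spec A$, which is what the subsequent sections need, though for the statement as written it suffices to have verified exactness and the tensor property.
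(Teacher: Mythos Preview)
Your overall architecture matches the paper's: first verify the tensor property (easy), then prove exactness for the Breuil--Kisin prism $\mathfrak{S}=W(k)\llbracket u\rrbracket$, then descend to a general prism $(A,I)$ by passing to a faithfully flat cover $(B,IB)$ that receives a map from $(\mathfrak{S},(E(u)))$ and using the crystal property plus $(p,I)$-completely faithfully flat descent (Remark~\ref{rmk-ff}). That part of your plan is fine.

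The gap is in your treatment of the Breuil--Kisin case. Your proposed strategy ``produce the $\cG$-torsor directly and then read off exactness'' is circular: the Tannakian formalism you invoke needs exactness as input to produce a torsor, not the other way around. Your fallback is to check exactness on a ``cover'' $\{D^\times,\ \text{neighborhood of }(p,u)\}$, but neither half is justified. On $D^\times$ you claim exactness follows from ``the triviality of $\cG$-torsors and the identification with \'etale $\varphi$-modules / local systems, which are exact by Lemma~\ref{lem-locsys}''; but Lemma~\ref{lem-locsys} is about comparing $\Loc_{\bZ_p}(X)$ with $\Loc_{\bZ_p}(X^\lozenge)$ for analytic adic $X$ and has nothing to do with exactness of a functor $\Rep_{\bZ_p}(\cG)\to\Vect(D^\times)$. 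Near the closed point your appeal to ``the crystalline condition and \cite{BS21}'' is too vague to constitute an argument. The paper's actual mechanism is different: it identifies the composite $\Rep_{\bZ_p}^{\crys}(\Gamma_K)\to\Vect^\varphi(\mathfrak{S})$ with Kisin's functor $\fM$ via \cite[Rmk.~7.11]{BS21}, and then invokes \cite[Lem.~4.4.5]{KSZ}, whose proof uses Kisin's structural results on $\fM$ (in particular exactness after inverting $p$) together with the hypothesis that $\cG$-torsors on $D^\times$ are trivial to force exactness over all of $\Spec\mathfrak{S}$. You are using the $D^\times$-triviality hypothesis at roughly the right place, but you are missing the Kisin-theoretic input that makes it bite; without that, your argument over $\mathfrak{S}$ does not go through.
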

\begin{proof}
	Let $\alpha$ be a $\cG$-valued crystalline representation of $\Gamma_K$, and suppose $(A,I)$ is a prism over $\Spf(\cO_K)$. Define $\omega^\alpha_A$ to be the composition of functors
	\begin{align}\label{eq-omegaA}
		\omega^\alpha_A: \Rep_{\bZ_p}(\cG) \xrightarrow{\alpha} \Rep_{\bZ_p}^\crys(\Gamma_K) \xrightarrow{U} \Vect^\varphi(\Spf(\cO_K)_\Prism, \cO_\Prism) \to \Vect(A),
	\end{align}
	where the final arrow is given by the composition of evaluation on $((A,I),\Spf(A/I) \to \Spf(\cO_K))$ and the forgetful functor $\Vect^\varphi(A) \to \Vect(A)$. It is enough to prove that $\omega^\alpha_A$ is an exact $\bZ_p$-linear tensor functor for every prism $(A,I)$ over $\Spf(\cO_K)$.
	
	By Theorem \ref{thm-bs}, to show that $\omega^\alpha_A$ is a tensor functor, we need only show that evaluation $\Vect(\Spf(\cO_K)_\Prism, \cO_\Prism) \to \Vect(A)$ is a tensor functor. In general, if $\cE_1$ and $\cE_2$ are two prismatic crystals, then their tensor product $\cE_1 \otimes_{\cO_\Prism} \cE_2$ is the sheaf associated to the presheaf $(B,J) \mapsto \cE_1(B) \otimes_B \cE_2(B)$. But this is already a sheaf for the faithfully flat topology on prisms by the crystal properties of $\cE_1$ and $\cE_2$ combined with the sheaf property of $\cO_\Prism$ and the flatness of $\cE_i(B)$.
	
	It remains to show that $\omega^\alpha_A$ is exact. We first show this for the Breuil-Kisin prism. Let $\fS = W(k)\llbracket u \rrbracket$. By choosing a uniformizer $\pi \in \cO_K$ we obtain a surjection $\theta_K: \fS \to \cO_K$ by sending $u$ to $\pi$, and the kernel of this map is generated by an Eisenstein polynomial $E(u)$. The ring $\fS$ becomes a $\delta$-ring in the sense of \cite{BS22} when we extend the Witt vector Frobenius to $\fS$ by $u \mapsto u^p$, and $(\fS, (E(u)))$ defines a bounded prism (called the \textit{Breuil-Kisin prism} in \cite{BS21}). Moreover, the surjection $\theta_K$ endows $(\fS, (E(u)))$ with the structure of prism over $\Spf(\cO_K)$. We claim the tensor functor $\omega^\alpha_\fS: \Rep_{\bZ_p}(\cG) \to \Vect(\fS)$ is exact.
	
	By \cite[Rmk. 7.11]{BS21}, the functor 
	\begin{align*}
		\Rep_{\bZ_p}^\crys(\Gamma_K) \to \Vect^\varphi(\Spf(\cO_K)_\Prism, \cO_\Prism) \to \Vect^\varphi(\fS)
	\end{align*}
	is naturally isomorphic to the functor $\fM$ defined by Kisin \cite[Thm. 1.2.1]{Kisin2010}. Since all $\cG$-torsors on $D^\times$ are trivial, the composition
	\begin{align*}
		\Rep_{\bZ_p}(\cG) \xrightarrow{\alpha} \Rep_{\bZ_p}^\crys(\Gamma_K) \xrightarrow{\fM} \Vect^\varphi(\fS) \to \Vect(\fS),
	\end{align*}
	which is isomorphic to $\omega^\alpha_{\fS}$, is exact by \cite[Lem. 4.4.5]{KSZ} (note that the proof in \textit{loc. cit.} is in the case where $k$ is finite, but the same argument goes through when $k$ is algebraically closed). Therefore $\omega^\alpha_{\fS}$ is exact.
	
	Now let $(A,I)$ be an arbitrary bounded prism over $\Spf(\cO_K)$. By \cite[Ex. 2.6 (1)]{BS21}, there exists a faithfully flat map of prisms $(A,I) \to (B,IB)$ for which there is a map $(\fS, (E(u))) \to (B, IB)$ in $\Spf(\cO_K)_\Prism$. By the crystal property, the functor $\omega^\alpha_B$ is naturally isomorphic to the functor
	\begin{align*}
		\Rep_{\bZ_p}(\cG)  \to \Vect(B), \ (\Lambda,\rho)\mapsto \omega^\alpha_{\fS}(\Lambda) \otimes_{\fS} B,
	\end{align*}
	and therefore $\omega^\alpha_B$ is exact as well. Moreover, the crystal property applied again implies that $\omega^\alpha_B$ is naturally isomorphic to the functor $(\Lambda, \rho) \mapsto \omega^\alpha_A(\Lambda) \otimes_A B$, and therefore we can conclude that $\omega_A^\alpha$ is exact by adically flat descent (see e.g., \cite[Prop. 6.1.11, Chapter I]{FujiwaraKato}).
	
%	Now suppose $0 \to \Lambda' \to \Lambda \to \Lambda'' \to 0$ is exact in $\Rep_{\bZ_p}(\cG)$. Then the sequence
%	\begin{align*}
%		0 \to \omega^\alpha_A(\Lambda') \otimes_A B \to \omega^\alpha_A(\Lambda) \otimes_A B \to \omega^\alpha_A(\Lambda'')\otimes_A B \to 0
%	\end{align*}
%	is exact in $\Vect(B)$. It follows that the sequence
%	\begin{align*}
%		0 \to \omega^\alpha_A(\Lambda')\otimes_A B/(p,I)^nB \to \omega^\alpha_A(\Lambda) \otimes_A B/(p,I)^nB \to \omega^\alpha_A(\Lambda'') \otimes_A B/(p,I)^nB \to 0
%	\end{align*}
%	is exact for all $n$. By $(p,I)$-faithful flatness of $(A,I) \to (B,IB)$, the maps $A/(p,I)^n \to B/(p,I)^nB$ are faithfully flat for all $n$ (see Remark \ref{rmk-ff}), so the sequence
%	\begin{align*}
%		0 \to \omega^\alpha_A(\Lambda')\otimes_A A/(p,I)^n \to \omega^\alpha_A(\Lambda) \otimes_A A/(p,I)^n \to \omega^\alpha_A(\Lambda'') \otimes_A A/(p,I)^n \to 0
%	\end{align*}
%	is exact for all $n$. But $\omega^\alpha_A(\Lambda)$ is finite projective for all $\Lambda$, and therefore classically $(p,I)$-complete as well (since $A$ is). Hence we can pass to the limit to obtain the desired exactness.
\end{proof}

For any tensor functors $\omega: \Rep_{\bZ_p} (\cG) \to \Vect(A)$, denote by $\omega \otimes_A B$ the functor given by post-composition with the base change $\Vect(A) \to \Vect(B)$. If $\omega_1$ and $\omega_2$ are two tensor functors $\Rep_{\bZ_p}(\cG) \to \Vect(A)$, let $\ul{\Isom}^\otimes(\omega_1, \omega_2)$ be the fpqc sheaf on $\Spec(A)$ which assigns to any $A$-algebra $B$ the set of isomorphisms of tensor functors $\omega_1 \otimes_A B \xrightarrow{\sim} \omega_1 \otimes_A B$. Write $\ul{\Aut}^\otimes(\omega)$ for $\ul{\Isom}^\otimes(\omega,\omega)$. 

For any $\bZ_p$-algebra $A$, define the standard fiber functor by
\begin{align*}
	\mathbbm{1}_A: \Rep_{\bZ_p} (\cG) \to \Vect(A), \ (\Lambda, \rho) \mapsto \Lambda \otimes_{\bZ_p} A.
\end{align*}
By the reconstruction theorem in Tannakian duality (see \cite[Thm. 5.17]{Wedhorn2004} for the statement in this situation), the canonical morphism $\cG \to \ul{\Aut}^\otimes(\mathbbm{1}_{\bZ_p})$ is an isomorphism of fpqc sheaves. As a result, $\ul{\Isom}^\otimes(\mathbbm{1}_A, \omega)$ carries a natural right $\cG_A$-action for any tensor functor $\omega: \Rep_{\bZ_p}(\cG) \to \Vect(A)$. It follows from the Tannakian formalism \cite[Thm. 19.5.1]{SW2020} and Proposition \ref{prop-exacttensor} that when $\omega= \omega^\alpha_A$ is defined as in (\ref{eq-omegaA}), $\ul{\Isom}^\otimes(\mathbbm{1},\omega^\alpha_A)$ is a $\cG$-torsor over $\Spec(A)$, which we denote by $\sP^\alpha_A$.

The Frobenius morphisms for $(\alpha \circ U)(\Lambda, \rho)$ as $(\Lambda, \rho)$ varies piece together to define an isomorphism of $\cG$-torsors
\begin{align*}
	\phi_{\sP^\alpha_A}: \varphi_A^\ast (\sP^\alpha_A) \res_{\Spec(A) \setminus V(I)} \xrightarrow{\sim} \sP^\alpha_A\res_{\Spec(A) \setminus V(I)}.
\end{align*}
over $\Spec(A) \setminus V(I)$. In particular, if $S = \Spa(R,R^+)$ is an affinoid perfectoid space over $k$, with untilt $S^\sharp = \Spa(R^\sharp, R^{\sharp+})$ admitting a homomorphism $\cO_K \to R^{\sharp+}$, this construction determines a $\cG$-BKF-module $(\sP^\alpha_{W(R^+)}, \phi_{\sP^\alpha_{W(R^+)}})$ over $S$ with one leg at $S^\sharp$. Pulling back along 
\begin{align*}
	S \bdtimes \bZ_p \to \Spa(W(R^+)) \to \Spec(W(R^+)),
\end{align*}
we obtain a $\cG$-shtuka over $S$ with one leg at $S^\sharp$, which we denote by $(\sP^\alpha_S, \phi_{\sP^\alpha_S})$. It follows that the assignment
\begin{align}\label{eq-reptoGshtuka}
	(S^\sharp, f) \in (\Spd (\cO_K))(S) \mapsto (\sP^\alpha_S, \phi_{\sP^\alpha_S})
\end{align}
defines a shtuka over $\Spd(\cO_K)$.

\begin{Def}\label{def-greptoshtuka}
	Let $\alpha$ be a $\cG$-valued crystalline representation of $\Gamma_K$. The shtuka $(\sP^\alpha, \phi_{\sP^\alpha})$ over $\Spd(\cO_K)$ given by the assignment (\ref{eq-reptoGshtuka}) is the \textit{$\cG$-shtuka associated $\alpha$}.
\end{Def}

The assignment $\alpha \mapsto (\sP^\alpha, \phi_{\sP^\alpha})$ satisfies the following obvious functorialities.
\begin{lemma}\label{lem-functorialities}
	Let $\cG$ be a parahoric group scheme over $\bZ_p$, and let $\alpha$ be a $\cG$-valued crystalline representation of $\Gamma_K$.
	\begin{enumerate}[$(a)$]
		\item Suppose $\rho: \cG \to \cG'$ is a morphism of parahoric group schemes over $\bZ_p$. Then 
		\begin{align*}
			\sP^{\rho^\ast(\alpha)} \cong \sP^\alpha \times^{\cG} \cG',
		\end{align*}
		where $\rho^\ast(\alpha)$ is the $\cG'$-valued representation of $\cG'$ induced via $\rho$ by $\alpha$.
		\item Let $K'$ be an extension of $K$ inside of $\bar{K}$, and let $\alpha'$ be the restriction of $\alpha$ to $\Gamma_{K'}$. Then 
		\begin{align*}
			\sP^\alpha \res_{\Spd(\cO_{K'})} \cong \sP^{\alpha'}.
		\end{align*}
	\end{enumerate}
\end{lemma}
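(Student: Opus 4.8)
The plan is to verify both isomorphisms one prism at a time. Recall from (\ref{eq-omegaA}) and Definition \ref{def-greptoshtuka} that $\sP^\alpha$ is manufactured by attaching to each bounded prism $(A,I)$ over $\Spf\cO_K$ the tensor functor $\omega^\alpha_A$, forming the $\cG$-torsor $\sP^\alpha_A = \ul{\Isom}^\otimes(\mathbbm{1}_A,\omega^\alpha_A)$ with its Frobenius, specializing to the Breuil--Kisin--Fargues prism $(W(R^+),\ker\theta)$ of an untilt, and pulling back along $\cY_{[0,\infty)}(S)\to\Spec W(R^+)$. Since the formation of $\ul{\Isom}^\otimes$-torsors from exact tensor functors, the pushout $(-)\times^{\cG}\cG'$, and pullback of torsors along maps of (locally) ringed spaces all commute with one another and with ring base change, it suffices in each case to exhibit the asserted isomorphism at the level of the torsors-with-Frobenius $\sP^\bullet_A$ over $\Spec A$, compatibly with the prismatic transition maps, and then transport it through (\ref{eq-reptoGshtuka}).

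For part $(a)$, note first that $\rho^\ast(\alpha)=\alpha\circ\rho^\ast$, where $\rho^\ast\colon\Rep_{\bZ_p}(\cG')\to\Rep_{\bZ_p}(\cG)$ is restriction along $\rho$, so directly from (\ref{eq-omegaA}) one has $\omega^{\rho^\ast(\alpha)}_A=\omega^\alpha_A\circ\rho^\ast$; because $\cG$ and $\cG'$ are parahoric, Proposition \ref{prop-exacttensor} ensures both $\omega^\alpha_A$ and $\omega^{\rho^\ast(\alpha)}_A$ are exact tensor, so $\sP^\alpha_A$ and $\sP^{\rho^\ast(\alpha)}_A$ are honest torsors. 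I would then invoke the standard Tannakian identity $\ul{\Isom}^\otimes(\mathbbm{1}_A,\omega\circ\rho^\ast)\cong\ul{\Isom}^\otimes(\mathbbm{1}_A,\omega)\times^{\cG_A}\cG'_A$, valid for any exact tensor functor $\omega\colon\Rep_{\bZ_p}(\cG)\to\Vect(A)$ over the forgetful functor: the natural map ``restrict an isomorphism of fibre functors along $\rho^\ast$'' is $\cG_A$-equivariant and, checked fppf-locally on $\Spec A$ where $\omega$ trivializes, reduces to $\cG_A\times^{\cG_A}\cG'_A=\cG'_A$ (compare \cite[Thm. 19.5.1]{SW2020}, \cite[\textsection 3.5.4]{Cornut20}, \cite[Thm. 5.17]{Wedhorn2004}). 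One then checks that this isomorphism is compatible with reindexing representations along $\rho^\ast$ --- which is precisely how the Frobenius on $\sP^{\rho^\ast(\alpha)}_A$ is derived from that on $\sP^\alpha_A$ --- and with the prismatic transition maps; pulling back to $S\bdtimes\bZ_p$ and letting $S$ vary yields $\sP^{\rho^\ast(\alpha)}\cong\sP^\alpha\times^{\cG}\cG'$.

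For part $(b)$ the essential input is the functoriality of the Bhatt--Scholze correspondence in the base formal scheme. The \'etale realization factors as passage to Laurent $F$-crystals followed by the Artin--Schreier equivalence (\ref{eq-artinschreier}), and both steps are natural in $\cX$ (\cite[Cor. 3.8]{BS21}); hence for the structure morphism $f\colon\Spf\cO_{K'}\to\Spf\cO_K$ the square relating $T_{\cO_K}$, $T_{\cO_{K'}}$, pullback $f^\ast$ of prismatic $F$-crystals, and restriction $\Rep_{\bZ_p}(\Gamma_K)\to\Rep_{\bZ_p}(\Gamma_{K'})$ commutes. Because $K'/K$ is unramified, $\cO_{K'}$ still has perfect residue field (so $\alpha'$ is again a $\cG$-valued crystalline representation and $\sP^{\alpha'}$ is defined) and the restriction to $\Gamma_{K'}$ of a crystalline representation of $\Gamma_K$ is crystalline, so for $V\in\Rep^\crys_{\bZ_p}(\Gamma_K)$ the $F$-crystal $f^\ast U(V)$ over $(\Spf\cO_{K'})_\Prism$ has \'etale realization $V|_{\Gamma_{K'}}$ and therefore equals $U(V|_{\Gamma_{K'}})$ by Theorem \ref{thm-bs} applied over $\cO_{K'}$. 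Now any bounded prism $(A,I)$ over $\Spf\cO_{K'}$ is, via $f$, canonically a prism over $\Spf\cO_K$, and the displayed identity gives $\omega^{\alpha'}_A=\omega^\alpha_A$ as functors $\Rep_{\bZ_p}(\cG)\to\Vect(A)$, hence $\sP^{\alpha'}_A=\sP^\alpha_A$ with matching Frobenii and transition data. Finally a point $(S^\sharp,g)\in\Spd(\cO_{K'})(S)$ composes with $f$ to a point of $\Spd(\cO_K)(S)$ with the same underlying untilt, so both constructions use the very same prism $(W(R^+),\ker\theta)$ and produce the same $\cG$-shtuka over $S$; letting $S$ vary gives $\sP^\alpha\res_{\Spd(\cO_{K'})}\cong\sP^{\alpha'}$.

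I expect the only step needing real care to be the functoriality in $(b)$ --- that $U$ commutes with pullback along $f$ --- which rests on naturality of (\ref{eq-artinschreier}) in $\cX$ together with the fact that $T$ restricted to prismatic $F$-crystals is an equivalence onto crystalline representations over both $\cO_K$ and $\cO_{K'}$. Once this is in hand, everything else is formal, following from the Tannakian formalism used to define $\sP^\bullet_A$ and the base-change compatibility of the construction in Definition \ref{def-greptoshtuka}.
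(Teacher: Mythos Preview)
The paper's own proof is simply ``Left to the reader,'' so there is nothing to compare against; your proposal is a correct and natural way to supply the omitted details. The Tannakian pushout identity in (a) and the base-change compatibility of the Bhatt--Scholze equivalence in (b) are exactly the points one must check, and your outline handles both.
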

\begin{proof}
	The proof is straightforward and left to the reader.
\end{proof}

We close this section with the group theoretic analog of Theorem \ref{thm-fullyfaithful}.

\begin{thm}\label{technicalthm}
	The functor
	\begin{align*}
		\cG\textup{-}\Rep_{\bZ_p}^\crys(\Gamma_K) \to \Sht_\cG(\Spd(\cO_K))
	\end{align*}
	from Definition \ref{def-greptoshtuka} is fully faithful.
\end{thm}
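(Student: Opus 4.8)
The plan is to reduce Theorem~\ref{technicalthm} to its vector-bundle avatar, Theorem~\ref{thm-fullyfaithful}, by passing to the Tannakian side on both ends, whereupon the assertion becomes the formal statement that post-composition with a fully faithful functor is fully faithful on categories of exact tensor functors. First I would record the relevant Tannakian dictionaries. On the source side, a morphism $\alpha \to \beta$ in $\cG\textup{-}\Rep_{\bZ_p}^\crys(\Gamma_K)$ is by definition a tensor-compatible natural transformation of the underlying functors $\Rep_{\bZ_p}(\cG) \to \Rep_{\bZ_p}^\crys(\Gamma_K)$, and since $\Rep_{\bZ_p}(\cG)$ is a rigid tensor category every such transformation is invertible, so this category is a groupoid. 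On the target side, a morphism of $\cG$-shtukas over $\Spd(\cO_K)$ is a $\cG$-equivariant morphism of the underlying $\cG$-torsors over $S \bdtimes \Spa(\bZ_p)$, compatible in $S$ and commuting with the Frobenii, hence again an isomorphism. Applying the Tannakian formalism for $\cG$-torsors (\cite[Thm.~19.5.1]{SW2020}, over each $S \bdtimes \Spa(\bZ_p)$ and patched together), the assignment $\sP \mapsto \omega_\sP$ with $\omega_\sP(\Lambda,\rho) = \sP \times^{\cG} \GL(\Lambda)$ identifies $\Sht_\cG(\Spd(\cO_K))$ with the category of exact tensor functors $\Rep_{\bZ_p}(\cG) \to \Sht(\Spd(\cO_K))$, morphisms of $\cG$-shtukas corresponding to tensor-compatible isomorphisms of the associated functors; here the fact that a component commutes with Frobenius is precisely the condition that it be a morphism of vector bundle shtukas.

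Next I would identify the functor $\alpha \mapsto \sP^\alpha$ of Definition~\ref{def-greptoshtuka} under these two equivalences. Unwinding the construction $\sP^\alpha_A = \ul{\Isom}^\otimes(\mathbbm{1}, \omega^\alpha_A)$ and using the standard fact that pushing a Tannakian torsor forward along $\rho \colon \cG \to \GL(\Lambda)$ produces the frame bundle of $\omega^\alpha_A(\Lambda)$, one finds that $\omega_{\sP^\alpha}(\Lambda,\rho)$ is exactly the vector bundle shtuka associated (Definition~\ref{cons-crystaltoshtuka}) to the prismatic $F$-crystal $U(\alpha(\Lambda,\rho))$, compatibly with Frobenius. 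In other words $\omega_{\sP^\alpha} = F \circ \alpha$, where $F \colon \Rep_{\bZ_p}^\crys(\Gamma_K) \to \Sht(\Spd(\cO_K))$ is the functor of Theorem~\ref{thm-fullyfaithful}. Thus, modulo the two Tannakian equivalences, the functor of Theorem~\ref{technicalthm} is simply post-composition with $F$.

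It then remains to observe that post-composition with a fully faithful functor $F$ is fully faithful on categories of tensor functors. Faithfulness is immediate, since a tensor-compatible natural transformation is determined by its components and $F$ is faithful on each of these. For fullness, given a tensor-compatible isomorphism $\zeta \colon F \circ G_1 \xrightarrow{\sim} F \circ G_2$, full faithfulness of $F$ lifts each component $\zeta_\Lambda$ uniquely to $\eta_\Lambda \colon G_1(\Lambda) \to G_2(\Lambda)$, and applying faithfulness of $F$ to the naturality squares and the tensor hexagons for $\zeta$ shows that $\{\eta_\Lambda\}$ is itself a tensor-compatible natural transformation with $F \circ \eta = \zeta$. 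Combining this with Theorem~\ref{thm-fullyfaithful} yields full faithfulness of $\cG\textup{-}\Rep_{\bZ_p}^\crys(\Gamma_K) \to \Sht_\cG(\Spd(\cO_K))$.

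I expect the main obstacle to be the first step: making precise the Tannakian description of $\cG$-shtukas over the $v$-sheaf $\Spd(\cO_K)$, in particular checking that morphisms of $\cG$-shtukas correspond exactly to tensor-compatible isomorphisms of the associated exact tensor functors into $\Sht(\Spd(\cO_K))$, with the Frobenius structures matching on both sides. Granting that, together with Theorem~\ref{thm-fullyfaithful}, the remaining steps are purely formal.
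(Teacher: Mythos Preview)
Your proposal is correct and is precisely the argument the paper gives: its entire proof reads ``This follows from Theorem~\ref{thm-fullyfaithful} and the Tannakian formalism,'' and what you have written is a careful unpacking of that sentence. The only difference is one of detail, not of approach.
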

\begin{proof}
	This follows from Theorem \ref{thm-fullyfaithful} and the Tannakian formalism. 
\end{proof}

\subsection{The generic fiber}
In the remainder of this section we show that the construction in Definition \ref{def-greptoshtuka} is compatible with those of \cite{PR2021} over the generic fiber. To be more specific, suppose 
\begin{align*}
	\alpha: \Rep_{\bZ_p}(\cG) \to \Rep_{\bZ_p}^\crys(\Gamma_K)
\end{align*}
is a $\cG$-valued crystalline representation of $\Gamma_K$. Then to $\alpha$ we can attach two $\cG$-shtukas over $\Spd(K)$. We have, on the one hand, the $\cG$-shtuka $(\sP^\alpha, \phi_{\sP^\alpha})$ obtained by pulling back the $\cG$-shtuka from Definition \ref{def-greptoshtuka} along $\Spd(K) \to \Spd(\cO_K)$. On the other hand, the representation $\alpha(\Lambda, \rho)\otimes_{\bZ_p} \bQ_p$ is crystalline for every $(\Lambda, \rho)$, and therefore also de Rham. Hence we can attach to $\alpha$ a $\cG$-shtuka $(\sP^\alpha_{\dR}, \phi_{\sP^\alpha_{\dR}})$ over $\Spd(K)$ using \cite[Def. 2.6.6]{PR2021}. 

\begin{lemma} \label{lem-genericfiber}
	The restriction of the $\cG$-shtuka $(\sP^\alpha, \phi_{\sP^\alpha})$ along $\Spd(K) \to \Spd(\cO_K)$ is given by $(\sP^\alpha_{\dR}, \phi_{\sP^\alpha_{\dR}})$.
\end{lemma}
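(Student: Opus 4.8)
The plan is to reduce to vector bundle shtukas by the Tannakian formalism, then to identify both shtukas with the $\phi$-module of the underlying pro-\'etale $\bZ_p$-local system away from the leg, and finally to match the two modifications at the leg; the last step is the heart of the matter and amounts to the de Rham comparison for prismatic $F$-crystals.

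\emph{Reduction to vector bundles.} Both sides are built Tannakianly: $(\sP^\alpha,\phi_{\sP^\alpha})$ is recovered from the exact tensor functor $\Rep_{\bZ_p}(\cG)\to\Sht(\Spd(\cO_K))$ sending $(\Lambda,\rho)$ to the vector bundle shtuka attached to $U(\alpha(\Lambda,\rho))$ via Definition \ref{cons-crystaltoshtuka}, while by \cite[Def.~2.6.6]{PR2021} the $\cG$-shtuka $(\sP^\alpha_{\dR},\phi_{\sP^\alpha_{\dR}})$ over $\Spd(K)$ is built from the exact tensor functor $\Rep_{\bZ_p}(\cG)\to\Sht(\Spd(K))$ sending $(\Lambda,\rho)$ to the de Rham shtuka of $\alpha(\Lambda,\rho)[1/p]$. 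Since these assignments are natural in $(\Lambda,\rho)$, it is enough to produce, functorially in $(\Lambda,\rho)$, an isomorphism between the pullback to $\Spd(K)$ of the vector bundle shtuka of $U(\alpha(\Lambda,\rho))$ and the de Rham shtuka of $\alpha(\Lambda,\rho)[1/p]$. So we may fix a crystalline representation $L\in\Rep_{\bZ_p}^\crys(\Gamma_K)$ and compare the vector bundle shtuka $\sV$ of Definition \ref{cons-crystaltoshtuka}, restricted to $\Spd(K)$, with the de Rham shtuka $\sV_{\dR}$ of \cite[\textsection 2.6]{PR2021}.

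\emph{Away from the leg.} Fix $(S^\sharp,f)\in\Spd(K)(S)$ with $S=\Spa(R,R^+)$ affinoid perfectoid and choose $r>0$ with $\cY_{[0,r]}(S)\subset S\bdtimes\Spa(\bZ_p)\setminus S^\sharp$. On $\cY_{[0,r]}(S)$, and hence by $\phi$-equivariance on all of $S\bdtimes\Spa(\bZ_p)\setminus S^\sharp$, both $\sV|_{\Spd(K)}$ and $\sV_{\dR}$ are canonically identified with the $\phi$-module attached via (\ref{eq-shtukaslocsys}) to the pro-\'etale $\bZ_p$-local system $L|_{\Gamma_K}$ --- for $\sV|_{\Spd(K)}$ this is Lemma \ref{lem-inclusion} together with the construction of (\ref{eq-shtukaslocsys}), and for $\sV_{\dR}$ it is built into the construction of \cite[\textsection 2.6]{PR2021}. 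We thus get a canonical isomorphism $\sV|_{\Spd(K)}\xrightarrow{\sim}\sV_{\dR}$ over $S\bdtimes\Spa(\bZ_p)\setminus S^\sharp$, and it remains to check it extends across the Cartier divisor $S^\sharp$, i.e.\ that the two $B^+_{\dR}(R^\sharp)$-lattices that the two shtukas cut out inside the common $B_{\dR}(R^\sharp)$-module agree.

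\emph{At the leg, and the main obstacle.} The lattice for $\sV|_{\Spd(K)}$ is the base change of the Breuil--Kisin--Fargues module $\cE_{W(R^+)}:=U(L)\bigl(W(R^+),\ker\theta\bigr)$ along $W(R^+)\to B^+_{\dR}(R^\sharp)$, while the lattice for $\sV_{\dR}$ is, by definition, the de Rham lattice $\Xi(L[1/p])$ of Liu--Zhu. Their coincidence is the de Rham comparison for the prismatic $F$-crystal $U(L)$, and this is the crux of the proof. Since $\Spd(\cO_C)\to\Spd(\cO_K)$ is a $v$-cover, it suffices to check this after pullback, i.e.\ for those $(S^\sharp,f)$ with $f$ factoring through $\Spa(C)$; for such there is a map of prisms $(W(\cO_C^\flat),\ker\theta)\to(W(R^+),\ker\theta)$ over $\Spf\cO_K$, so by the crystal property both lattices are base changed from the case $R^{\sharp+}=\cO_C$ (using that $\Xi$ is likewise compatible with this pullback). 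In the case $R^{\sharp+}=\cO_C$ one reduces, via the map of prisms $(\fS,(E(u)))\to(W(\cO_C^\flat),\ker\theta)$, $u\mapsto[\pi^\flat]$, and the crystal property, to the Breuil--Kisin prism, where $U(L)$ is Kisin's module $\fM(L[1/p])$ by \cite[Rmk.~7.11]{BS21}; one must then check that the base change of $\fM(L[1/p])$ to $B^+_{\dR}$ realizes the de Rham lattice $\Xi(L[1/p])$, which is a reformulation of the classical de Rham comparison theorem and is precisely the normalization that has to be reconciled with \cite{PR2021}. Once this is in place, Frobenius-compatibility is automatic since the two $\phi$'s already agree on the dense open complement of $S^\sharp$, and all identifications being natural in $S$, the pointwise isomorphisms glue to an isomorphism of $\cG$-shtukas over $\Spd(K)/\Spd(\bZ_p)$; everything apart from the $B^+_{\dR}$-lattice identification is formal.
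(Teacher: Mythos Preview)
Your proof is correct and follows the same architecture as the paper's: Tannakian reduction to vector-bundle shtukas, identification of the underlying local systems via Lemma~\ref{lem-inclusion}, and matching of the $B_{\dR}^+$-lattices at the leg after passing to $C$. The only difference is bookkeeping at the last step --- the paper invokes \cite[Prop.~2.5.1]{PR2021} to package the comparison as a pair $(\bP,D)$ and then cites \cite[Cons.~5.5]{BS21} directly for the identification of the $B_{\dR}^+$-lattice with $D_{\dR}(L[1/p])\otimes_K B_{\dR}^+(C)$, whereas you route through the Breuil--Kisin prism and Kisin's $\fM$ via \cite[Rmk.~7.11]{BS21} to reach the same conclusion.
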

\begin{proof}
	By \cite[Prop. 2.5.1]{PR2021}, it's enough to show that the two $\cG$-shtukas determine the same pair $(\bP, D)$, where $\bP$ is a pro-\'etale $\ul{\cG(\bZ_p)}$-torsor on $\Spd(K)$ and $D$ is a $\ul{\cG(\bZ_p)}$-equivariant morphism of $v$-sheaves $D: \bP \to \Gr_{G,\Spd(K)}$ over $\Spd(K)$. Here, $\Gr_{G,\Spd(K)}$ denotes the Beilinson-Drinfeld Grassmannian over $\Spd(K)$, see \cite[Def. 20.2.1]{SW2020}.
	
	Let us first address the pro-\'etale $\ul{\cG(\bZ_p)}$-torsors. By Lemma \ref{lem-tannakiantorsors}, it's enough to show the two determine the same tensor functor $\Rep_{\bZ_p}(\cG) \to \Loc_{\bZ_p}(\Spd(K)).$	We claim that for both $\sP^\alpha$ and $\sP_\dR^\alpha$, this functor is given by assigning to $(\Lambda, \rho) \in \Rep_{\bZ_p}(\cG)$ and $(S^\sharp, f)\in \Spd(K)(S)$ the local system obtained by applying the tilting equivalence to the pullback of $\alpha(\Lambda,\rho)$ along $f: S^\sharp \to \Spa(K)$. For $\sP^\alpha_\dR$ this follows from \cite[Def. 2.6.6]{PR2021}.
	
	Let $\bP^\alpha$ denote the pro-\'etale $\ul{\cG(\bZ_p)}$-torsor corresponding to $\sP^\alpha$. By the definition of (\ref{eq-Gshtukastors}), for every $(S^\sharp, f) \in \Spd(K)(S)$, the tensor functor corresponding to $\bP^\alpha_S$ factors as
	\begin{align*}
		\Rep_{\bZ_p}(\cG) \to (\textup{shtukas over $S$ with one leg at $S^\sharp$}) \xrightarrow{(\ref{eq-shtukaslocsys})} \Loc_{\bZ_p}(S),
	\end{align*}
	where here the first arrow assigns to $(\Lambda, \rho)$ the vector bundle shtuka $\sV^\alpha_{\rho}$ over $S$ with one leg at $S^\sharp$ obtained by pushing forward $\sP^\alpha_S$ along $\rho$. By definition of $\sP^\alpha$, the shtuka $\sV^\alpha_{S,\rho}$ is the pullback to $\Spd(K)$ of the shtuka associated to the prismatic $F$-crystal $U(\alpha(\Lambda, \rho))$. Hence it follows from Lemma \ref{lem-inclusion} that the $\bZ_p$-local system on $S$ corresponding to $\sV^\alpha_{S,\rho}$ is obtained by tilting the pullback of $\alpha(\Lambda, \rho)$ to $S^\sharp$ along $f$, as desired. 
	
	It remains to show that the two morphisms $D: \bP \to \Gr_{G,\Spd(K)}$ agree. Let $C$ be the completion of $\bar{K}$. Since $\Spd(C) \to \Spd(K)$ is a pro-\'etale cover, it's enough to show the two morphisms agree after base change to $\Spd(C)$. Moreover, we can reduce to the case $\cG= \GL_n$ by the Tannakian formalism. In this case pairs $(\bP, D)$ correspond to pairs $(T_0,\Xi)$, where $T_0$ is a finite free $\bZ_p$-module and $\Xi$ is a $B_\dR^+(C)$-lattice in $T_0 \otimes_{\bZ_p} B_\dR(C)$. 
	
	Suppose $T$ is a crystalline representation of $\Gamma_K$ on a finite free $\bZ_p$-module with corresponding shtukas $(\sV, \phi_\sV)$ coming from Definition \ref{cons-crystaltoshtuka} and $(\sV_\dR, \phi_{\sV_\dR})$ coming from \cite[Def. 2.6.4]{PR2021}. By the first part of the proof, we know both $(\sV,\phi_{\sV})$ and $(\sV_\dR, \phi_{\sV_\dR})$ correspond to the finite free $\bZ_p$-module underlying $T$. Moreover, the corresponding $B_\dR^+(C)$-lattice $\Xi$ is given by $D_\dR(T[1/p]) \otimes_K B_\dR^+(C)$	in both cases. Indeed, for $(\sV_\dR, \phi_{\sV_\dR})$ this follows from \cite[Prop. 2.6.3]{PR2021}. On the other hand, let $(M, \varphi_M)$ denote the BKF-module coming from the evaluation of $U(T)$ on $(W(\cO_C^\flat), \ker(\theta))$. Then since $(\sV,\phi_\sV)$ is the shtuka associated to $U(T)$ as in Definition \ref{cons-crystaltoshtuka}, it follows from the proof of \cite[Thm. 5.2]{BS21} that the $B_\dR^+(C)$-lattice associated to $(\sV, \phi_\sV)$ is $\phi^\ast M \otimes_{W(\cO_C^\flat)} B_\dR^+(C)$. But by \cite[Rem. 5.4]{BS21}, this agrees with $D_\dR(T[1/p]) \otimes_K B_\dR^+(C)$. 
\end{proof}

Suppose $\alpha$ is a $\cG$-valued crystalline representation of $\Gamma_K$. For any representation $(V,\rho)$ of the generic fiber $G$ of $\cG$, the representation $\alpha(V,\rho)\otimes_{\bZ_p}\bQ_p$ of $\Gamma_K$ is crystalline, therefore Hodge-Tate. The functor assigning to any $(V,\rho)$ the grading on $D_{\textup{HT}}(V)$ defines a graded fiber functor on $\Rep_{\bQ_p}(G_{\bar{K}})$ in the sense of \cite[Def. 3.1]{Ziegler2015}, and therefore it follows from the Tannakian formalism (see e.g., \cite[Cons. 3.4]{Ziegler2015}) that the grading on $D_\mathrm{HT}(V)$ as $V$ varies is determined by a cocharacter $\mu$ of $G_{\bar{K}}$, called the \textit{Hodge-Tate cocharacter} for $\alpha$.

\begin{lemma}\label{lem-bdbymu}
	Let $\alpha$ be a $\cG$-valued crystalline representation of $\Gamma_K$ with Hodge-Tate cocharacter $\mu$. Then the shtuka $(\sP^\alpha, \phi_{\sP^\alpha})$ over $\Spd(\cO_K)$ is bounded by $\mu$.
\end{lemma}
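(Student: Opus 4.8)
The plan is to reduce the statement, via the Tannakian formalism and the definition of boundedness for $\cG$-shtukas, to the already-established vector bundle case. Recall that by \cite[\textsection 2.3.4]{PR2021} a $\cG$-shtuka is bounded by $\mu$ if and only if for every representation $(\Lambda,\rho) \in \Rep_{\bZ_p}(\cG)$ the associated vector bundle shtuka $\sV^\alpha_\rho$, obtained by pushing forward $\sP^\alpha$ along $\rho$, has relative position bounded by $\rho \circ \mu$ in the sense of the $v$-sheaf local model; more precisely, it suffices to test this on a collection of representations which detect the closure of the Schubert cell attached to $\mu$, and since boundedness can be checked after passing to a $v$-cover, it is enough to work over $\Spd(\cO_{\breve K})$ and on geometric points.

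First I would observe that, by the definition of $\sP^\alpha$ in Definition \ref{def-greptoshtuka} together with the compatibility with pushforward along representations, the vector bundle shtuka $\sV^\alpha_\rho$ is precisely the shtuka associated (via Definition \ref{cons-crystaltoshtuka}) to the prismatic $F$-crystal $U(\alpha(\Lambda,\rho))$. So the problem is reduced to the following assertion: if $T$ is a crystalline $\bZ_p$-representation of $\Gamma_K$ with Hodge-Tate cocharacter $\nu$ (here $\nu = \rho\circ\mu$), then the vector bundle shtuka over $\Spd(\cO_K)$ attached to $T$ via the Bhatt-Scholze prismatic $F$-crystal is bounded by $\nu$. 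Next I would verify the boundedness on the relevant integral points: over a geometric point $\Spa(C,\cO_C)$ with untilt corresponding to $\cO_C$, the $\cG$-BKF-module $\sP^\alpha_{W(\cO_C^\flat)}$ is evaluation of $U(\alpha)$ on the prism $(W(\cO_C^\flat), \ker\theta)$, and the relative position of $\varphi^\ast$ and identity at the divisor $\xi = 0$ is governed by the Hodge filtration of the associated filtered isocrystal, which is exactly the filtration of type $\mu$ by the crystalline comparison — this is the content of \cite[Cons. 5.5, Thm. 5.6]{BS21} and was already used in the proof of Lemma \ref{lem-genericfiber} to identify the $B_\dR^+$-lattice as $D_\dR(V)\otimes_K B_\dR^+$. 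Over the generic fiber, Lemma \ref{lem-genericfiber} identifies $\sP^\alpha|_{\Spd(K)}$ with $\sP^\alpha_\dR$, and the latter is bounded by the Hodge-Tate cocharacter by \cite[Def. 2.6.6]{PR2021} and the de Rham formalism, so the boundedness over the open locus is automatic.

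The main subtlety will be checking boundedness at the special fiber, i.e. over the points of $\Spd(\cO_K)$ lying over the closed point, since boundedness of a shtuka over an integral base is a condition over all of $S \bdtimes \Spa\bZ_p$, and the $v$-sheaf local model $\bM^v_{\cG,\mu}$ is a condition on the whole family. The way to handle this is to use that the $\cG$-BKF-module over $\cO_C$ (or over the integral perfectoid ring underlying a $v$-cover of $\Spd(\cO_K)$) already has its leg bounded by $\mu$ — this is a statement about a single Breuil-Kisin-Fargues module, and the boundedness follows from the explicit description of $U(\alpha(\Lambda,\rho))$ via Kisin's $\fM$ on the Breuil-Kisin prism $(\fS, (E(u)))$ as recalled in the proof of Proposition \ref{prop-exacttensor}: the Breuil-Kisin module has the property that $\varphi^\ast \fM/\fM$ is killed by $E(u)$ with the cokernel of the appropriate shape, which is precisely the boundedness by $\mu$ condition after specialization. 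Then, since $\Spd(\cO_C) \to \Spd(\cO_K)$ and the integral perfectoid cover are $v$-covers (\cite[Lem. 18.1.2]{SW2020}), and boundedness is a $v$-local condition on $\Spd(\cO_K)$ by \cite[\textsection 2.3.4]{PR2021}, the boundedness descends. I would close by remarking that since the collection of representations $\rho$ needed to detect $\bM^v_{\cG,\mu}$ can be taken inside $\Rep_{\bZ_p}(\cG)$ and $\alpha$ is exact and tensor, the group-level statement follows from the $\GL_n$-level statement applied to each such $\rho$.
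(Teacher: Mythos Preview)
Your approach differs substantially from the paper's, which is much shorter: the paper simply observes that by Lemma~\ref{lem-genericfiber} the shtuka is bounded by $\mu$ over $\Spd(K)$, and then uses that boundedness by $\mu$ is a closed condition (since $\bM^v_{\cG,\mu}$ is a closed sub-$v$-sheaf of the Beilinson--Drinfeld Grassmannian) together with the fact that $\Spd(K)$ is dense in $\Spd(\cO_K)$ \cite[Lem.~2.17]{AGLR}. No direct computation at the special fiber is needed.

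Your route---reducing via Tannakian formalism to vector bundle shtukas and then verifying boundedness directly using the Breuil--Kisin description---is in principle plausible, but as written it has gaps. First, the claim that boundedness of a $\cG$-shtuka by $\mu$ can be checked representation-by-representation (i.e., that $\sP^\alpha$ is bounded by $\mu$ iff each $\sV^\alpha_\rho$ is bounded by $\rho\circ\mu$) is not the definition in \cite[\S 2.3.4]{PR2021} and requires a nontrivial comparison of $\bM^v_{\cG,\mu}$ with intersections of pullbacks of $\GL_n$-local models; you would need to justify this carefully. Second, your assertion that it suffices to check on geometric points of $\Spd(\cO_{\breve K})$ is not obviously valid: boundedness is a condition over all perfectoid test objects, and reducing to geometric points is exactly the kind of step that requires a density/closedness argument of the sort the paper uses. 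Third, the sketch for the special fiber (``$\varphi^\ast\fM/\fM$ is killed by $E(u)$ with the cokernel of the appropriate shape'') presumes $\mu$ minuscule and is not a complete argument for general $\mu$. The paper's density argument avoids all of these issues in one stroke; what your approach would buy, if completed, is an explicit identification of the relative position at every point, but that is not needed for the lemma as stated.
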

\begin{proof}
	The $\cG$-shtuka $\sP^\alpha_\dR$ is bounded by $\mu$ by \cite[Def. 2.6.6]{PR2021}, so Lemma \ref{lem-genericfiber} implies that $\sP^\alpha_{\Spd(K)}$ is bounded by $\mu$. Since $\Spd(\cO_K)$ is topologically flat by \cite[Lem. 2.17]{AGLR}, the result follows from Lemma \ref{lem-topflat}.
\end{proof}

\section{Shimura varieties of toral type} \label{section-SV}

\subsection{Shimura varieties and crystalline representations}\label{sub-SV}
In this section we recall some definitions and notation from the theory of (global) Shimura varieties. Let $(\eG, X)$ be a Shimura datum, meaning that $\eG$ is a connected reductive group over $\bQ$, and $X$ is a $\eG(\bR)$-conjugacy class of homomorphisms
\begin{align*}
	h:\mathbb{S} = \textup{Res}_{\bC/\bR} \mathbb{G}_m \to \eG_\bR
\end{align*}
satisfying the axioms (SV1)-(SV3) in \cite[Def. 5.5]{Milne2005}. Associated to any $h \in X$ is a cocharacter $\mu_h$ of $\eG_\bC$, whose conjugacy class is defined over a number field $\eE$, called the reflex field of $(\eG, X)$. Denote by $\{\mu\}$ the $G(\overbar{\bQ})$-conjugacy class of any $\mu_h$ for $h \in X$. We will also refer to any choice of an element of $\{\mu\}$ as $\mu$.

Let $\bA_f$ denote the finite adeles over $\bQ$ and $\bA_f^p$ denote the finite adeles away from $p$. For any compact open subgroup $\eK \subset \eG(\bA_f^p)$, we can attach to $(\eG, X)$ and $\eK$ the Shimura variety $\Sh_\eK(\eG,X)$, which is a quasi-projective variety over $\bC$ whose $\bC$-points are given by 
\begin{align*}
	\Sh_\eK(\eG, X)(\bC) = \eG(\bQ) \backslash X \times \eG(\bA_f) / \eK.
\end{align*}
There exists a canonical model for $\Sh_\eK(\eG,X)$ over the reflex field $\eE$, which we will denote by $\Sh_\eK(\eG,X)_\eE$. 

Let $\eZ$ denote the center of $\eG$, and let $\eZ^\circ$ be the identity component of $\eZ$. We write $\eZ_{ac}$ denote the anti-cuspidal part of $\eZ^\circ$ in the sense of \cite[Def. 1.5.4]{KSZ}. Then $\eZ^\circ / \eZ_{ac}$ is cuspidal in the sense of \textit{loc. cit.}, i.e., has equal $\bQ$-split rank and $\bR$-split rank. Denote by $\eG^c$ the quotient $\eG/\eZ_{ac}$, and for any subgroup $\eH \subset \eG(\bA_f)$, denote by $\eH^c$ the image of $\eH$ under $\eG \to \eG^c$. 

%In addition to the axioms above, we assume that $\eZ^\circ$ splits over a CM field (this is (SV6) in \cite{Milne2005}). Denote by $\eZ_{s}$ the maximal subtorus of $\eZ^\circ$ which is $\bQ$-anisotropic and $\bR$-split, and let $\eG^c$ denote the quotient $\eG/\eZ_{s}$. \edit{this condition should be unnecessary}

%For any subgroup $\eH$ of $\eG(\bA_f)$, we denote by $\eH^c$ the image of $\eH$ in $\eG^c(\bA_f)$. In particular, $\eK^c = \eK_p^c \eK^{p,c}$ for compact open subgroups $\eK_p \subset \eG(\bQ_p)$ and $\eK^{p,c} \subset \eG(\bA_f^p)$.
%We fix a smooth integral model $\cG^c$\edit{This needs to be changed - $\mathscr{G}^c$ should be the parahoric subgroup of $G^c$ corresponding to $\mathscr{G}$} of $G^c$ such that $\eG^c(\bZ_p) = \eK_p^c$.

Fix a prime $p$, and let $G = \eG\otimes_\bQ \bQ_p$ and $G^c = \eG^c \otimes_\bQ \bQ_p$ . Assume $\eK = \eK_p\eK^p \subset \eG(\bA_f)$, where $\eK_p$ and $\eK^p$ are compact open subgroups of $\eG(\bQ_p)$ and $\eG(\bA_f^p)$, respectively. We will henceforth assume $\eK^p$ is neat. 

Suppose $\eK_p$ is a parahoric subgroup of $G(\bQ_p)$. Then $\eK_p$ is the connected stabilizer of a point $x$ in the extended Bruhat-Tits building $\mathscr{B}(G,\bQ_p)$ for $G$ over $\bQ_p$. We will denote by $\cG$ the corresponding parahoric group scheme defined in \cite{BTII}, so $\cG$ is a smooth affine group scheme over $\bZ_p$ which satisfies $\cG(\bZ_p) = \eK_p$ and $\cG_{\bQ_p} = G$. Since $G \to G^c$ is a central extension, by \cite[Thm. 2.1.8]{Landvogt2000} we obtain a canonical $G(\bQ_p)$-equivariant map of extended Bruhat-Tits buildings $\mathscr{B}(G,\bQ_p) \to \mathscr{B}(G^c, \bQ_p)$. Let $x^c$ denote the image of $x$ under this map, and denote by $\cG^c$ the parahoric group scheme corresponding to $x^c$. By \cite[1.7.6]{BTII}, the homomorphism $G \to G^c$ extends to a homomorphism of $\bZ_p$-group schemes $\cG \to \cG^c$, see for comparison a related discussion in \cite[1.1.3]{KP2018}. On $\bZ_p$-points, $\cG(\bZ_p) \to \cG^c(\bZ_p)$ factors as a composition
\begin{align}\label{composition}
	\cG(\bZ_p) = \eK_p \twoheadrightarrow \eK_p^c \hookrightarrow \cG^c(\bZ_p).
\end{align}

\begin{rmk}
	A more concrete description of $\cG^c$ is available in some cases. If $Z_{ac} = \ker(G \to G^c)$ is $R$-smooth in the sense of \cite[Def. 2.4.3]{KZ21}, then $\cG^c$ is the quotient of $\cG$ by the Zariski closure $\mathcal{Z}_{ac}$ of $Z_{ac}$ inside of $\cG$ by \cite[Prop. 2.4.14]{KZ21}. This happens in particular if $G$ splits over a tamely ramified extension, see \cite[2.4.5]{KZ21} (cf. \cite[Prop. 1.1.4]{KP2018}). 
\end{rmk}

In the remainder of this section, we focus on the case where $\eG = \eT$ is a $\bQ$-torus. Then the unique parahoric subgroup $\eK_p$ of $T(\bQ_p)$ corresponds to the identity component $\cT$ of the N\'eron model of $T$, and the conjugacy class $X$ reduces to a single homomorphism $h: \mathbb{S} \to \eT_\bR$. For any neat compact open subgroup $\eK \subset \eT(\bA_f)$, we obtain a zero-dimensional Shimura variety, which is given by a finite set of points:
\begin{align} \label{eq-SV}
	\Sh_\eK(\eT,\{h\}) = \coprod_{i\in I} \Spec{\eE_i}, 
\end{align}
where each $\eE_i$ is a finite extension of $\eE$.

We will define below certain $\cT^c(\bZ_p)$-valued crystalline representations of $\Gal(\bar{E}/E)$, which will be used to define shtukas over an integral model for $\Sh_\eK(\eT,\{h\})$. Since $\eT$ is a torus, $\{\mu\}$ is a singleton set, and therefore $\mu$ is defined over $\eE$. Denote by $r(\mu)^\textup{alg}$ the following homomorphism of algebraic groups over $\bQ$:
\begin{align}\label{eq-reflexnorm}
	\Res_{\eE/\bQ} \bG_m \xrightarrow{\Res_{\eE/\bQ} \mu} \Res_{\eE/\bQ} \eT \xrightarrow{\textup{Nm}_{\eE/\bQ}} \eT.
\end{align}
Applying $r(\mu)^\textup{alg}$ to $\bA$-points, we obtain a homomorphism $\bA_\eE^\times \to \eT(\bA)$ which, by functoriality, sends $\eE^\times = (\Res_{\eE/\bQ} \bG_m)(\bQ)$ into $\eT(\bQ)$. Hence, $r(\mu)^\textup{alg}$ induces a homomorphism
\begin{align*}
	\eE^\times \backslash \bA_\eE^\times \to \eT(\bQ) \backslash \eT(\bA).
\end{align*}
For each open compact subgroup $\eK$ of $\eT(\bA_f)$, the quotient $\eT(\bQ) \backslash \eT(\bA_f) / \eK$ is finite, so the natural map $\eT(\bQ) \backslash \eT(\bA) \to \eT(\bQ) \backslash \eT(\bA_f) / \eK$ factors through $\pi_0(\eT(\bQ)\backslash \eT(\bA))$, and the composition $\eE^\times \backslash \bA_\eE^\times \to \eT(\bQ) \bs \eT(\bA_f)/\eK$ factors through $\pi_0(\eE^\times \bs \bA_\eE^\times) \cong \Gal(\eE^\ab / \eE)$ via the global Artin homomorphism, $\textup{Art}_\eE$ (which we normalize geometrically):
\begin{align*}
	\eE^\times \bs \bA_\eE^\times \xrightarrow{\textup{Art}_\eE} \Gal(\eE^\ab / \eE) \to \eT(\bQ) \bs \eT(\bA_f) / \eK.
\end{align*}
We will denote the resulting factorization by $r(\mu)_\eK: \Gal(\eE^\ab / \eE) \to \eT(\bQ) \bs \eT(\bA_f) / \eK$. 

We can now pin down the fields $\eE_i$ appearing in (\ref{eq-SV}) more precisely. Let $\eE_\eK$ be the finite extension of $\eE$ with the property that 
\begin{align}\label{eq-EU}
	\Gal(\eE^\ab / \eE_\eK) = \ker(r(\mu)_\eK).
\end{align}

\begin{lemma}\label{lem-E_K}
	Each of the fields $\eE_i$ is isomorphic to $\eE_\eK$.
\end{lemma}
\begin{proof}	
This follows by recalling the definition of the canonical model for $\Sh_\eK(\eT,\{h\})$ over $\eE$. For $\sigma \in \Gal(\bar{\eE} / \eE)$ and $x \in \Sh_\eK(\eT,\{h\})(\bar{E}) \cong \eT(\bQ) \bs \eT(\bA_f) / \eK$, one uses $r(\mu)_\eK$ to define the reciprocity law
\begin{align}\label{eq-reciprocity}
	\sigma(x) = r(\mu)_\eK(\sigma) \cdot x,
\end{align}
which determines a continuous action of $\Gal(\bar{E} / E)$ on the finite set $\Sh_\eK(\eT,\{h\})$, and therefore the structure of a finite \'etale $E$-scheme on $\Sh_\eK(\eT,\{h\})$. The isomorphisms $\eE_i \cong \eE_\eK$ then follow immediately from the definition of the action (\ref{eq-reciprocity}).
\end{proof}

Passing to the limit along open compact subsets $\eK$ of $\eT(\bA_f)$, we obtain from the collection $\{r(\mu)_\eK\}$ a map
\begin{align*}
	r(\mu): \Gal(\eE^\ab / \eE) \to \eT(\bQ)^- \bs \eT(\bA_f),
\end{align*}
where $\eT(\bQ)^-$ is the closure of $\eT(\bQ)$ in $\eT(\bA_f)$. Fix now a neat compact open subgroup $\eK \subset \eT(\bA_f)$, and let $\textup{pr}_\eK$ be the projection $\eT(\bQ)^- \bs \eT(\bA_f) \to \eT(\bQ)\bs \eT(\bA_f) / \eK$. The kernel of $\textup{pr}_\eK$ is $\eT(\bQ)^- \bs \eT(\bQ)^-\eK$, so by definition $r(\mu)$ induces a map
\begin{align*}
	r(\mu)_\eK: \Gal(\eE^\ab / \eE_\eK) \to \eT(\bQ)^- \bs \eT(\bQ)^-\eK, 
\end{align*}
which we also denote by $r(\mu)_\eK$. 

Recall that $\eK^c$ denotes the image in $\eT^c(\bA_f)$ of any subgroup $\eK$ of $\eT(\bA_f)$. Note that if $\eK$ is neat, then $\eK^c$ is neat as well. By cuspidality of $\eT^c$, $\eT^c(\bQ)$ is discrete in $\eT(\bA_f)$ by \cite[Thm. 5.26]{Milne2005}, and $\eK^c \cap \eT^c(\bQ) = \{1\}$ (see also \cite[Lem. 1.5.5]{KSZ} and \cite[proof of Lem. 1.5.7]{KSZ}). Then $\eT \to \eT^c$ induces
\begin{align}
	\eT(\bQ)^- \bs \eT(\bQ)^-\eK \to \eT^c(\bQ) \bs \eT^c(\bQ)\eK^c \cong \eK^c
\end{align}
Denote by $r(\mu)_{\eK,p}$ the composition
\begin{align*}
	r(\mu)_{\eK,p}:\Gal(\eE^\ab / \eE_\eK) \xrightarrow{r(\mu)_\eK} \eT(\bQ)^- \bs \eT(\bQ)^-\eK \to \eK^c \hookrightarrow \eT^c(\bA_f) \to T^c(\bQ_p),
\end{align*}
where the last arrow denotes the projection. Let $v$ denote the place above $p$ induced by $\eE_\eK \hookrightarrow \overbar{\bQ} \hookrightarrow \overbar{\bQ}_p$, and let $E_\eK$ denote the completion of $\eE_\eK$ at $v$. Then the composition of $r(\mu)_{\eK,p}$ with the morphism of Galois groups
\begin{align*}
	\Gamma_{E_\eK} = \Gal(\overbar{\bQ}_p / E_\eK) \twoheadrightarrow \Gal(E^\ab / E_\eK) \to \Gal(\eE^\ab / \eE_\eK)
\end{align*}
induces a $T^c(\bQ_p)$-valued representation of $\Gamma_{E_\eK}$,
\begin{align}\label{eq-crystallinerep}
	r(\mu)_{\eK,p,\textup{loc}}: \Gamma_{E_\eK} \to T^c(\bQ_p).
\end{align}
Notice that if $\eK = \cT(\bZ_p)U^p$ for some small enough compact open subgroup $U^p$ of $\eT(\bA_f^p)$, then $r(\mu)_{\eK,p,\textup{loc}}$ actually lands in $\cT^c(\bZ_p)$, so in this case it induces a $\cT^c$-valued representation of $\Gamma_{E_\eK}$.

Suppose now $\rho: T^c \to \GL(W)$ is an algebraic representation of $T^c$ on a finite dimensional $\bQ_p$-vector space $W$. By composition with $\rho$, $r(\mu)_{\eK, p, \textup{loc}}$ induces a representation of $\Gamma_{E_\eK}$
\begin{align*}
	r(\mu,\rho)_{\eK,p}: \Gamma_{E_\eK} \to T^c(\bQ_p) \to \GL(W).
\end{align*}

\begin{lemma} \label{lem-crystalline}
	Assume that $\eK = \eK_p\eK^p$ where $\eK^p$ is a neat compact open subgroup of $\eT(\bA_f^p)$. For any representation $\rho: T^c \to \GL(W)$ of $T^c$ on a finite dimensional $\bQ_p$-vector space $W$, the induced $\Gamma_{E_\eK}$-representation $r(\mu,\rho)_{\eK,p}$ is crystalline. 
\end{lemma}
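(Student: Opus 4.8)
The plan is to combine the compatibility of the reciprocity law defining the $\Gamma_{E_\eK}$-action with local class field theory with the standard classification of crystalline representations valued in a torus. Since $T^c$ is a torus, $r(\mu,\rho)_{\eK,p}$ has abelian image and hence factors through $\Gamma_{E_\eK}^{\ab}$; by Fontaine's description of crystalline characters, a continuous homomorphism $\Gamma_{E_\eK}\to T^c(\bQ_p)$ is crystalline if and only if the character $\cO_{E_\eK}^\times\to T^c(\bQ_p)$ obtained from it via the local Artin map $\textup{Art}_{E_\eK}\colon\cO_{E_\eK}^\times\to\Gamma_{E_\eK}^{\ab}$ is the restriction to units of a homomorphism of $\bQ_p$-tori $\Res_{E_\eK/\bQ_p}\bG_m\to T^c_{\bQ_p}$. (The analogous criterion for $\GL(W)$, or a reduction to a single faithful $\rho$ using stability of crystallinity under the Tannakian operations, then yields the lemma for all $\rho$.) So it suffices to compute $r(\mu)_{\eK,p,\textup{loc}}\circ\textup{Art}_{E_\eK}$ on $\cO_{E_\eK}^\times$ and check that it has this algebraic shape.

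I would carry this out as follows. By construction $r(\mu)_\eK$ is the restriction to $\Gal(\eE^{\ab}/\eE_\eK)$ of the homomorphism $\eE^\times\backslash\bA_\eE^\times\xrightarrow{\textup{Art}_\eE}\Gal(\eE^{\ab}/\eE)\xrightarrow{r(\mu)}\eT(\bQ)^-\backslash\eT(\bA_f)$ induced by $r(\mu)^\textup{alg}$ on ideles; by functoriality of Artin maps under the norm $\textup{Nm}_{\eE_\eK/\eE}$, the corresponding idele-class character of $\eE_\eK$ is $\bA_{\eE_\eK}^\times\xrightarrow{\textup{Nm}_{\eE_\eK/\eE}}\bA_\eE^\times\xrightarrow{r(\mu)^\textup{alg}}\eT(\bA_f)\to\eT^c(\bA_f)$ followed by the projection $\eT^c(\bQ)\eK^c\to\eK^c\to T^c(\bQ_p)$, the splitting $\eT^c(\bQ)\eK^c=\eT^c(\bQ)\times\eK^c$ being precisely where cuspidality of $\eT^c$ (i.e.\ $\eK^c\cap\eT^c(\bQ)=1$) enters. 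Restricting to the decomposition group $\Gamma_{E_\eK}$ at $v$ and using that $\textup{Art}_{\eE_\eK}$ localises to $\textup{Art}_{E_\eK}$, I evaluate on the idele supported at $v$ with entry $y\in\cO_{E_\eK}^\times$; since $r(\mu)^\textup{alg}=\textup{Nm}_{\eE/\bQ}\circ\Res_{\eE/\bQ}\mu$ is built place by place, the resulting element of $\eT^c(\bA_f)$ is trivial away from $p$ and equals $N(\textup{Nm}_{E_\eK/E}(y))$ at $p$, where $N\colon\Res_{E/\bQ_p}\bG_m\to T^c_{\bQ_p}$, $x\mapsto\textup{Nm}_{E/\bQ_p}(\mu(x))$ pushed along $T\to T^c$, is a homomorphism of $\bQ_p$-tori (here $\mu$ is regarded as a cocharacter over $E$, which is legitimate as $\eT$ is abelian so $\mu$ is already defined over $\eE\subset E$). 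Since $N(\cO_E^\times)$ is the image under $T\to T^c$ of the compact, hence bounded, set $\textup{Nm}_{E/\bQ_p}(\mu(\cO_E^\times))\subseteq\cT(\bZ_p)$, it lies in $\eK^c_p=\textup{image of }\cG(\bZ_p)=\cT(\bZ_p)$; thus the $\eT^c(\bQ)$-component of the decomposition is trivial and $r(\mu)_{\eK,p,\textup{loc}}(\textup{Art}_{E_\eK}(y))=N(\textup{Nm}_{E_\eK/E}(y))$ for all $y\in\cO_{E_\eK}^\times$.

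The right-hand side is the restriction to $\cO_{E_\eK}^\times$ of the composite of $\bQ_p$-algebraic homomorphisms $\Res_{E_\eK/\bQ_p}\bG_m\xrightarrow{\textup{Nm}_{E_\eK/E}}\Res_{E/\bQ_p}\bG_m\xrightarrow{N}T^c_{\bQ_p}\xrightarrow{\rho}\GL(W)$, so the criterion of the first paragraph applies and $r(\mu,\rho)_{\eK,p}$ is crystalline. I expect the main obstacle to be bookkeeping rather than anything conceptual: one must keep straight three separate functorialities --- base change of number fields versus the norm map, global versus local Artin maps, and the quotient by $\eT^c(\bQ)$ that makes the idele-class character well defined but is invisible on the units at $v$ --- and one must pin down and cite correctly the ``crystalline $\iff$ algebraic on inertia'' criterion for torus-valued representations. (Alternatively one could invoke Shimura--Taniyama reciprocity to realise $r(\mu)_{\eK,p,\textup{loc}}$ on the Tate module of a $\cT^c$-structured $p$-divisible group over $\cO_{E_\eK}$ with good reduction, which is crystalline; but identifying that Tate module with the abstract reciprocity map is itself substantial, so the class-field-theoretic route above is more self-contained.)
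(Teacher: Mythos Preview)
Your proposal is correct and is precisely the argument the paper defers to: the paper's proof is a one-line citation of \cite[Lem.~4.4]{KSZ}, and what you have written is the content of that lemma adapted to the present setting. In particular, your computation that $r(\mu)_{\eK,p,\textup{loc}}\circ\textup{Art}_{E_\eK}$ restricted to $\cO_{E_\eK}^\times$ is the algebraic map $N\circ\textup{Nm}_{E_\eK/E}$ is exactly the identification the paper later invokes (see the discussion of $f_1$ preceding Proposition~\ref{prop-LT}, where this map reappears and \cite[Prop.~4.3.14]{KSZ} is cited for the same computation on inertia via the Lubin--Tate character), and the ``crystalline $\Leftrightarrow$ algebraic on local units'' criterion for torus-valued representations is the standard input.
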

\begin{proof}
	This is a straightforward modification of \cite[Lem. 4.4]{KSZ}. Let us indicate the main points. 
		By a standard result in $p$-adic Hodge theory (see \cite[Prop. B.4 (i)]{Conrad2011} and the subsequent remark), to show $r(\mu,\rho)_{\eK, p}$ is crystalline it is enough to show that the composition 
		\begin{align}\label{crystallinelem-comp}
				\mathcal{O}_{E_\eK}^\times \hookrightarrow E_\eK^\times \to \Gamma_{E_\eK}^\ab \xrightarrow{r(\mu,\rho)_{\eK,p}} \GL(W)
			\end{align}
		agrees with the restriction to $\mathcal{O}_{E_\eK}^\times$ of an algebraic $\bQ_p$-group homomorphism $\Res_{E_\eK/ \bQ_p} \bG_m \to \GL(W)$.
		
		Let $f$ be the homomorphism of topological groups 
		\begin{align*}
				f: E_\eK^\times \xrightarrow{\textup{Art}_{E_\eK}} \Gamma_{E_\eK}^\ab \to \Gal(\eE^\ab / \eE_\eK) \to (\eK \cap \eT(\bQ)^-) \bs \eK,
			\end{align*}
		where $\textup{Art}_{E_\eK}$ is the local Artin map for $E_\eK$ (normalized geometrically), and denote by $f^c$ the composition
		\begin{align*}
				f^c: E_\eK^\times \xrightarrow{f} \eT(\bQ)^- \bs \eT(\bQ)^-\eK \to \eK^c \hookrightarrow \eT^c(\bA_f).
			\end{align*}
		By local-global compatibility, (\ref{crystallinelem-comp}) agrees with the composition
		\begin{align*}
				\mathcal{O}_{E_\eK}^\times \hookrightarrow E_\eK^\times \xrightarrow{f^c} T^c(\bA_f) \to T^c(\bQ_p) \xrightarrow{\rho} \GL(W),
			\end{align*}
		so it is enough to show that $f^c$ restricted to $\mathcal{O}_{E_\eK}^\times$ is algebraic.
		
		Let $f_1$ be the homomorphism of $\bQ_p$-groups
		\begin{align*}
				f_1:\Res_{E_\eK / \bQ_p} \bG_m \xrightarrow{\textup{Nm}_{E_\eK / E}} \Res_{E/\bQ_p} \bG_m \hookrightarrow (\Res_{\eE/\bQ} \bG_m) \otimes_{\bQ} \bQ_p \xrightarrow{r(\mu)^\textup{alg} \otimes_\bQ \bQ_p} T,
			\end{align*}
		and let $f_1^c$ be the composition of $f_1$ with $T \to T^c$. Applied to $\bQ_p$-points, $f_1^c$ induces a group homomorphism $E_\eK^\times \to T^c(\bQ_p)\hookrightarrow \eT^c(\bA_f)$. By construction (and local-global compatibility, once more), for any $x \in E_\eK^\times$, the images of $f_1(x)$ and $f(x)$ agree in $\eT(\bQ)^- \bs \eT(\bA_f)$, and therefore the images of $f_1^c(x)$ and $f^c(x)$ agree in $\eT^c(\bQ) \bs \eT^c(\bA_f)$.
		
		From here the arguments of \cite[Lem. 4.4]{KSZ} go through verbatim: If $x \in \mathcal{O}_{E_\eK}^\times$, cuspidality of $T^c$ forces the images of $f_1^c(x)$ and $f^c(x)$ to agree in $T^c(\bA_f)$, and the result follows.
\end{proof}

\subsection{The shtuka on the generic fiber}\label{sub-locsys}
In this section we return to the general setup at the beginning of section \ref{sub-SV} in order to explain how Pappas and Rapoport construct a shtuka over the generic fiber of a given Shimura variety with parahoric level structure.

Let $(\eG,X)$ be a Shimura datum with reflex field $\eE$. Let $v$ denote a place of $\eE$ corresponding to an embedding $\eE \hookrightarrow \bar{\bQ} \hookrightarrow \bar{\bQ}_p$, and let $E$ denote the completion of $\eE$ at $v$. If $L$ is an extension of $\bQ$, denote by $\mathcal{C}(L)$ the set of $\eG(L)$-conjugacy classes of cocharacters $\bG_{m,L} \to \eG_L$. By \cite[Lem. 1.1.3]{Kottwitz1984}, the embedding $\bar{\bQ} \to \bar{\bQ}_p$ induces a bijection $\mathcal{C}(\bar{\bQ}) \to \mathcal{C}(\bar{\bQ}_p)$, so the $G(\bar{\bQ})$-conjugacy class of characters $\{\mu_h\}$ coming from $(\eG, X)$ determines a unique $G(\bar{\bQ}_p)$-conjugacy class of cocharacters $\{\mu\}$ of $G_{\bar{\bQ}_p}$. One can check that the map $\mathcal{C}(\bar{\bQ}) \to \mathcal{C}(\bar{\bQ}_p)$ is equivariant for the action of $\Gal(\bar{\bQ}_p / \bQ_p)$, and it follows that $E$ is the local reflex field for $(G,\{\mu\})$, i.e., it is the field of definition for the conjugacy class $\{\mu\}$.

For any neat compact open subgroup $\eK \subset \eG(\bA_f)$, let 
\begin{align*}
	\Sh_\eK(\eG,X)_E = \Sh_\eK(\eG,X)_\eE \otimes_\eE E.
\end{align*}
Suppose $\eK = \eK_p \eK^p$, with $\eK_p = \cG(\bZ_p)$ a parahoric subgroup, and $\eK^p \subset \eG(\bA_f^p)$ neat. For each $\eK_p' \subset \eK_p$, denote by $\eK'$ the product $\eK_p'\eK^p$. Then the morphism
\begin{align}\label{covers}
	\Sh_{\eK'}(\eG,X)_E \to \Sh_{\eK}(\eG,X)_E
\end{align}
is a finite \'etale Galois cover, whose Galois group we denote by $\Gal(\Sh_{\eK'}/ \Sh_\eK)$. We also consider the infinite level Shimura variety
\begin{align*}
	\Sh_{\eK^p}(\eG,X)_E := \varprojlim_{\eK_p' \subset \eK_p} \Sh_{\eK_p'\eK^p}(\eG,X)_E.
\end{align*}
Since the transition morphisms (\ref{covers}) are affine, this limit can be taken in the category of schemes, by \cite[\href{https://stacks.math.columbia.edu/tag/01YX}{Tag 01YX}]{stacks-project}. Define
\begin{align*}
	\Gal(\Sh_{\eK^p} / \Sh_\eK) = \varprojlim_{\eK_p' \subset \eK_p} \Gal(\Sh_{\eK'}/\Sh_{\eK}).
\end{align*}
Then $\Sh_{\eK^p}(\eG,X)_E \to \Sh_{\eK}(\eG,X)_E$ is a torsor for the profinite group $\Gal(\Sh_{\eK^p}/\Sh_\eK)$ in the pro-\'etale topology on the scheme $\Sh_\eK(\eG,X)_E$. We denote this pro-\'etale $\Gal(\Sh_{\eK^p}/\Sh_\eK)$-torsor by $\bP_{0,\eK}$. We can make $\Gal(\Sh_{\eK^p}/\Sh_{\eK})$ explicit as follows. Denote by $\eZ(\bQ)^-$ the closure of $\eZ(\bQ)$ in $\eZ(\bA_f)$, and by $\eZ(\bQ)^-_\eK$ the intersection $\eZ(\bQ)^-\cap \eK$. Then 
\begin{align*}
	\Gal(\Sh_{\eK^p} /\Sh_{\eK}) = \eK_p / \eZ(\bQ)_{\eK,p}^-,
\end{align*}
 where $\eZ(\bQ)_{\eK,p}^-$ is the closure of the image of $\eZ(\bQ)^-_\eK$ under the projection $\eG(\bA_f) \to \eG(\bQ_p)$, see \cite[\textsection 1.5.8]{KSZ}.
 
 Since $\eK^p$ is neat, \cite[Lem. 1.5.7]{KSZ} implies that $\eZ(\bQ)_{\eK,p}^- \subset \eZ_{ac}(\bQ_p)$. Hence $\eK_p \to G^c(\bQ_p)$ factors through $\eK_p/\eZ(\bQ)_{\eK,p}^-$, and therefore $\Gal(\Sh_{\eK^p}/\Sh_{\eK})$ admits $\eK_p^c$ as a quotient. In particular, the composition (\ref{composition}) can be extended to
 \begin{align}\label{extendedcomposition}
 	\cG(\bZ_p) = \eK_p \twoheadrightarrow \Gal(\Sh_{\eK^p}/\Sh_\eK) \twoheadrightarrow \eK_p^c \hookrightarrow \cG^c(\bZ_p).
 \end{align}
However, note that the second surjection is never injective if $\eZ_{ac}$ is nontrivial, contrary to the claim in \cite[\textsection III, Rmk. 6.1]{Milne90}. For details, see \cite[(2)]{LSerrata}.
 
Let $\Sh(\eG,X)_E^\ad$ be the analytic adic space associated to $\Sh(\eG,X)_E$, and let $\bP_{0,\eK}^\ad$ be the pro-\'etale $\Gal(\Sh_{\eK^p}/\Sh_\eK)$-torsor on $\Sh(\eG,X)_E^\ad$ obtained from $\bP_{0,\eK}$. We would like to associate to $\bP_{0,\eK}^\ad$ a ``$\Gal(\Sh_{\eK^p}/\Sh_\eK)$-shtuka''. However, our machinery for shtukas only applies to groups which are algebraizable, and this is not a priori the case for $\Gal(\Sh_{\eK^p}/\Sh_\eK)$. Instead, we will replace $\Gal(\Sh_{\eK^p}/\Sh_{\eK})$ by the closely related group $\cG^c(\bZ_p)$, which \textit{is} algebraizable. Essentially, we view $\cG^c(\bZ_p)$ as a sort of ``algebraizable hull'' of $\Gal(\Sh_{\eK^p}/\Sh_\eK)$. In fact, at least when $\cG$ is reductive (and likely more generally), this can be made precise; see \cite[Prop. 4.8]{IKY}.

 Using the final two maps in (\ref{extendedcomposition}) we define the contracted product 
\begin{align}\label{eq-cover}
	\bP_{\eK} := \bP_{0,\eK}^\ad \times^{\Gal(\Sh_{\eK^p}/\Sh_{\eK})} \cG^c(\bZ_p),
\end{align} 
which is a pro-\'etale $\cG^c(\bZ_p)$-torsor $\bP_{\eK}$ on $\Sh_{\eK}(\eG,X)_E^\ad$. From every finite-dimensional $\bQ_p$-representation of $G^c$, we can obtain from $\bP_\eK$ a $\bZ_p$-local system on $\Sh(\eG,X)_E^\ad$, as follows. Let $\rho: G^c \to \GL(W)$ of $G^c$ be a finite dimensional $\bQ_p$-representation of $G^c$, and fix a $\bZ_p$-lattice $\Lambda \subset W$ such that $\rho(\cG^c(\bZ_p)) \subset \GL(\Lambda)$. The contracted product 
\begin{align*}
	\bL_{\rho,\Lambda}:= \bP_\eK \times^{\ul{\cG^c(\bZ_p)}} \ul{\Lambda}
\end{align*}
is a pro-\'etale $\bZ_p$-local system on $\Sh_\eK(\eG,X)_E^\ad$.  Denote by $\mu^c$ the cocharacter given by the composition of $\mu$ with $G \to G^c$.
 
% Following \cite{LS2018}, we can make $\bL_{\rho,\Lambda}$ more explicit, as follows. For each integer $r>0$, let $\eK_p^{(n)} = \ker(\cG(\bZ_p) \to \cG(\bZ/p^r\bZ))$, and let $\eK^{(n)} = \eK_p^{(n)}\eK^{p}$. Then
% \begin{align*}
% 	\Sh_{\eK^{(n)}}(\eG,X)_E \to \Sh_{\eK}(\eG,X)_E
% \end{align*}
%is a finite Galois cover, and the Galois group $\Gal(\Sh_{\eK^{(n)}}/\Sh_\eK)$ is a quotient of $\eK_p / \eK_p^{(n)}$. Define similarly $\eK_p^{c,\,(n)}:= \ker(\cG^c(\bZ_p) \to \cG^c(\bZ/p^n\bZ))$. For every $m > 0$, there is some $n(m) > 0$ with the property that $\eK_p^{c,\, (n(m))}$ (and therefore also $\eK_p^{(n(m))}$) acts trivially on $\Lambda / p^{(n(m))} \Lambda$. Let $\underline{\Lambda}_{p^{n(m)}}$ denote the constant $\bZ$-group scheme corresponding to $\Lambda / p^{n(m)} \Lambda$, and let $\bL_{\rho, \Lambda, m}$ be the \'etale sheaf of sections of the contracted product 
%\begin{align*}
%	\Sh_{\eK^{(n(m))}}(\eG,X)_E \times^{\eK_p/\eK_p^{(n(m))}} \underline{\Lambda}_{p^{n(m)}}.
%\end{align*}
%The limit $\varprojlim_m \bL_{\rho,\Lambda,m}$ 
%is a pro-\'etale $\bZ_p$-local system on $\Sh_\eK(\eG,X)_E$, and the corresponding $\bZ_p$-local system on $\Sh_\eK(\eG,X)_E^\ad$ is canonically identified with $\bL_{\rho,\Lambda}$.
 
 \begin{prop}\label{prop-genericfibershtuka}
 	There exists a $\cG^c$-shtuka $\sP_{\eK,E}$ over $\Sh_{\eK}(\eG,X)^\lozenge_E \to \Spd(E)$ with one leg bounded by $\mu^c$ which is associated to $\bP_{\eK}$ in the sense of \textup{\cite[\textsection 2.5]{PR2021}}. Furthermore, $\sP_{\eK,E}$ supports prime-to-$p$ Hecke correspondences.
 \end{prop}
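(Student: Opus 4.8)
The plan is to realize $\sP_{\eK,E}$ as the shtuka attached to the pro-\'etale torsor $\bP_\eK$ via the constructions of \cite[\textsection 2.5, \textsection 2.6]{PR2021}, the one substantive input being that the local systems $\bL_{\rho,\Lambda}$ attached to $\bP_\eK$ are de Rham. First I would recast $\bP_\eK$ in representation-theoretic terms: since $\Sh_\eK(\eG,X)_E^\ad$ is analytic over $\bQ_p$, Lemma~\ref{lem-locsys} identifies $\Loc_{\bZ_p}(\Sh_\eK(\eG,X)_E^\lozenge)$ with $\Loc_{\bZ_p}(\Sh_\eK(\eG,X)_E^\ad)$, and Lemma~\ref{lem-tannakiantorsors} turns $\bP_\eK$ into an exact tensor functor $\omega_{\bP_\eK}\colon\Rep_{\bZ_p}(\cG^c)\to\Loc_{\bZ_p}(\Sh_\eK(\eG,X)_E^\ad)$ carrying $(\Lambda,\rho)$ to $\bL_{\rho,\Lambda}$.

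The heart of the matter is to show that every $\bL_{\rho,\Lambda}$ is de Rham, with the associated Hodge-Tate cocharacter of $\omega_{\bP_\eK}$ (defined as for $\alpha$ above Lemma~\ref{lem-bdbymu}) in the conjugacy class of $\mu^c$. By the rigidity theorem of Liu and Zhu \cite{LZ17}, de Rham-ness may be checked at a single classical point of each connected component of $\Sh_\eK(\eG,X)_E$, and Hodge-Tate cocharacters in a family are locally constant, so it suffices to work at special points, which are Zariski dense. A special point $s$ arises from a morphism of Shimura data $(\eT,\{h_s\})\to(\eG,X)$ with $\eT$ a torus; by the construction of the canonical model and the reciprocity law recalled in Remark~\ref{rmk-E_K}, the stalk of $\bL_{\rho,\Lambda}$ at $s$ is, through the modification $\eT\to\eT^c$, a local Galois representation of exactly the form treated in Lemma~\ref{lem-crystalline} (we may shrink $\eK^p$ so that its neatness hypothesis holds), hence crystalline, in particular de Rham, with Hodge-Tate cocharacter $\mu^c$ straight from the definition of the reflex norm (\ref{eq-reflexnorm}). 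Liu-Zhu then spreads de Rham-ness, with the prescribed cocharacter, over all of $\Sh_\eK(\eG,X)_E^\ad$.

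Given this, I would invoke \cite[\textsection 2.5, \textsection 2.6]{PR2021}: with $\omega_{\bP_\eK}$ valued in de Rham local systems, the functor $D_{\dR}$ attaches to it, compatibly in $(\Lambda,\rho)$ and over every $(S^\sharp,f)$ in $\Sh_\eK(\eG,X)_E^\lozenge(S)$, a $B_{\dR}^+$-lattice inside the corresponding $B_{\dR}$-local system; this is precisely the datum needed to build a vector bundle shtuka over $S$ with one leg at $S^\sharp$, and applying the Tannakian formalism \cite[Thm.~19.5.1]{SW2020} fibrewise (as in Section~\ref{sub-prismtoshtuka}) upgrades the package to a $\cG^c$-shtuka $\sP_{\eK,E}$ over $\Sh_\eK(\eG,X)_E^\lozenge/\Spd(E)$. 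Unwinding definitions, the functor (\ref{eq-Gshtukastors}) recovers $\bP_\eK$ from $\sP_{\eK,E}$, so $\sP_{\eK,E}$ is associated to $\bP_\eK$ in the sense of \cite[\textsection 2.5]{PR2021} (compare \cite[Prop.~2.5.1]{PR2021}); and since at each geometric point its leg is governed by the Hodge filtration of type $\mu^c$ found above, it is bounded by $\mu^c$ in the sense of \cite[\textsection 2.3.4]{PR2021}. Finally, for $g\in\eG(\bA_f^p)$ the translation isomorphism $\Sh_{\eK^p}(\eG,X)_E\xrightarrow{\sim}\Sh_{g^{-1}\eK^p g}(\eG,X)_E$ is equivariant for the pro-\'etale $\Gal(\Sh/\Sh_\eK)$-tower, hence matches $\bP_{0,\eK}$, and so $\bP_\eK$, with its analogue at level $g^{-1}\eK^p g$; functoriality of all the constructions above lifts this to an isomorphism of shtukas, so $\sP_{\eK,E}$ supports prime-to-$p$ Hecke correspondences.

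I expect the de Rham step to be the main obstacle: it couples the Liu-Zhu rigidity theorem with a careful identification, at special points, of the stalks of the $\bL_{\rho,\Lambda}$ with the Galois characters produced by the reciprocity law of Section~\ref{sub-SV}, in particular getting the $\eT\to\eT^c$ modification and the Hodge-Tate cocharacter $\mu^c$ exactly right. Everything after that is a formal, if notationally involved, application of \cite[\textsection 2]{PR2021}.
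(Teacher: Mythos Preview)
Your proposal is correct and follows essentially the same route as the paper: reduce the de Rham property of $\bL_{\rho,\Lambda}$ via Liu--Zhu rigidity to special points, where it follows from Lemma~\ref{lem-crystalline}, and then invoke \cite[\textsection 2.5--2.6]{PR2021} formally. The paper adds two small precisions you glide over: it explains that the maximal torus $\eT$ at a special point is $\bR$-anisotropic modulo the center of $\eG$, so any $\eG^c$-representation restricted to $\eT$ automatically factors through $\eT^c$; and it cites \cite[Prop.~4.3.14]{KSZ} for the identification of the Hodge--Tate cocharacter with $\mu^c$ rather than claiming it is immediate from the reflex norm.
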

\begin{proof}
	When $\eG = \eG^c$ (i.e., $\eZ_s$ is trivial), we have $\Gal(\Sh_{\eK^p}/\Sh_\eK) = \cG(\bZ_p) = \cG^c(\bZ_p)$, and in this case the proposition reduces to \cite[Prop. 4.1.2]{PR2021}. In general it follows similarly once one proves that $\bP_\eK$ is a de Rham $\cG^c(\bZ_p)$-torsor in the sense of \cite[Def. 2.5.5]{PR2021} and that the Hodge-Tate cocharacter for $\bP_\eK$ is given by $\mu^c$. 
	
	Fix a representation $(\Lambda, \rho)$ of $\eG^c$. To prove $\bL_{\rho,\Lambda}$ is de Rham, one proceeds in the same way as the proof of \cite[Thm. 1.2]{LZ17}. In particular, by \cite[Thm. 3.9(iv)]{LZ17} and the fact that the set of special points in each connected component of $\Sh_\eK(\eG,X)$ is non-empty \cite[Lem. 13.5]{Milne2005}, we reduce to showing that if $x=[h,a]_\eK\in\Sh_\eK(\eG,X)$ is a special point with $h$ factoring through a maximal torus $\eT \subset \eG$, then the stalk $\bL_{\rho,\Lambda, \bar{x}}$ of $\bL_{\rho,\Lambda}$ at a geometric point $\bar{x}$ above $x$ is de Rham. 
	
	Since $\eT_\bR$ stabilizes the special point $x$, it is $\bR$-anisotropic modulo the center of $\eG$, and therefore the restriction to $\eT$ of any representation of $\eG^c$ factors through $\eT^c$. If $\rho$ is a representation of $\eG^c$, let $\rho'$ denote its restriction to $\eT^c$. As in \cite[Lem. 4.8]{LZ17}, we see that $\bL_{\rho,\Lambda,\bar{x}}$ is isomorphic to $r(\mu,\rho')_{\eK,p}$. Hence the fact that $\bL_{\rho,\Lambda,\bar{x}}$ is de Rham follows from Lemma \ref{lem-crystalline}.
	
	Finally, for the boundedness by $\mu^c$, we first observe that the arguments of \cite[Prop. 4.3.14]{KSZ} show that the Hodge-Tate cocharacter of $r(\mu, \rho')_{\eK,p}$ is $\mu^c$. Thus the same is true of $\bL_{\rho,\Lambda, \bar{x}}$, independent of the choice of $x$. Therefore by \cite[Prop. 2.5.3 and Def. 2.6.6]{PR2021} the $\cG^c$-shtuka $\sP_{\eK,E}$ is bounded by $\mu^c$.

%	This doesn't follow exactly from \cite{LZ17}, since $\eG^c$ there is the quotient of $\eG$ by the maximal anisotropic torus of $\eZ^\circ$ which is split over $\bR$ (this coincides with $\eZ_{ac}$ when $\eZ^\circ$ splits over a CM extension of $\eE$, but not in general, see \cite[footnote 7]{KSZ}). However, the arguments which prove \cite[Thm. 1.2]{LZ17} work just as well in this case, with Lemma \ref{lem-crystalline} replacing \cite[Lem. 4.4]{LZ17}. Hence the result follows from this analog of \cite[Thm. 1.2]{LZ17} along with \cite[Thm. 3.9(iv)]{LZ17}.
%	
%	In general, for each $\rho$ and $\Lambda$ , the explicit description for $\bL_{\Lambda,\rho}$ given above shows that $\bL_{\Lambda,\rho}\otimes_{\bZ_p} \bQ_p$ agrees with the \'etale $\bQ_p$-local system $\bL_{\rho,W}$ described in \cite[\textsection 4.2]{LZ17}. Hence it follows from \cite[Thm. 1.2]{LZ17} combined with \cite[Thm. 3.9(iv)]{LZ17} that $\bP_\eK$ is a de Rham $\cG^c(\bZ_p)$-torsor in the sense of \cite[Def. 2.5.5]{PR2021}, and that the Hodge-Tate cocharacter for $\cG^c(\bZ_p)$ is given by $\mu^c$. Then $\sP_{\eK,E}$ is the $\cG^c$-shtuka associated to $\bP_\eK$ by \cite[Def. 2.5.7]{PR2021}. \edit{fix this proof now that we aren't assuming the center splits over a CM extension}
\end{proof}

\subsection{The conjecture of Pappas and Rapoport} \label{section-conj}
In this section we state the conjecture of Pappas and Rapoport on the existence of canonical integral models for Shimura varieties. Let $(G,b,\mu)$ be a local Shimura datum in the sense of \cite[Def. 24.1.1]{SW2020}, that is, suppose $G$ is a reductive $\bQ_p$-group, $\mu$ is a geometric conjugacy class of minuscule cocharacters, and $b$ is a $\sigma$-conjugacy class of elements of $G(\breve{\bQ}_p)$, which we further assume lies in the set $B(G,\mu^{-1})$ of neutral acceptable elements in the sense of \cite[Def. 2.3]{RV2014}. We take also a parahoric integral model $\cG$ for $G$ over $\bZ_p$. We write $E$ for the reflex field of $\mu$, and $k$ for an algebraic closure of the residue field of $E$

We write $\cM^\textup{int}_{\cG,b,\mu}$ for the \textit{integral local Shimura variety associated with $(\cG,b,\mu)$} \cite[\textsection 25.1]{SW2020}. This is the functor $\cM^\textup{int}_{\cG,b,\mu}$ on $\Perf_k$ that sends $S$ to the set of isomorphism classes of tuples $(S^\sharp, \sP, \phi_\sP, i_r)$, where $S^\sharp$ is an untilt of $S$ over $\Spa(\cO_{\breve{E}})$, $(\sP, \phi_{\sP})$ is a $\cG$-shtuka over $S$ with one leg along $S^\sharp$ bounded by $\mu$, and $i_r$ is an isomorphism of $G$-torsors
\begin{align*}
	i_r: G \res_{\cY_{[r,\infty)}(S)} \xrightarrow{\sim} \sP\res_{\cY_{[r,\infty)}(S)}
\end{align*}
for large enough $r$, under which $\phi_\sP$ is identified with $\phi_b:= b \times \Frob_S$. Here an isomorphism of tuples $(S^\sharp, \sP, \phi_{\sP}, i_{r_1}) \xrightarrow{\sim} ((S')^\sharp, \sP', \phi_{\sP'}, i'_{r_2})$ is an pair of isomorphism $(\alpha, \beta)$ where $\alpha: S^\sharp \xrightarrow{\sim} (S')^\sharp$ is an isomorphism whose tilt factors as $(S^\sharp)^\flat \xrightarrow{\sim} S \xleftarrow{\sim} ((S')^\sharp)^\flat$, and where $\beta: (\sP, \phi_{\sP}) \xrightarrow{\sim} (\sP', \phi_{\sP'})$ is an isomorphism of $\cG$-shtukas which satisfies $i'_r \circ i_r^{-1} = \beta$ for $r \ge \max(r_1, r_2)$. 

When $p \ne 2$ and $(G,b,\mu)$ is of abelian type, the main result of \cite{PR2022} states that $\cM^\textup{int}_{\cG,b,\mu}$ is representable by a formal scheme $\mathscr{M}_{\cG,b,\mu}$ which is normal and flat and formally locally of finite type over $\cO_{\breve{E}}$ (in fact, this holds slightly more generally, see \cite{PR2022}). 

The formation of integral local Shimura varieties is functorial. Indeed, let $(G,b,\mu) \to (G',b',\mu')$ be a morphism of local Shimura data, and suppose the map $G \to G'$ extends to $\cG \to \cG'$. Then we obtain a morphism of integral local Shimura varieties
\begin{align}\label{eq-functorialityILSV}
	\rho: \cM^\textup{int}_{\cG,b,\mu} \to \cM^\textup{int}_{\cG',b',\mu'} \times_{\Spd(\cO_{\breve{E}'})} \Spd(\cO_{\breve{E}})
\end{align}
given on points by pushing forward shtukas along $\cG \to \cG'$. Moreover, the morphism $\rho$ has the following property. Suppose $\sP^\univ$ and $(\sP')^{\univ}$ are the universal shtukas over $\cM^\textup{int}_{\cG,b,\mu}$ and $\cM^\textup{int}_{\cG,b,\mu}$, respectively. Then 
\begin{align}\label{eq-univpullback}
	\rho^\ast (\sP')^{\univ} = \sP^\univ \times^{\cG} \cG'.
\end{align}
Indeed, this follows from the definition of (\ref{eq-functorialityILSV}) and those of the respective universal objects.

Let $(G,b,\mu)$ be a local Shimura datum, and let $\cG$ be a parahoric model for $G$ over $\bZ_p$. We denote by $X_\cG(b,\mu^{-1})$ the \textit{$(b,\mu^{-1})$-admissible locus} in the Witt vector affine Grassmannian. This is the functor which assigns to perfect $k$-algebras $R$ the set of isomorphism classes of pairs $(\cP,\alpha)$, where $\cP$ is a $\cG$-torsor over $\Spec(W(R))$, and $\alpha$ is an isomorphism of $\cG$-torsors 
	\begin{align*}
		\alpha: \cG_{W(R)[1/p]} \xrightarrow{\sim} \cP_{W(R)[1/p]}
	\end{align*}
over $W(R)[1/p]$ such that $\phi_\cP = \alpha \circ \phi_b \circ \phi^\ast(\alpha)^{-1}$ defines the structure of a meromorphic Frobenius crystal (in the sense of \cite[Def. 2.3.3]{PR2021}), such that the $\cG$-shtuka $(\sP,\phi_\sP)$ associated to $(\cP, \phi_{\cP})$ by \cite[Thm. 2.3.5]{PR2021} has a leg along the divisor $p=0$ and is bounded by $\mu$. We note that $X_\cG(b,\mu^{-1})$ is sometimes referred to as the affine Deligne-Lusztig variety associated to $\cG$, $b$, and $\mu^{-1}$, but we reserve that moniker for its $k$-points.

The functor $X_\cG(b,\mu^{-1})$ is representable by a perfect scheme which is locally perfectly of finite type over $k$ \cite[\textsection 3.3]{PR2021}. Moreover, by \cite[Prop. 2.30]{Gleason2022}, there is a natural isomorphism
\begin{align}\label{eq-reducedlocus}
	(\cM^\textup{int}_{\cG,b,\mu})_\red \cong X_\cG(b,\mu^{-1}),
\end{align}
where $(\cM^\textup{int}_{\cG,b,\mu})_\red$ denotes the reduced locus of the $v$-sheaf $\cM^\textup{int}_{\cG,b,\mu}$ in the sense of \cite{Gleason2020}. Let $K = \cG(\bZ_p)$, $\breve{K} = \cG(\breve{\bZ}_p)$, and let $\textup{Adm}^K(\mu^{-1})$ denote the $\mu^{-1}$-admissible locus in the sense of \cite{Rapoport2005}. Then, the isomorphism (\ref{eq-reducedlocus}) implies, in particular, that we have the identity
\begin{align*}
	\cM^\textup{int}_{\cG,b,\mu}(\Spd(k)) = X_\cG(b,\mu^{-1})(k) = \{g\breve{K} = G(\breve{\bQ}_p)/ \breve{K} \mid g^{-1}b\phi(g) \in \textup{Adm}^K(\mu^{-1})\}.
\end{align*}
In other words, $\cM^\textup{int}_{\cG,b,\mu}(\Spd(k))$ is the affine Deligne-Lusztig variety associated to $\cG$, $b$, and $\mu^{-1}$. If $b \in \textup{Adm}^K(\mu^{-1})$, then $\cM^\textup{int}_{\cG,b,\mu}$ has a canonical $\Spd(k)$-valued \textit{base point},
\begin{align}\label{eq-basepoint}
	x_0 \in \cM^\textup{int}_{\cG,b,\mu}(\Spd(k)),
\end{align} 
corresponding to the trivial coset in $G(\breve{\bQ}_p)/\breve{K}$. From the perspective of the moduli functor, the base point associates to $S$ in $\Perf_k$ the tuple $(S, \cG\res_{\cY_{[0,\infty)}},\phi_b, \id)$. 

By \cite[Thm 2.]{Gleason2020}, $\cM^\textup{int}_{\cG,b,\mu}$ is a kimberlite (see \textit{loc. cit.} for details on this terminology). In particular, there is a continuous specialization map
\begin{align*}
	\textup{sp}: |\cM^\textup{int}_{\cG,b,\mu}| \to |(\cM^\textup{int}_{\cG,b,\mu})_\red| = |X_\cG(b,\mu^{-1})|.
\end{align*}
The \textit{formal completion of $\cM^\textup{int}_{\cG,b,\mu}$ along a point $x \in \cM^\textup{int}_{\cG,b,\mu}(\Spd(k))$} is the sub-$v$-sheaf $\widehat{\cM^\textup{int}_{\cG,b,\mu}}_{/x}$ whose points for $S$ in $\Perfk$ are given by
\begin{align*}
		\widehat{\cM^\textup{int}_{\cG,b,\mu}}_{/x}(S) = \{y: S \to \cM^\textup{int}_{\cG,b,\mu} \mid \textup{sp} \circ y(|S|) \subset \{x\}\}.
\end{align*}	

Suppose $\rho:(G,b,\mu) \to (G',b',\mu')$ is a morphism of local Shimura data which induces an isomorphism $\rho_\ad: G_\ad \xrightarrow{\sim} G'_\ad$, and suppose moreover that $\rho$ extends to a morphism $\cG \to \cG'$ of parahoric models. If $\cG$ and $\cG'$ correspond to the same point in the common building of $G_\ad$ and $G'_\ad$, then by \cite[Prop. 5.3.1]{PR2022}, the morphism (\ref{eq-functorialityILSV}) induces an isomorphism
\begin{align}\label{eq-functorialitycompletions}
	\hat{\rho}: \widehat{\cM^\textup{int}_{\cG,b,\mu}}_{/x} \xrightarrow{\sim} \widehat{\cM^\textup{int}_{\cG',b',\mu'}}_{/\rho(x)} \times_{\Spd (\cO_{\breve{E}'})} \Spd(\cO_{\breve{E}})
\end{align}
for any point $x \in \cM^\textup{int}_{\cG,b,\mu}(\Spd(k))$. In particular, if $G = T$ is a torus, and $\cG = \cT$ is the identity component of the N\'eron model for $T$, then the structure morphism to $\Spd(\cO_{\breve{E}})$ defines an isomorphism
\begin{align}\label{eq-trivtorus}
	\widehat{\cM^\textup{int}_{\cT,b,\mu}}_{/x} \xrightarrow{\sim} \Spd(\cO_{\breve{E}})
\end{align}
for any $x \in \cM^\textup{int}_{\cT,b,\mu}$, by applying the functoriality (\ref{eq-functorialitycompletions}) to the morphism from $(T,b,\mu)$ to the trivial local Shimura datum. 

Let us now return to the general global setting of section \ref{sub-SV} in order to state the conjecture of Pappas and Rapoport \cite{PR2021}. We retain the notation from the beginning of section \ref{sub-SV} as well, so $(\eG, X)$ is a global Shimura datum, $\mu$ is the corresponding geometric conjugacy class of cocharacters for $\eG$, $\eE$ is the reflex field, $G = \eG_{\bQ_p}$ is the corresponding $p$-adic group, $E$ is the local reflex field, and $\eK = \eK_p \eK^p \subset \eG(\bA_f)$ is the level subgroup with $\eK_p = \cG(\bZ_p)$ is a parahoric subgroup and $\eK^p \subset \eG(\bA_f^p)$ is neat. We furthermore have the cuspidal quotient $\eG^c$, its corresponding $p$-adic group $G^c$, and the parahoric group scheme $\cG^c$. 

Suppose we are given a system of normal integral models $\sS_\eK$ of $\Sh_{\eK}(\eG,X)_E$, each equipped with a $\cG^c$-shtuka $\sP_\eK$ defined over $\sS_\eK^\lozenge \to \Spd(\cO_E)$ which is bounded by $\mu$, and which extends the $\cG^c$-shtuka $\sP_{\eK,E}$ over $\Sh_{\eK}(\eG,X)_E^\lozenge \to \Spd(E)$ defined in Proposition \ref{prop-genericfibershtuka}. Then for any point $x \in \sS_\eK(k)$, we have a canonically defined $\sigma$-conjugacy class $[b_x] \in B(G^c)$ coming from $\sP_\eK$. Indeed, by \cite[Ex. 2.4.9]{PR2021}, the pullback $x^\ast \sP_\eK$ defines a $\cG^c$-torsor $\sP_x$ over $\Spec(W(k))$ along with an isomorphism
\begin{align*}
	\phi_x: \phi^\ast (\sP_x)[1/p] \xrightarrow{\sim} \sP_x[1/p],
\end{align*}
where here $\phi$ denotes the Frobenius for $W(k)$. Then any choice of trivialization yields an element $b_x \in G^c(\breve{\bQ}_p)$, and changing the trivializating corresponds to applying $\sigma$-conjugation by some element of $\cG^c(\breve{\bZ}_p)$. Since the shtuka $\sP_\eK$ is bounded by $\mu^c$, the resulting element of $B(G^c)$ lies in $B(G^c,(\mu^c)^{-1})$. Denote by $x_0$ the base point of $\cM^\textup{int}_{\cG^c,b_x,\mu^c}(\Spd(k))$ as in \ref{eq-basepoint}. 

The following is a slightly modified version of \cite[Conj. 4.2.2]{PR2021}.

\begin{conj}[Pappas-Rapoport] \label{conj-PR}
	Fix a parahoric subgroup $\eK_p$ with corresponding  $\bZ_p$-group scheme $\cG$. Then for every neat compact open subgroup $\eK^p \subset \eG(\bA_f^p)$, there exists a normal flat model $\sS_\eK$ of $\Sh_\eK(\eG,X)_E$ over $\cO_E$, such that the system $(\sS_\eK)_{\eK^p}$ satisfies the following properties:
	\begin{enumerate}[(a)]
		\item For every discrete valuation ring $R$ of characteristic $(0,p)$ over $\cO_E$,
		\begin{align*}
			\left(\varprojlim_{\eK^p} \Sh_\eK(\eG,X)_E\right)(R[1/p]) = \left(\varprojlim_{\eK^p} \sS_\eK\right)(R).
		\end{align*}
		If $\Sh_\eK(G,X)_E$ is proper over $\Spec(E)$ then $\sS_\eK$ is proper over $\Spec(\cO_E)$. In addition, the system $\sS_\eK$ supports prime-to-$p$ Hecke correspondences, i.e., for $g \in \eG(\bA_f^p)$ and ${\eK'}^p$ with $g{\eK'}^pg^{-1} \subset \eK^p$, there are finite \'etale morphisms $[g]: \sS_{\eK'} \to \sS_\eK$ which extend the natural maps $[g]: \Sh_{\eK_p{\eK'}^p}(\eG,X)_E \to \Sh_{\eK_p\eK^p}(\eG,X)_E$. 
		\item The $\cG^c$-shtuka $\sP_{\eK,E}$ extends to a $\cG^c$-shtuka $\sP_\eK$ on $(\sS_\eK)^{\lozenge \slash}$.
		\item For $x \in \sS_\eK(k)$ and $b_x$ defined as above, there is an isomorphism of $v$-sheaves over $\Spd(\cO_E)$
		\begin{align*}
			\Theta_x: \widehat{\cM^\textup{int}_{\cG^c,b_x,\mu^c}}_{/x_0} \xrightarrow{\sim} (\widehat{{\sS_{\eK}}}_{/x})^\lozenge,
		\end{align*}
		under which the pullback shtuka $\Theta_x^\ast(\sP_\eK)$ coincides with the universal shtuka on $\cM^\textup{int}_{\cG^c,b_x,\mu^c}$. 
	\end{enumerate}
\end{conj}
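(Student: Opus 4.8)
The plan is to verify conditions (a)--(c) of Conjecture \ref{conj-PR} for the system $(\sS_\eK)_{\eK^p}$, where $\sS_\eK = \coprod_{i} \Spec \cO_{E_i}$ with $E_i \cong E_\eK$. Condition (a) is elementary. Each $\cO_{E_i}$ is a discrete valuation ring, so $\sS_\eK$ is normal and flat over $\cO_E$ and is finite, hence proper, over $\cO_E$. For a discrete valuation ring $R$ of mixed characteristic $(0,p)$ over $\cO_E$ one has $R[1/p] = \operatorname{Frac}(R)$, while $\cO_{E_i}$ is the integral closure of $\cO_E$ in $E_i$; hence every $\cO_E$-morphism $\Spec R[1/p] \to \Spec E_i$ extends uniquely to $\Spec R \to \Spec \cO_{E_i}$, and passing to the limit over $\eK^p$ gives the identity in (a). For the prime-to-$p$ Hecke correspondences, the key point is that each $E_\eK/E$ is unramified: the composite of local reciprocity at $v$ with $r(\mu)$ carries the inertia subgroup into $\eK_p = \cT(\bZ_p)$ (the local reflex norm sends $\cO_E^\times$ into $\cT(\bZ_p)$), which is trivial in $\eT(\bQ)\backslash\eT(\bA_f)/\eK$, so $v$ is unramified in $\eE_\eK$. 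Consequently each $[g]\colon \Spec\cO_{E_{\eK'}} \to \Spec\cO_{E_\eK}$ is finite \'etale, and these assemble to the required finite \'etale morphisms $[g]\colon \sS_{\eK'} \to \sS_\eK$.

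For condition (b), fix $\eK = \cT(\bZ_p)\eK^p$ with $\eK^p$ small enough that $r(\mu)_{\eK,p,\textup{loc}}$ lands in $\cT^c(\bZ_p)$; by Lemma \ref{lem-crystalline} it then defines a $\cT^c$-valued crystalline representation $\alpha_\eK$ of $\Gamma_{E_\eK}$, and Definition \ref{def-greptoshtuka} produces a $\cT^c$-shtuka $\sP^{\alpha_\eK}$ over $\Spd(\cO_{E_\eK}) = \Spd(\cO_{E_i})$; letting $i$ vary, these glue to a $\cT^c$-shtuka $\sP_\eK$ over $\sS_\eK^\lozenge / \Spd(\cO_E)$. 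Since the Hodge--Tate cocharacter of $\alpha_\eK$ is $\mu^c$ (\cite[Prop. 4.3.14]{KSZ}), Lemma \ref{lem-bdbymu} shows $\sP_\eK$ is bounded by $\mu^c$. That $\sP_\eK$ extends $\sP_{\eK,E}$ follows from Lemma \ref{lem-genericfiber}: the pullback of $\sP^{\alpha_\eK}$ to $\Spd(E_i)$ is the de Rham shtuka $\sP^{\alpha_\eK}_\dR$, which by \cite[Prop. 2.5.1]{PR2021} is determined by the pro-\'etale $\cT^c(\bZ_p)$-torsor attached to $r(\mu)_{\eK,p,\textup{loc}}$ together with the filtration $\mu^c$ --- exactly the datum defining $\sP_{\eK,E}|_{\Spd(E_i)^\lozenge}$ by Proposition \ref{prop-genericfibershtuka}. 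Compatibility with the prime-to-$p$ Hecke correspondences is then the functoriality of $\alpha \mapsto \sP^\alpha$ along the unramified base changes $[g]$ (Lemma \ref{lem-functorialities}(b)), together with the compatibility of the $r(\mu)_{\bullet,p,\textup{loc}}$; alternatively, it follows from the fully faithfulness in Theorem \ref{technicalthm}.

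For condition (c), fix $x \in \sS_\eK(k)$ on the component $\Spec\cO_{E_i}$. Its $\sigma$-conjugacy class $b_x \in B(T^c,(\mu^c)^{-1})$ is that of $x^\ast\sP_\eK = x^\ast\sP^{\alpha_\eK}$, the $\cT^c$-torsor with Frobenius obtained by evaluating $U(\alpha_\eK)$ on the crystalline prism $(W(k),(p))$ lying over $x$. The isomorphism $\Theta_x$ exists by the representability results of \cite{PR2022} together with \eqref{eq-trivtorus} and the explicit form of $\sS_\eK$, the only extra ingredient being that $E_\eK/E$ is unramified, so that $(\widehat{\sS_\eK}_{/x})^\lozenge \cong \Spd(\cO_{\breve E})$ matches $\widehat{\cM^\textup{int}_{\cT^c,b_x,\mu^c}}_{/x_0}$ over $\Spd(\cO_E)$. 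The substantive assertion is that $\Theta_x^\ast\sP_\eK \cong \sP^\univ$. Both pull back at the closed point to $\phi_{b_x}$ --- for $\sP^\univ$ by the definition \eqref{eq-basepoint} of the base point, and for $\Theta_x^\ast\sP_\eK$ by the definition of $b_x$ --- so one must compare two deformations over $\Spf\cO_{\breve E}$. To do so I would reduce, via the Tannakian formalism for $\Rep_{\bZ_p}(\cT^c)$ and the functorialities \eqref{eq-univpullback}, \eqref{eq-functorialitycompletions} and Lemma \ref{lem-functorialities}, to vector-bundle shtukas, and then --- decomposing a representation of $\cT^c$ over a splitting field into rank-one pieces --- to a Lubin--Tate datum $(\Res_{F/\bQ_p}\bG_m, b, \mu_{\textup{LT}})$. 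There $\cM^\textup{int}$ is the Lubin--Tate deformation space $\Spf\cO_{\breve F}$, the universal shtuka is the one attached to the Tate module of the universal Lubin--Tate formal $\cO_F$-module, and $\sP_\eK$ corresponds to the Lubin--Tate crystalline character, so the identification amounts to matching the prismatic $F$-crystal of that character --- computed via \cite[Cons. 5.5]{BS21} --- with the crystalline Dieudonn\'e module of the Lubin--Tate formal group.

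The main obstacle is this last reduction: organizing the descent in \eqref{eq-functorialitycompletions} and the bookkeeping of the various fields of definition, and then carrying out the Lubin--Tate computation identifying the universal shtuka on the deformation space with the shtuka attached by the prismatic construction to the Lubin--Tate character. By contrast, once the generic-fibre comparison of Lemma \ref{lem-genericfiber} and the fully faithful functor of Theorem \ref{technicalthm} are in hand, conditions (a) and (b) --- including the Hecke-equivariance of $\sP_\eK$ --- are essentially formal.
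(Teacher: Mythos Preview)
Your treatment of (a) and (b) matches the paper's essentially verbatim: unramifiedness of $E_\eK/E$ via the local reflex norm sending $\cO_E^\times$ into $\cT(\bZ_p)$ (this is the paper's Lemma~\ref{lem-fmlcomp}), construction of $\sP_\eK$ from the crystalline representation $r(\mu)_{\eK,p,\textup{loc}}$, boundedness via Lemma~\ref{lem-bdbymu}, and the generic-fibre comparison via Lemma~\ref{lem-genericfiber}.

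For (c), however, your reduction to Lubin--Tate is not the paper's, and as written it has a gap. You propose to pass, via the Tannakian formalism, to vector-bundle shtukas and then to ``decompose a representation of $\cT^c$ over a splitting field into rank-one pieces.'' But the functoriality \eqref{eq-functorialitycompletions} you want to invoke operates on \emph{morphisms of local Shimura data} $(T_1,b_1,\mu_1)\to(T,b_x,\mu)$, not on representations of $T^c$; splitting a representation into characters does not produce such a morphism, and the individual rank-one summands of an arbitrary representation of $T^c$ are not Lubin--Tate characters for any evident field $F$. The paper instead produces a single explicit morphism of tori
\[
f_1:\ T_1=\Res_{E_\eK/\bQ_p}\bG_m \xrightarrow{\ N_{E_\eK/E}\ } \Res_{E/\bQ_p}\bG_m \hookrightarrow (\Res_{\eE/\bQ}\bG_m)_{\bQ_p} \xrightarrow{\ r(\mu)^{\mathrm{alg}}\ } T
\]
with $f_1\circ\mu_1=\mu$, giving a genuine morphism of local Shimura data and hence, via \eqref{eq-functorialitycompletions}, an isomorphism $\widehat{\cM^{\mathrm{int}}_{\cT_1,b_1,\mu_1}}_{/x_1}\xrightarrow{\sim}\widehat{\cM^{\mathrm{int}}_{\cT,b_x,\mu}}_{/x_0}$. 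The crucial input you are missing is that, by \cite[Prop.~4.3.14]{KSZ}, the restriction of $r(\mu)_{\eK,p,\textup{loc}}$ to the inertia group $\Gamma_{E_\eK,0}$ is exactly $f_1\circ\mathrm{LT}$; combined with Lemma~\ref{lem-functorialities} this matches $\sP_\eK|_{\Spd(\cO_{\breve E})}$ with the pushforward along $f_1$ of the shtuka attached to the Lubin--Tate character, so the whole comparison reduces to a single Lubin--Tate computation (the paper's Proposition~\ref{prop-LT}). Your final paragraph correctly identifies what that computation should be --- the RZ/EL description of $\cM^{\mathrm{int}}_{\cT_1,b_1,\mu_1}$, uniqueness of the Lubin--Tate deformation, and the identification of the prismatic Dieudonn\'e crystal with the prismatic $F$-crystal of the Tate module via \cite{ALB} and \cite{DLMS} --- but the route you sketch to get there does not supply the morphism of local Shimura data needed to apply \eqref{eq-functorialitycompletions}. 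Replace the Tannakian splitting step by the reflex-norm map $f_1$ and the argument goes through.
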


In the case where $\eG$ is itself cuspidal (i.e., $\eG = \eG^c$), this is exactly \cite[Conj. 4.2.2]{PR2021}, which is proved in \textit{loc. cit.} for Shimura varieties of Hodge type, that is, those for which there is an embedding of $(\eG, X)$ into a Shimura datum for a Siegel-type Shimura datum. We note that cuspidality is automatic in the Hodge-type case by \cite[Lem. 5.1.2]{KSZ}, so the more general formulation of the conjecture is unnecessary for the results of \cite{PR2021}. For Shimura varieties of toral type (the case of interest below) or of abelian type, there is no reason to expect cuspidality in general. We have formulated the more general version of the conjecture with this in mind.

Pappas and Rapoport moreover prove that an integral model satisfying the properties of Conjecture \ref{conj-PR} is uniquely determined \cite[Thm. 4.2.4]{PR2021}.

\begin{thm} [Pappas-Rapoport]
	There is at most one system of normal flat models $\sS_\eK$ of $\Sh_\eK(\eG,X)_E$ over $\cO_E$, for $\eK = \eK_p \eK^p$ with variable neat $\eK^p$, which satisfies the properties in Conjecture \ref{conj-PR}. 
\end{thm}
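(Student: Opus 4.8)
Following \cite[Thm. 4.2.4]{PR2021} (itself modeled on the argument of \cite{Pappas2022}), the plan is to show that any two systems of normal flat models $(\sS_\eK)_{\eK^p}$ and $(\sS'_\eK)_{\eK^p}$ satisfying the properties of Conjecture \ref{conj-PR} are isomorphic. This reduces to constructing, for each $\eK = \eK_p\eK^p$, an $\cO_E$-morphism $f_\eK\colon \sS_\eK \to \sS'_\eK$ restricting to the identity on the common generic fiber $\Sh_\eK(\eG,X)_E$, compatible with the transition maps and with the prime-to-$p$ Hecke correspondences, and carrying $\sP'_\eK$ to $\sP_\eK$. Indeed, the symmetric construction then produces $g_\eK\colon \sS'_\eK \to \sS_\eK$, and since each model is flat over $\cO_E$ its generic fiber is schematically dense while the targets are separated, so $g_\eK \circ f_\eK$ and $f_\eK \circ g_\eK$, each extending the identity on generic fibers, are the identity. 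Hence $f_\eK$ is an isomorphism and uniqueness follows.

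To construct $f_\eK$, I would first build it over the complement $U \subseteq \sS_\eK$ of a closed subset of codimension $\ge 2$. Over the generic fiber $f_\eK$ is the identity. At a generic point $\eta$ of an irreducible component of the special fiber, the local ring $\cO_{\sS_\eK,\eta}$ is a discrete valuation ring of mixed characteristic $(0,p)$ over $\cO_E$; pulling back the pro-(finite \'etale) tower $\varprojlim_{\eK^p}\Sh_\eK(\eG,X)_E$ along $\Spec \cO_{\sS_\eK,\eta} \to \sS_\eK$ and passing to a cofinal finite \'etale cover that splits the generic point, property (a) applied to the system $(\sS'_\eK)$ extends the identity on generic fibers over $\Spec \cO_{\sS_\eK,\eta}$; Galois descent along this finite \'etale cover (using canonicity of the extension produced by (a)) yields the local morphism. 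This defines $f_\eK$ on such a $U$.

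To extend $f_\eK$ over all of $\sS_\eK$ I would use property (c). For $x \in \sS_\eK(k)$, property (a) together with the agreement of $\sP_\eK$ and $\sP'_\eK$ over the generic fiber identifies a corresponding point $x' \in \sS'_\eK(k)$ and shows $[b_x] = [b_{x'}] \in B(G^c,(\mu^c)^{-1})$. Property (c) then provides isomorphisms of $v$-sheaves $\widehat{\sS_\eK}_{/x}^\lozenge \xleftarrow{\sim} \widehat{\cM^\textup{int}_{\cG^c,b_x,\mu^c}}_{/x_0} \xrightarrow{\sim} \widehat{\sS'_\eK}_{/x'}^\lozenge$ matching the pulled-back shtukas with the universal shtuka, hence an isomorphism $\widehat{\sS_\eK}_{/x}^\lozenge \xrightarrow{\sim} \widehat{\sS'_\eK}_{/x'}^\lozenge$. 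Since $\widehat{\sS_\eK}_{/x}$ and $\widehat{\sS'_\eK}_{/x'}$ are Noetherian, normal, flat $\cO_E$-formal schemes, and the associated $v$-sheaf functor is fully faithful on this class (by the results on $v$-sheaves recalled in \cite[\textsection 2]{PR2021} and in \cite{Gleason2020}), this descends to an isomorphism of formal schemes $\widehat{\sS_\eK}_{/x} \xrightarrow{\sim} \widehat{\sS'_\eK}_{/x'}$, which agrees with $f_\eK|_U$ over the punctured formal completion because both are pinned down by the same shtuka data. Finally one glues $f_\eK|_U$ with these formal-local isomorphisms into a global morphism $f_\eK\colon \sS_\eK \to \sS'_\eK$ by a Beauville--Laszlo-type formal gluing argument using normality of $\sS_\eK$; compatibility with Hecke and transition maps is automatic from the canonicity of each step.

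\textbf{Main obstacle.} I expect the principal difficulty to be this last step: converting the $v$-sheaf identification of formal completions furnished by (c) into an honest morphism of formal schemes, and then assembling the formal-local morphisms together with the generic-fiber morphism into a single morphism of schemes. This rests on the full faithfulness of the $v$-sheaf functor on normal flat $p$-adic formal schemes and on a formal-gluing statement for normal schemes, and one must verify that all local comparisons are mutually compatible — which is ultimately guaranteed by the rigidity coming from the shtukas $\sP_\eK,\sP'_\eK$ and the base-point normalization built into the integral local Shimura varieties. A secondary subtlety is making precise the bijection $x \leftrightarrow x'$ on special fibers and the equality of $\sigma$-conjugacy classes purely from property (a) and the agreement of the two shtukas over the generic fiber.
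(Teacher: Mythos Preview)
Your proposal is correct and follows essentially the same approach as the paper. The paper's own proof simply invokes \cite[Thm.~4.2.4]{PR2021}---whose argument (modeled on \cite{Pappas2022}) you have accurately outlined---and notes that the general case follows verbatim once one replaces the integral local Shimura variety for $\cG$ with that for $\cG^c$.
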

\begin{proof}
	The proof in \cite[Thm. 4.2.4]{PR2021} addresses the case where $\eG = \eG^c$. The proof in general follows by replacing the integral local Shimura variety for $\cG$ with the one for $\cG^c$.
\end{proof}

\subsection{Proof of Theorem \ref{thm-A}} \label{section-tori}\label{sub-proof}
%
%Let $\eT$ be a $\bQ$-torus which splits over a CM field, and let $\eT^c = \eT/\eT_s$ be the quotient of $\eT$ by the maximal $\bQ$-anisotropic subtorus of $\eT$ which splits over $\bR$. A $\bQ$-torus is \textit{cuspidal} in the sense of \cite[Def. 1.5.4]{KSZ} if its $\bQ$-split rank is equal to its $\bR$-split rank. By the results of \cite[\textsection 1.5]{KSZ} (see in particular footnote 7), the torus $\eT^c$ is cuspidal.\edit{could write this without assuming that $\eT$ splits over a CM field, and then later specialize if needed}
%
%Let $h: \mathbb{S} \to \eT_\bR$ be a homomorphism of $\bR$-group schemes. Then the pair $(\eT, h)$ comprises a Shimura datum, and $h$ determines a conjugacy class of geometric cocharacters $\{\mu_h\}$ defined over the reflex field $\eE$ of $(\eT,h)$. 

In this section we prove Conjecture \ref{conj-PR} for Shimura varieties defined by tori. As in section \ref{sub-SV}, let $(\eT,\{h\})$ be a Shimura datum defined by a torus $\eT$ over $\bQ$, let $\eT^c$ be its maximal cuspidal quotient, and let $\eE$ denote the reflex field. Let $\eK = \eK_p \eK^p$ with $\eK^p$ neat, and $\eK_p = \cT(\bZ_p)$ for $\cT$ the identity component of the locally finite-type N\'eron model of $T = \eT_{\bQ_p}$, so $\eK_p$ is the unique parahoric subgroup of $T(\bQ_p)$. We obtain a zero-dimensional Shimura variety
\begin{align}\label{eq-Ei}
	\Sh_\eK(\eT,\{h\}) = \coprod_{i\in I} \Spec{\eE_i}, 
\end{align}
where each $\eE_i$ is a finite extension of $\eE$. Moreover, by Lemma \ref{lem-E_K}, we know that each $\eE_i$ is isomorphic to the field $\eE_\eK$ defined in (\ref{eq-EU}).

As in Section \ref{sub-locsys}, let $v$ be the place of $\eE$ above $p$ corresponding to our chosen embedding $\eE \hookrightarrow \bar{\bQ} \hookrightarrow \bar{\bQ}_p$, and let $E$ denote the completion of $\eE$ at $v$. Let $\mathcal{O}_E$ and $\cO_{E_i}$ denote the rings of integers in $E$ and $E_i$, respectively. Then 
\begin{align*}
	\sS_\eK(\eT,\{h\}) := \coprod_{i\in I} \Spec{\mathcal{O}_{E_i}}
\end{align*}
provides us with an integral model for $\Sh_\eK(\eT,\{h\})_E$.

We begin by using Lemma \ref{lem-crystalline} to extend the $\cT^c$-shtuka $\sP_{\eK,E}$ over $\Sh_\eK(\eT,\{h\})^\lozenge$ of Proposition \ref{prop-genericfibershtuka} to all of $\sS_\eK(\eT, \{h\})^{\lozenge \slash}$. By Lemma \ref{lem-crystalline}, the $\cT^c$-valued representation induced by the homomorphism
\begin{align}\label{eq-Trep}
	r(\mu)_{\eK,p,\textup{loc}}: \Gamma_{E_\eK} \to \cT^c(\bZ_p)
\end{align}
is crystalline. Define $\sP_{\eK,i}$ to be the $\cT^c$-shtuka over $\Spd (\cO_{E_i})$ associated by Definition \ref{def-greptoshtuka}  to the $\cT^c$-valued crystalline representation of $\Gamma_{E_i}$ given by (\ref{eq-Trep}). Since $\Spec(\cO_{E_i})$ is proper over $\Spec(\bZ_p)$, we have 
\begin{align*}
	\Spd(\cO_{E_i})	\xrightarrow{\sim} (\Spec(\cO_{E_i}))^{\lozenge \slash} 
\end{align*}
for every $i$, by \cite[Rmk. 4.6 (iv)]{Huber1994}, see (\ref{eq-slash}) and the discussion preceding it. By patching together the collection $\{\sP_{\eK,i}\}_{i\in I}$ we can define a $\cT^c$-shtuka $\sP_\eK$ over all of $\sS_\eK(\eT, \{h\})^{\lozenge \slash}$. 

\begin{prop}\label{prop-torusshtuka}
	The $\cT^c$-shtuka $\sP_\eK$ over $\sS_\eK(\eT,\{h\})^{\lozenge \slash}$ is bounded by $\mu^c$, and it extends the $\cT^c$-shtuka $\sP_{\eK,E}$ over $\Sh_\eK(\eT,\{h\})$. 
\end{prop}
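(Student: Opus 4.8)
The plan is to reduce immediately to a single connected component and then invoke the local results of Sections \ref{sub-prismtoshtuka}--\ref{sub-SV}. Since $\sS_\eK(\eT,\{h\}) = \coprod_{i \in I} \Spec \cO_{E_i}$ and $\sP_\eK$ is by construction glued from the shtukas $\sP_{\eK,i}$ over $\Spd(\cO_{E_i}) \cong (\Spec \cO_{E_i})^\lozenge$, both assertions may be checked after restriction to each $\Spd(\cO_{E_i})$. So I would fix $i$, write $\alpha_i$ for the $\cT^c$-valued crystalline representation of $\Gamma_{E_i}$ afforded by $r(\mu)_{\eK,p,\textup{loc}}$ as in $(\ref{eq-Trep})$, and observe that $\sP_{\eK,i} = \sP^{\alpha_i}$ in the notation of Definition \ref{def-greptoshtuka}; the two statements then become statements about $\sP^{\alpha_i}$, which will glue back up.

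For boundedness I would apply Lemma \ref{lem-bdbymu}: it gives that $\sP^{\alpha_i}$ is bounded by the Hodge-Tate cocharacter of $\alpha_i$, so the only remaining point is to identify that cocharacter with $\mu^c$. This is a purely local character-theoretic computation about the reflex norm $(\ref{eq-reflexnorm})$, which is recorded in \cite[Prop. 4.3.14]{KSZ} and was already used in the proof of Proposition \ref{prop-genericfibershtuka} to see that $\sP_{\eK,E}$ has Hodge-Tate cocharacter $\mu^c$; I would simply cite it. Boundedness of $\sP_\eK$ by $\mu^c$ then follows by gluing over $i$.

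For the claim that $\sP_\eK$ extends $\sP_{\eK,E}$, note first that $\Sh_\eK(\eT,\{h\})^\lozenge$ is the disjoint union of the open generic fibres $\Spd(E_i) \subset \Spd(\cO_{E_i})$, so it is enough to match the pullback of $\sP^{\alpha_i}$ to $\Spd(E_i)$ with the restriction of $\sP_{\eK,E}$ to the component $\Spec E_i$. On the one hand, since $\alpha_i$ is crystalline, hence de Rham, Lemma \ref{lem-genericfiber} identifies the former with the de Rham $\cT^c$-shtuka $\sP^{\alpha_i}_\dR$ attached to $\alpha_i$ via \cite[Def. 2.6.6]{PR2021}. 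On the other hand, by Proposition \ref{prop-genericfibershtuka} the shtuka $\sP_{\eK,E}$ is the de Rham shtuka associated in the sense of \cite[\textsection 2.5]{PR2021} to the pro-\'etale $\cT^c(\bZ_p)$-torsor $\bP_\eK$, and the computation in the proof of that proposition together with Remark \ref{rmk-E_K} shows that the restriction of $\bP_\eK$ to the component $\Spec E_i$ is exactly the $\cT^c(\bZ_p)$-torsor given by $(\ref{eq-Trep})$ --- concretely, the stalk of $\bL_{\rho,\Lambda}$ on $\Spec E_i$ is $r(\mu,\rho)_{\eK,p}$ for every $(\rho,\Lambda)$. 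Since forming the associated de Rham shtuka is functorial in the de Rham torsor, the two shtukas over $\Spd(E_i)$ coincide, and gluing over $i \in I$ yields $\sP_\eK|_{\Sh_\eK(\eT,\{h\})^\lozenge} \cong \sP_{\eK,E}$.

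The step I expect to require the most care is this last identification: checking that the de Rham $\cT^c$-shtuka produced by \cite[Def. 2.6.6]{PR2021} from $\bP_\eK$ genuinely matches, component by component, the generic fibre of $\sP^{\alpha_i}$ as computed by Lemma \ref{lem-genericfiber}. This is really a matter of tracing definitions --- both objects are built from the same local crystalline representation $r(\mu)_{\eK,p,\textup{loc}}$ --- but one must keep straight the dictionary between pro-\'etale $\cT^c(\bZ_p)$-torsors on a zero-dimensional Shimura variety, their stalks as $\Gamma_{E_i}$-representations, and the de Rham shtukas they define. Once that bookkeeping is pinned down, everything else is formal.
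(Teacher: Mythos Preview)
Your proposal is correct and follows essentially the same route as the paper: reduce to a single component, invoke Lemma \ref{lem-bdbymu} together with the identification of the Hodge-Tate cocharacter as $\mu^c$ (the paper cites \cite[Lem. 4.4]{KSZ} rather than \cite[Prop. 4.3.14]{KSZ}, but the content is the same) for boundedness, and use Lemma \ref{lem-genericfiber} combined with the fact that the stalk of $\bL_{\rho,\Lambda}$ at a geometric point over $\Spec E_i$ is $r(\mu,\rho)_{\eK,p}$ for the extension claim. Your write-up is more explicit about the bookkeeping in the second step, but the underlying argument is identical.
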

\begin{proof}
	It's enough to show the proposition for each $i \in I$. Note that the Hodge-Tate cocharacter of each $r(\mu, \rho)_{\eK,p}$ is given by $\mu^c$ by \cite[Lem. 4.4.]{KSZ}, so the first point follows from Lemma \ref{lem-bdbymu}. 
	
	The second point follows from Lemma \ref{lem-genericfiber}, since for each $(\Lambda, \rho)$, the stalk of the $\bZ_p$-local system $\bL_{\rho,\Lambda}$ from Section \ref{sub-locsys} at a geometric point $\bar{x}_i$ over $\Spec(E_i)$ is given by the 
	representation $r(\mu,\rho)_{\eK,p}$ of $\Gamma_{E_\eK}$.
\end{proof}

This proves part (b) of Conjecture \ref{conj-PR} for $\Sh_\eK(\eT,\{h\})$. Next we prove part (c) of the conjecture. Let us recall the statement. Let $\sS_\eK = \sS_\eK(\eT,\{h\})$, let $x \in \sS_\eK(k)$, and let $b_x$ be defined as in Conjecture \ref{conj-PR}. We want to show there is an isomorphism
\begin{align*}
	\Theta_x: \widehat{\cM^\textup{int}_{\cT^c, b_x, \mu^c}}_{/x_0} \xrightarrow{\sim} (\widehat{{\sS_{\eK}}}_{/x})^\lozenge,
\end{align*}
of $v$-sheaves over $\Spd(\cO_{\breve{E}})$ such that the pullback $\Theta_x^\ast(\sP_\eK)$ coincides with the universal shtuka on $\cM^\textup{int}_{\cT^c, b_x,\mu^c}$. 

\begin{lemma}\label{lem-fmlcomp}
	Let $x \in \sS_\eK(k)$. Then there is a natural isomorphism
	\begin{align*}
		\Spd(\cO_{\breve{E}}) \xrightarrow{\sim} (\widehat{\sS_\eK}_{/x})^\lozenge.
	\end{align*}
\end{lemma}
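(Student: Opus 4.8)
The plan is to unwind the definition of the formal completion of a $v$-sheaf along a $k$-point and then reduce the statement to the assertion that $\eE_\eK$ is unramified at $v$ over the local reflex field $E$.

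First I would localize. The point $x \in \sS_\eK(k)$ lies in a unique component $\Spec \cO_{E_i}$ of $\sS_\eK = \coprod_{i\in I}\Spec \cO_{E_i}$, and it corresponds to a ring map $\cO_{E_i}\to k$ factoring through the residue field $\kappa_i$ of $\cO_{E_i}$. As the statement of the lemma indicates, $\widehat{\sS_\eK}_{/x}$ is formed over $\Spd(\cO_{\breve E})$, hence from the base change $\sS_\eK \otimes_{\cO_E}\cO_{\breve E} = \coprod_i \Spec\big(\cO_{E_i}\otimes_{\cO_E}\cO_{\breve E}\big)$. Since $\cO_{E_i}$ is a complete discrete valuation ring finite over $\cO_E$, a short Witt-vector computation shows that $\cO_{E_i}\otimes_{\cO_E}\cO_{\breve E}$ is a finite product of complete discrete valuation rings, each with residue field $k$ and fraction field totally ramified over $\breve E$ of degree the ramification index of $E_i/E$; explicitly each factor is $\cO_{\breve{E}_i}$, the ring of integers in the completion $\breve{E}_i$ of the maximal unramified extension of $E_i$. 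Thus the completion of $\sS_\eK\otimes_{\cO_E}\cO_{\breve E}$ at $x$ is $\Spf\cO$, where $\cO \cong \cO_{\breve{E}_i}$ is the direct factor singled out by $x$.

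Next, since $\Spec \cO$ is proper over $\Spec \bZ_p$, the observation recalled in the excerpt (using \cite[Rmk.~4.6(iv)]{Huber1994}) gives $(\widehat{\sS_\eK}_{/x})^\lozenge = (\Spf\cO)^\lozenge = (\Spec\cO)^\lozenge = \Spd(\cO)$, and the structure morphism $\Spd(\cO)\to\Spd(\cO_{\breve E})$ is the $(\,\cdot\,)^\lozenge$ of $\Spec\cO\to\Spec\cO_{\breve E}$. Over the generic fibre the latter is the finite étale cover $\Spd(\mathrm{Frac}\,\cO)\to\Spd(\breve E)$ of degree $[\mathrm{Frac}\,\cO:\breve E]$, so for the structure morphism to be an isomorphism it is necessary, and --- since in that case $\cO$ and $\cO_{\breve E}$ are complete discrete valuation rings with the same fraction field, hence equal --- sufficient, that $\mathrm{Frac}(\cO)=\breve E$, i.e.\ that $E_i/E$ be unramified. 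So it remains to prove that $\eE_\eK$ is unramified at $v$ over $\eE$ (note $E_i\cong E_\eK$ by Remark~\ref{rmk-E_K}, so this handles all $i$). By Remark~\ref{rmk-E_K} the field $\eE_\eK$ is cut out of $\eE^\ab$ by $\ker(r(\mu)_\eK)$, so the ramification of $\eE_\eK/\eE$ at $v$ is governed by the image in $\eT(\bQ)\bs\eT(\bA_f)/\eK$ of the local units $\cO_E^\times \subset E^\times \hookrightarrow \bA_\eE^\times$ under $r(\mu)^{\textup{alg}}$. Since $r(\mu)^{\textup{alg}} = \textup{Nm}_{\eE/\bQ}\circ\Res_{\eE/\bQ}(\mu)$ carries $\cO_E^\times$, placed at the place $v\mid p$, into the factor $\eT(\bQ_p)$ (and is trivial away from $p$), it suffices to check that the local component of $r(\mu)^{\textup{alg}}$ sends $\cO_E^\times$ into $\eK_p = \cT(\bZ_p)$. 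For this one uses that $\mu$ extends to a morphism $\bG_{m,\cO_E}\to\cT_{\cO_E}$ of $\cO_E$-group schemes (with $\cT_{\cO_E}:=\cT\otimes_{\bZ_p}\cO_E$), together with the fact that the norm attached to the commutative group scheme $\cT$ sends $(\Res_{\cO_E/\bZ_p}\cT_{\cO_E})(\bZ_p)=\cT(\cO_E)$ into $\cT(\bZ_p)$; composing these gives $r(\mu)^{\textup{alg}}(\cO_E^\times)\subset\cT(\bZ_p)$, as needed. This is essentially the same integrality input that underlies the assertion, made just before Lemma~\ref{lem-crystalline}, that $r(\mu)_{\eK,p,\textup{loc}}$ already lands in $\cT^c(\bZ_p)$.

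I expect this last point to be the main obstacle. It is exactly here that the parahoric hypothesis on the level at $p$ is used --- for a more general compact open $\eK_p$ the extension $\eE_\eK/\eE$ genuinely becomes ramified at $v$ --- and the argument requires care with models of tori over the ramified base $\cO_E$, in particular with the claim that $\mu$ factors through $\cT_{\cO_E}$ and not merely through the connected N\'eron model over $\cO_E$, and with its compatibility with the norm. (Compare the analogous integrality computations in \cite{KSZ}.) Granting this, everything else is formal.
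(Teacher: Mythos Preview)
Your proposal is correct and follows essentially the same route as the paper: reduce to showing that $E_\eK/E$ is unramified, and then use class field theory to translate this into the assertion that $r(\mu)^{\textup{alg}}$ carries $\cO_E^\times$ into $\cT(\bZ_p)=\eK_p$. The only difference is cosmetic: where you factor the integrality argument as ``extend $\mu$ to $\bG_{m,\cO_E}\to\cT_{\cO_E}$, then apply the norm for $\cT$'', the paper observes directly that the composite $\bQ_p$-homomorphism $\Res_{E/\bQ_p}\bG_m\to T$ extends to $\Res_{\cO_E/\bZ_p}\bG_m\to\cT$ (because $\Res_{\cO_E/\bZ_p}\bG_m$ is smooth with connected fibers and $\cT$ is the connected N\'eron model), which sidesteps the care you flag about models over the possibly ramified base $\cO_E$.
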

\begin{proof}
	Recall that each $E_i$ is isomorphic to $E_\eK$ (see Lemma \ref{lem-E_K}), so it is enough to show $E_\eK$ is an unramified extension of $E$. By definition of $\eE_\eK$, we have
	\begin{align*}
		\Gal(E^\ab / E_\eK) = \ker(\Gal(E^\ab / E) \hookrightarrow \Gal(\eE^\ab / \eE) \xrightarrow{r(\mu)_\eK} \eT(\bQ) \bs \eT(\bA_f) / \eK).
	\end{align*}
	By local class field theory, if $E^\textup{unr}$ denotes the maximal unramified extension of $E$ in $\bar{\bQ}_p$, then the local Artin map restricts to an isomorphism $\cO_E^\times \xrightarrow{\sim} \Gal(E^\ab / E^\textup{unr})$. Therefore, to show $E_\eK$ is unramified, it is enough to show that $\cO_E^\times$ is in the kernel of the composition
	\begin{align}\label{eq-wantkernel}
		E^\times \hookrightarrow \eE^\times \bs \bA_\eE^\times \xrightarrow{\textup{Art}_\eE} \Gal(\eE^\ab / \eE) \xrightarrow{r(\mu)_\eK} \eT(\bQ) \bs \eT(\bA_f) / \eK.
	\end{align}
	The composition (\ref{eq-wantkernel}) can be rewritten as 
	\begin{align}\label{eq-rewrite}
		E^\times \hookrightarrow \eE^\times \bs \bA_\eE^\times \xrightarrow{r(\mu)^\textup{alg}(\bA)} \eT(\bQ) \bs \eT(\bA) \xrightarrow{\textup{pr}_\eK} \eT(\bQ) \bs \eT(\bA_f) / \eK
	\end{align}
	by the definition of $r(\mu)_\eK$. The composition of the first two arrows of (\ref{eq-rewrite}) is given by the evaluation on $\bQ_p$ of the homomorphism of $\bQ_p$-group schemes
	\begin{align}\label{eq-grpschm}
		\Res_{E/\bQ_p} \bG_m \hookrightarrow (\Res_{\eE/\bQ} \bG_m)_{\bQ_p} \xrightarrow{r(\mu)^\textup{alg}} T,
	\end{align}
	composed with $T(\bQ_p) \hookrightarrow \eT(\bA) \to \eT(\bQ) \bs \eT(\bA)$. The homomorphism (\ref{eq-grpschm}) necessarily sends $\cO_E^\times$ into $\cT(\bZ_p) \subset \eK$, since the evaluation of (\ref{eq-grpschm}) on $\bQ_p$-points is continuous, and $\cT(\bZ_p)$ is the unique maximal compact subgroup of $T(\bQ_p)$. The result follows.

%	 Then the ramification of $E_\eK$ is controlled by 
%	\begin{align*}
%		\ker(\cO_E^\times \to \eT^c(\bQ_p) / \cT^c(\bZ_p)),
%	\end{align*}
%	but this kernel is all of $\cO_E^\times$ because the reflex norm is a morphism of group schemes between tori, thus preserves their respective Iwahori subgroups. \edit{write out this whole proof}
\end{proof}

Combining Lemma \ref{lem-fmlcomp} with the isomorphism (\ref{eq-trivtorus}), we obtain an isomorphism
\begin{align}\label{eq-theta}
	\Theta_x: \widehat{\cM^\textup{int}_{\cT^c,b_x,\mu^c}}_{/x_0} \xrightarrow{\sim}  (\widehat{{\sS_{\eK}}}_{/x})^\lozenge.
\end{align}
We remark that the isomorphism in Lemma \ref{lem-fmlcomp} is, in fact, the inverse of the structure morphism for $(\widehat{{\sS_{\eK}}}_{/x})^\lozenge$ over $\Spd(\cO_{\breve{E}})$, so in particular $\Theta_x$ is a uniquely determine $\Spd(\cO_{\breve{E}})$-morphism. It remains to study the pullback of $\sP_{\eK}$ along $\Theta_x$. 

Recall that $\eT^c$ is the maximal cuspidal quotient of $\eT$. As in Section \ref{sub-SV}, the map $\eT \to \eT^c$ extends to $\cT \to \cT^c$, where $\cT^c$ is the connected N\'eron model of $\eT^c_{\bQ_p}$. Let $\eK' = \cT^c(\bZ_p)\eK^{p,c}$, where $\eK^{p,c}$ is the image of $\eK^p$ under $\eT \to \eT^c$, and let $\eE^c$ denote the reflex field of the Shimura datum $(\eT^c, \{h^c\})$, and $E^c$ denote the completion of $\eE^c$ at the place $v'$ induced by $v$. Let $\eE_i$ be defined as in (\ref{eq-Ei}), and let $E_i = \eE \otimes_{\eE} E$. Then, by the valuative criterion for properness of $\sS_{\eK'}$ over $\cO_{E^c}$, the morphism 
\begin{align*}
	\Spec(E_i) \hookrightarrow \Sh_\eK(\eT,\{h\})_E \to \Sh_{\eK'}(\eT^c,\{h^c\})\otimes_{\eE^c} E \to \sS_{\eK'}\otimes_{\cO_{\eE^c}} \cO_{E}
\end{align*}
extends to a morphism $\Spec(\cO_{E_i}) \to \sS_{\eK'}\otimes_{\cO_{\eE^c}} \cO_{E}$. Taken together for each $i$, we obtain a morphism of integral models 
\begin{align}\label{eq-intmorphism}
	\sS_\eK \to \sS_{\eK'} \otimes_{\cO_{E^c}} \cO_E.
\end{align}
As in the case of $\sS_\eK$, we have 
\begin{align*}
	\sS_{\eK'} = \coprod_i \Spec(\cO_{E_{\eK'}}),
\end{align*}
where $E_{\eK'}$ is defined as in (\ref{eq-EU}). We see that $E_{\eK'} \subset E_{\eK}$, hence $E_{\eK'}$ is a finite unramified extension of $E$, and $\cO_{\eK'} \to \cO_{\eK}$ is finite \'etale. It follows that, for any point $x \in \sS_\eK(k)$, we have a commutative diagram
\begin{equation}\label{diagram}
	\begin{tikzcd}
		& \Spd(\cO_{\breve{E}}) 
			\arrow[dl, "\sim"'] \arrow[dr, "\sim"]
		& \\
		(\widehat{\sS_\eK}_{/x})^\lozenge
			\arrow[rr, "\sim"]
		& & (\widehat{\sS_{\eK'}}_{/x'})^\lozenge, 
	\end{tikzcd}
\end{equation}
where $x'$ denotes the image of $x$ in $\sS_{\eK'}(k)$, and the bottom arrow is induced by (\ref{eq-intmorphism}).

We claim that the pullback of $\sP_{\eK'}$ along (\ref{eq-intmorphism}) is $\sP_\eK$. By \cite[Cor. 2.7.10]{PR2021}, it is enough to show that $\sP_{\eK', E'}$ pulls back to $\sP_{\eK,E}$, and by functoriality of the construction in \cite[Def. 2.6.6]{PR2021}, we are in turn reduced to showing that the pullback of the $\bP_{\eK'}$ is $\bP_\eK$, where $\bP_{\eK'}$ and $\bP_\eK$ are defined as in (\ref{eq-cover}). But this is straightforward to check from the definition of $\bP_\eK$, see e.g., \cite[(4.3.5)]{IKY}. 

Now, by the diagram  (\ref{diagram}), to show that $\sP_\eK$ pulls back to the universal shtuka along $\Theta_x$, it is enough to show the corresponding result for $\sP_{\eK'}$. Hence for the remainder of this section we assume $\eT = \eT^c$. 

The idea for computing the pullback of $\sP_\eK$ is to use functoriality to reduce the result to Lubin-Tate theory. Let $T_1$ be the $\bQ_p$-torus 
\begin{align*}
	T_1 = \Res_{E_\eK/\bQ_p} \bG_m
\end{align*}
with $\bZ_p$-model given by the Iwahori group scheme $\cT_1 = \Res_{\cO_{E_\eK}/\bZ_p} \bG_m$. We identify $(T_1)_{\overbar{\bQ}_p}$ with $\prod_{\tau \in \Hom_{\bQ_p}(E_\eK, \overbar{\bQ}_p)} \bG_{m,\overbar{\bQ}_p}$, and we define the cocharacter $\mu_1: \bG_m \to (T_1)_{\overbar{\bQ}_p}$ by $z \mapsto (z, 1,\dots, 1)$, with the first factor indexed by the chosen embedding $E_\eK \hookrightarrow \overbar{\bQ}_p$. Denote by $f_1$ the homomorphism of $\bQ_p$-group schemes given by
\begin{align*}
	f_1: T_1 = \Res_{E_\eK/\bQ_p} \bG_m \xrightarrow{N_{E_\eK/E}} \Res_{E/\bQ_p} \bG_m \hookrightarrow (\Res_{\eE/\bQ} \bG_m)_{\bQ_p} \xrightarrow{r(\mu)^\textup{alg}_{\bQ_p}} T,
\end{align*}
where $r(\mu)^\textup{alg}$ is the morphism defined in (\ref{eq-reflexnorm}). One can check that $\mu = f_1 \circ \mu_1$, so $f_1$ extends to a morphism of local Shimura data $(T_1, b_1, \mu_1) \to (T,b_x,\mu)$, where $b_1$ is the unique element of $B(T_1, \mu_1^{-1})$. Moreover, $f_1$ extends to a morphism $\cT_1 \to \cT$ by the N\'eron mapping property and functoriality of identity components, so by \cite[Prop. 5.3.1]{PR2022}, the morphism of $\Spd(\cO_{\breve{E}})$-$v$-sheaves $\cM^\textup{int}_{\cT_1,b_1,\mu_1} \to \cM^\textup{int}_{\cT,b_x,\mu}$ induces an isomorphism
\begin{align*}
	\hat{f_1}: \widehat{\cM^\textup{int}_{\cT_1,b_1,\mu_1}}_{/x_1} \xrightarrow{\sim} \widehat{\cM^\textup{int}_{\cT,b_x,\mu}}_{/x_0},
\end{align*}
where $x_1$ is the base point of $\cM^\textup{int}_{\cT_1,b_1,\mu_1}$ (see (\ref{eq-functorialitycompletions})). Hence we obtain a commutative diagram
\begin{equation} \label{bigdiagram}
	\begin{tikzcd}[row sep=tiny]
		& \widehat{\cM^\textup{int}_{\cT_1,b_1,\mu_1}}_{/x_1} 
			\arrow[r, hook] \arrow[dd,"\sim" labl]
		& \cM^\textup{int}_{\cT_1,b_1,\mu_1}
			\arrow[dd]
		\\ \Spd(\cO_{\breve{E}}) 
			\arrow[ur, "\Psi_{x,1}"] \arrow[dr, "\Psi_x"']
		& & \\
		& \widehat{\cM^\textup{int}_{\cT,b_x,\mu}}_{/x_0}
			\arrow[r, hook]
		& \cM^\textup{int}_{\cT,b_x,\mu},
	\end{tikzcd}
\end{equation}
where $\Psi_{x,1}$ and $\Psi_x$ are the inverses of the isomorphisms coming from (\ref{eq-trivtorus}). We remark that the left triangle commutes by because $\hat{f}_1$ is a morphism over $\Spd(\cO_{\breve{E}})$, and because $\Psi_{x,1}$ an $\Psi_x$ are the inverses of the structure morphisms.

Denote by $\sP^\univ$ and $\sP_1^\univ$ the universal shtukas on $\cM^\textup{int}_{\cT,b_x,\mu}$ and $\cM^\textup{int}_{\cT_1,b_1,\mu_1}$, respectively, and let $\sP_{\eK,x}$ denote the restriction of $\sP_\eK$ to $(\widehat{\sS_\eK}_{/x})^\lozenge$. Since $\Theta_x$ is an isomorphism, to show $\Theta_x^\ast(\sP_{\eK,x}) \cong \sP^\univ$ it is enough to show $\Psi_x^\ast(\sP^\univ) \cong \sP_{\eK,x}$. By (\ref{eq-univpullback}), we have $\hat{f_1}^\ast (\sP^\univ) \cong \sP_1^\univ \times^{\cT_1} \cT$, so by (\ref{bigdiagram}) we see that
\begin{align}\label{eq-univshtuka}
	\Psi_x^\ast (\sP^\univ) \cong \Psi_{x,1}^\ast(\sP_1^\univ) \times^{\cT_1} \cT.
\end{align}

Let $\Gamma_{E_\eK,0}$ denote the inertia subgroup of $\Gamma_{E_\eK}$, and let
\begin{align}\label{eq-LT}
	\textup{LT}: \Gamma_{E_\eK,0} \to \cO_{E_\eK}^\times
\end{align}
denote the Lubin-Tate character \cite[\textsection 2.1]{Serre1979} Then $\textup{LT}$ induces a $\cT_1$-valued crystalline representation $\alpha_0$ of $\Gamma_{E_\eK,0}$. By the arguments in \cite[Prop. 4.3.14]{KSZ}, the restriction of $r(\mu)_{\eK,p,\textup{loc}}$ (see (\ref{eq-crystallinerep})) to $\Gamma_{E_\eK,0}$ is given by the composition 
\begin{align*}
	\Gamma_{E_\eK,0} \xrightarrow{\textup{LT}} \cO_{E_\eK}^\times = \cT_1(\bZ_p) \xrightarrow{f_1} \cT(\bZ_p).
\end{align*}
Denote by $\sP_1$ the $\cT_1$-shtuka associated to $\alpha_0$ by Definition \ref{def-greptoshtuka}. Then Lemma \ref{lem-functorialities} implies that there is a natural isomorphism
\begin{align}\label{eq-shtukas}
	\sP_1 \times^{\cT_1} \cT \xrightarrow{\sim} \sP_{\eK,x}.
\end{align}
By combining (\ref{eq-univshtuka}) and (\ref{eq-shtukas}), we see that we are reduced to showing the following proposition.

\begin{prop}\label{prop-LT}
	There is a natural isomorphism of shtukas
	\begin{align*}
		\Psi_{x,1}^\ast(\sP_1^\univ) \cong \sP_1.
	\end{align*}
\end{prop}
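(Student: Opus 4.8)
The plan is to reduce, via the Tannakian formalism, to a comparison of vector bundle shtukas over $\Spd(\cO_{\breve E})$, and then to recognise both of them as the shtuka attached to a Lubin--Tate formal $\cO_{E_\eK}$-module. First I would fix the faithful representation $\Lambda_0 = \cO_{E_\eK}$, viewed as a finite free $\bZ_p$-module with $\cT_1 = \Res_{\cO_{E_\eK}/\bZ_p}\bG_m$ acting by multiplication, so that $f\colon \cT_1 \hookrightarrow \GL(\Lambda_0)$ is a closed immersion; $f$ carries $(T_1,b_1,\mu_1)$ to a local Shimura datum $(\GL(\Lambda_0),b,\mu_{\textup{LT}})$ with $\mu_{\textup{LT}} = (1,0,\dots,0)$ minuscule and $b$ basic — in fact the one whose Rapoport--Zink space is the Lubin--Tate tower — and extends to $\cT_1 \to \GL(\Lambda_0)$. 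By Lemma \ref{lem-functorialities}(a), $\sP_1 \times^{\cT_1} \GL(\Lambda_0)$ is the vector bundle shtuka over $\Spd(\cO_{\breve E})$ attached by Definition \ref{cons-crystaltoshtuka} to the crystalline representation $L_0\colon \Gamma_{E_\eK,0} = \Gamma_{\breve E} \xrightarrow{\textup{LT}} \cO_{E_\eK}^\times \hookrightarrow \GL(\Lambda_0)$; and by (\ref{eq-univpullback}) together with the compatibility of $\Psi_{x,1}$ with the functoriality morphism (\ref{eq-functorialityILSV}), $\Psi_{x,1}^\ast(\sP_1^\univ) \times^{\cT_1}\GL(\Lambda_0)$ is the pullback of the $\GL(\Lambda_0)$-universal shtuka over $\cM^\textup{int}_{\GL(\Lambda_0),b,\mu_{\textup{LT}}}$ along $\Spd(\cO_{\breve E}) \xrightarrow{\Psi_{x,1}} \cM^\textup{int}_{\cT_1,b_1,\mu_1} \to \cM^\textup{int}_{\GL(\Lambda_0),b,\mu_{\textup{LT}}}$. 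Since $f$ is a closed immersion, $\Rep_{\bZ_p}(\cT_1)$ is generated under subquotients, duals and tensor products by $\Lambda_0$, so by the Tannakian formalism (\cite[Thm. 19.5.1]{SW2020}) it will be enough to produce a natural isomorphism of these two vector bundle shtukas compatibly with the evident $\cT_1$-structures, the compatibility being automatic from the description below.

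Next I would identify both of these vector bundle shtukas with the shtuka of a Lubin--Tate formal group. The $\Spd(\cO_{\breve E})$-point above factors through the formal completion of $\cM^\textup{int}_{\GL(\Lambda_0),b,\mu_{\textup{LT}}}$ at its base point (the image of $x_1$), i.e.\ through the Lubin--Tate deformation space; using the theory of Lubin--Tate formal groups to identify the CM point coming from the $\cO_{E_\eK}$-action, this point classifies a Lubin--Tate formal $\cO_{E_\eK}$-module $\cF$ over $\cO_{\breve E}$. Under the identification of $\cM^\textup{int}_{\GL(\Lambda_0),b,\mu_{\textup{LT}}}$, in this minuscule case, with the Rapoport--Zink moduli of $p$-divisible groups — under which the universal shtuka is the shtuka of the universal $p$-divisible group (\cite[\textsection 25]{SW2020}, \cite{PR2022}) — the left-hand side is therefore the shtuka of $\cF_{/\cO_{\breve E}}$. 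On the other hand, the character $\textup{LT}$ in (\ref{eq-LT}) is exactly the one describing the action of $\Gamma_{E_\eK,0} = \Gamma_{\breve E}$ on the $p$-adic Tate module $T_p\cF$ as an $\cO_{E_\eK}$-module, so $L_0 \cong T_p\cF$; and the shtuka of $\cF_{/\cO_{\breve E}}$ agrees with the shtuka attached to $T_p\cF$ by Definition \ref{cons-crystaltoshtuka}, because both are obtained by spreading out over $\cY_{[0,\infty)}$ the Breuil--Kisin--Fargues module of $\cF$ over $W(\cO_C^\flat)$, which by \cite[Cons. 5.5]{BS21} (compare the proof of Lemma \ref{lem-inclusion}) coincides with the Breuil--Kisin--Fargues module of $U(T_p\cF)$. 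This gives the desired isomorphism after $\cO_C$-base change; it then descends to $\Spd(\cO_{\breve E})$ since shtukas form a $v$-stack and the descent data along the $v$-cover $\Spd(\cO_C) \to \Spd(\cO_{\breve E})$ are on both sides induced by the Galois action on $W(\cO_C^\flat)$, exactly as in the proof of Lemma \ref{lem-inclusion}, after which the Tannakian reduction of the first step concludes.

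The main obstacle is the middle step: justifying that the base point pulls the universal shtuka over $\cM^\textup{int}_{\GL(\Lambda_0),b,\mu_{\textup{LT}}}$ (equivalently, $\cT_1$-equivariantly, over $\cM^\textup{int}_{\cT_1,b_1,\mu_1}$) back to the shtuka of the Lubin--Tate formal $\cO_{E_\eK}$-module with its canonical $\cT_1$-structure, and then reconciling, with matching normalisations, the several constructions of a shtuka out of a $p$-divisible group, out of its Tate module, and out of the associated prismatic $F$-crystal. This is precisely where the theory of Lubin--Tate formal groups genuinely enters, both to pin down that the base point deforms to $\cF$ and to identify the $\Gamma_{E_\eK,0}$-action on $T_p\cF$ with the character $\textup{LT}$ of (\ref{eq-LT}).
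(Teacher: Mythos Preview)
Your approach is essentially the same as the paper's, with two differences in execution worth noting.

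First, the Tannakian push to $\GL(\Lambda_0)$ is unnecessary: the paper observes that $(\cT_1,b_1,\mu_1)$ together with $\cT_1$ already arises from an RZ-datum of EL type $\mathcal{D}=(E_\eK,E_\eK,\cO_{E_\eK},\mathcal{L})$, so that $\cM^\textup{int}_{\cT_1,b_1,\mu_1}$ is itself (the diamond of) a Rapoport--Zink formal scheme $\cM_\LT$ by \cite[Cor.~25.1.3]{SW2020}. Then $\Psi_{x,1}$ is induced by a morphism of formal schemes $\Spf\cO_{\breve E}\to\cM_\LT$ (via \cite[Prop.~18.4.1]{SW2020}), which directly classifies a $p$-divisible group with $\cO_{E_\eK}$-action; by Hopkins--Gross deformation theory this is the base change of the Lubin--Tate formal group, and no reconstruction of the $\cT_1$-structure from a $\GL(\Lambda_0)$-shtuka is needed.

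Second, for the crucial comparison between the shtuka of the $p$-divisible group and the shtuka attached to its Tate module, your argument via a single BKF-module over $W(\cO_C^\flat)$ followed by $v$-descent is morally right but imprecisely referenced: \cite[Cons.~5.5]{BS21} concerns the $B_\dR^+$-lattice and does not by itself give the identification you claim. The paper works instead at the level of prismatic $F$-crystals over $\Spf\cO_{\breve E}$: by \cite[Prop.~4.3.7]{ALB}, the shtuka associated to a $p$-divisible group $X$ via \cite[Ex.~2.3.2]{PR2021} is obtained from the prismatic Dieudonn\'e crystal $\cM_\Prism(X)$ restricted to perfect prisms, and by \cite[Prop.~3.34]{DLMS} one has $\cM_\Prism(X)\cong U(T_pX)$ as prismatic $F$-crystals. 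This yields the comparison directly over $\Spd(\cO_{\breve E})$, avoiding the descent step.
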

\begin{proof}
	First note that the local Shimura datum $(T_1, b_1,\mu_1)$ along with the parahoric group scheme $\cT_1$ actually come from an RZ-datum of EL-type (see \cite[\textsection 24.3]{SW2020}). Fix a uniformizer $\pi$ of $\cO_{E_\eK}$. In this case, the RZ-datum is given by the tuple $\mathcal{D} = (E_\eK, E_\eK, \cO_{E_\eK}, \mathcal{L})$, where $\mathcal{L}$ is the lattice chain given by multiples of $\cO_{E_\eK}$, i.e., $\mathcal{L} =\{ \pi^k\cO_{E_\eK}\}_{k \in \mathbb{Z}}$. Denote by $\cM_{\LT}$ the Rapoport-Zink formal scheme associated to $\mathcal{D}$. By \cite[Cor. 25.1.3]{SW2020}, there is a natural isomorphism
	\begin{align*}
		(\cM_{\LT})^\lozenge \xrightarrow{\sim} \cM^\textup{int}_{\cT_1, b_1, \mu_1}.
	\end{align*}
	By \cite[Prop. 18.4.1]{SW2020}, the morphism 
	\begin{align*}
		\Spd(\cO_{\breve{E}}) \xrightarrow{\Psi_{x,1}} \widehat{\cM^\textup{int}_{\cT_1,b_1,\mu_1}}_{/x_1} \to \cM^\textup{int}_{\cT_1,b_1,\mu_1}
	\end{align*}
	is induced by a unique morphism of formal schemes over $\Spf(\cO_{\breve{E}})$
	\begin{align}\label{eq-fmlschhom}
		\Spf(\cO_{\breve{E}}) \to \cM_\LT.
	\end{align}
	By \cite[3.78]{RZ1996}, for any locally $p$-nilpotent $\cO_{\breve{E}}$-scheme $S$, $\cM_\LT(S)$ parametrizes pairs $(X,\rho)$ consisting of a $p$-divisible group over $S$ with an action of $\cO_E$, such that the induced action of $\cO_E$ on $\textup{Lie }X$ agrees with the natural one, and $\rho$ is a quasi-isogeny between the $p$-divisible group $X_{b_1}$ associated to $b_1$ by Dieudonn\'e theory and $X$ modulo $p$. In particular, the morphism (\ref{eq-fmlschhom}) determines a $p$-divisible group $X$ over $\Spf(\cO_{\breve{E}})$ (equivalently, over $\Spec(\cO_{\breve{E}})$, see \cite[Lem. 2.4.4]{deJong1996}), and the shtuka $\Psi_{x,1}^\ast(\sP_1^\univ)$ is given by the shtuka associated to $X$ by \cite[Ex. 2.3.2]{PR2021}.
	
	By deformation theory \cite[Prop. 4.2]{DrinfeldEllipticModules}, up to isomorphism there is a unique $p$-divisible group $X$ over $\Spec(\cO_{\breve{E}})$ as in the preceding paragraph. Hence $X$ is given by the base change to $\Spf(\cO_{\breve{E}})$ of the Lubin-Tate formal group $X_\LT$ over $\Spec(\cO_{E_\eK})$. It follows that the representation of $\Gamma_{E_\eK,0}$ given by the Tate module of $X$ is the restriction of the representation of $\Gamma_{E_\eK}$ given by the Tate module of $X_\LT$. On the other hand, by \cite[A.4 Prop. 4]{Serre68}, the latter is given by the Lubin-Tate character, $\LT$ (see (\ref{eq-LT})). Therefore, to complete the proof of the proposition, it remains only to show that, for any $p$-divisible group $X$ over $\Spec(\cO_{\breve{E}})$, the shtuka associated to $X$ by \cite[Ex. 2.3.2]{PR2021} is the same as that associated to the crystalline representation given by the Tate module of $X$ by Definition \ref{cons-crystaltoshtuka}.
	
	Let $\cM_\Prism(X)$ be the prismatic Dieudonn\'e crystal associated to $X$ by \cite{ALB}. By \cite[Prop. 4.3.7]{ALB}, the shtuka associated to $X$ by \cite[Ex. 2.3.2]{PR2021} is given by the pullback of the BKF-module over $\Spd(\cO_{\breve{E}})$ given by restricting $\cM_\Prism(X)$ to perfect prisms as in Section \ref{sub-prismtoshtuka}. On the other hand, by \cite[Prop. 3.34]{DLMS}, the prismatic $F$-crystal associated to $T_p(X)$ is exactly $\cM_\Prism(X)$. Thus it follows from the construction in \ref{sub-prismtoshtuka} that the shtuka over $\Spd(\cO_{\breve{E}})$ associated to $T_p(X)$ coincides with the one associated to $X$ by \cite[Ex. 2.3.2]{PR2021}. This proves the claim from the previous paragraph, and completes the proof of the proposition.
\end{proof}

\begin{thm}\label{mainthm}
	Conjecture \ref{conj-PR} holds when $\eG = \eT$ is a torus.
\end{thm}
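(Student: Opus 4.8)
The plan is to verify the three conditions (a)--(c) of Conjecture \ref{conj-PR} for the system $\sS_\eK = \sS_\eK(\eT,\{h\}) = \coprod_{i\in I} \Spec \cO_{E_i}$, assembling the results already established. I would begin by recording the geometry of the model. By Remark \ref{rmk-E_K} each $E_i$ is isomorphic to $E_\eK$, and the proof of Lemma \ref{lem-fmlcomp} shows $E_\eK$ is a finite \emph{unramified} extension of $E$; hence each $\cO_{E_i}=\cO_{E_\eK}$ is a discrete valuation ring which is finite \'etale over $\cO_E$. Therefore $\sS_\eK$ is a normal, flat $\cO_E$-scheme, it is finite \'etale over $\cO_E$, and in particular it is proper over $\Spec \cO_E$; its generic fiber is $\Sh_\eK(\eT,\{h\})_E$. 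For ${\eK'}^p \subset \eK^p$ the finite \'etale morphism $\Sh_{\eK'}(\eT,\{h\})_E \to \Sh_\eK(\eT,\{h\})_E$ extends uniquely to a finite \'etale morphism $\sS_{\eK'} \to \sS_\eK$ (both schemes being finite \'etale over $\cO_E$), and the same applies to the prime-to-$p$ Hecke operators $[g]$; this handles the ``prime-to-$p$ Hecke correspondences'' part of condition (a).

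For the extension property in condition (a) I would argue level by level. Let $R$ be a discrete valuation ring of mixed characteristic $(0,p)$ over $\cO_E$; since $p$ lies in the maximal ideal of $R$ one has $R[1/p]=\operatorname{Frac}(R)$ and $\cO_E[1/p]=E$. Given an $R[1/p]$-point of $\Sh_\eK(\eT,\{h\})_E$, that is, a $\operatorname{Frac}(R)$-point of the open subscheme $\Sh_\eK(\eT,\{h\})_E \subset \sS_\eK$ lifting $\Spec \operatorname{Frac}(R) \to \Spec \cO_E$, the valuative criterion of properness for $\sS_\eK \to \Spec \cO_E$ produces a unique $R$-point of $\sS_\eK$; conversely every $R$-point of $\sS_\eK$ restricts to an $R[1/p]$-point on the generic fiber, and these two operations are mutually inverse. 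Passing to the inverse limit over $\eK^p$ yields the desired identity $\bigl(\varprojlim_{\eK^p}\Sh_\eK(\eT,\{h\})_E\bigr)(R[1/p]) = \bigl(\varprojlim_{\eK^p}\sS_\eK\bigr)(R)$. Properness of $\sS_\eK$ over $\cO_E$ and of $\Sh_\eK(\eT,\{h\})_E$ over $E$ was noted above, completing condition (a).

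Condition (b) is precisely Proposition \ref{prop-torusshtuka}: the $\cT^c$-shtuka $\sP_\eK$ over $\sS_\eK$ attached to $r(\mu)_{\eK,p,\textup{loc}}$ by Definition \ref{def-greptoshtuka} is bounded by $\mu^c$ and extends $\sP_{\eK,E}$. For condition (c), given $x\in\sS_\eK(k)$ with its associated $b_x\in B(G^c,(\mu^c)^{-1})$, I would take $\Theta_x$ to be the isomorphism (\ref{eq-theta}), obtained by combining Lemma \ref{lem-fmlcomp} with the trivialization (\ref{eq-trivtorus}) of $\widehat{\cM^\textup{int}_{\cT^c,b_x,\mu^c}}_{/x_0}$. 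It then remains to identify $\Theta_x^\ast \sP_\eK$ with the universal shtuka, equivalently $\Psi_x^\ast \sP^\univ \cong \sP_\eK$. Using the morphism $\sS_\eK \to \sS_{\eK^c}$, the compatibility diagram (\ref{diagram}), and the fact that $\sP_{\eK^c}$ pulls back to $\sP_\eK$, this reduces to the cuspidal case $\eT=\eT^c$. In that case the auxiliary torus $T_1 = \Res_{E_\eK/\bQ_p}\bG_m$ with datum $(T_1,b_1,\mu_1)$, the functoriality isomorphism $\hat f_1$, and diagram (\ref{bigdiagram}) give $\Psi_x^\ast \sP^\univ \cong \Psi_{x,1}^\ast(\sP_1^\univ)\times^{\cT_1}\cT$ via (\ref{eq-univpullback}), while Lemma \ref{lem-functorialities} gives $\sP_1 \times^{\cT_1}\cT \cong \sP_\eK\res_{\Spd(\cO_{\breve E})}$; so everything comes down to $\Psi_{x,1}^\ast(\sP_1^\univ)\cong \sP_1$, which is Proposition \ref{prop-LT}. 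Since $\sP_\eK$ and $\Theta_x$ are built functorially in $\eK^p$, the $\Theta_x$ are compatible with the prime-to-$p$ Hecke action, and uniqueness of the system is a general theorem of Pappas--Rapoport which I need not re-prove.

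The one genuinely delicate point --- already settled in the propositions I would invoke --- is condition (c), and within it the identification of the pulled-back shtuka with the universal one. The difficulty is that the shtuka over $\Spd(\cO_{\breve E})$ is defined prismatically (through Theorem \ref{thm-bs} and Definition \ref{cons-crystaltoshtuka}), whereas the universal shtuka on the integral local Shimura variety is defined moduli-theoretically; reconciling them requires passing through the Rapoport--Zink description of $\cM^\textup{int}_{\cT_1,b_1,\mu_1}$, the uniqueness of the Lubin--Tate $p$-divisible group over $\Spec \cO_{\breve E}$, and the comparison between the shtuka attached to a $p$-divisible group and the prismatic $F$-crystal attached to its Tate module (\cite{ALB}, \cite{DLMS}). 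All the remaining checks --- the geometry of $\sS_\eK$ and condition (a) --- are elementary once $E_\eK/E$ is known to be unramified.
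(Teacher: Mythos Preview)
Your proposal is correct and follows essentially the same approach as the paper: part (a) via the valuative criterion of properness and the finite \'etaleness of the models over $\cO_E$ (using Lemma \ref{lem-fmlcomp}), part (b) via Proposition \ref{prop-torusshtuka}, and part (c) by combining Lemma \ref{lem-fmlcomp} with (\ref{eq-trivtorus}) to define $\Theta_x$, reducing to the cuspidal case via (\ref{diagram}), and then invoking (\ref{bigdiagram}), (\ref{eq-univpullback}), Lemma \ref{lem-functorialities}, and Proposition \ref{prop-LT}. Your write-up simply makes explicit the details the paper leaves to the phrase ``the remarks above it.''
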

\begin{proof}
	For the first part of part (a), we can reduce to a single compact open subgroup $\eK = \eK_p\eK^p$, in which case the result follows by the valuative criterion for properness of $\cO_{E_\eK}$ over $\cO_E$. The rest of part (a) follows from the construction of $\sS_\eK$ and Lemma \ref{lem-fmlcomp}. Part (b) follows from Proposition \ref{prop-torusshtuka}, and part (c) follows from Proposition \ref{prop-LT} and the remarks above it.
\end{proof}

%\bibliographystyle{amsalpha}
%\bibliography{Refs}

\providecommand{\bysame}{\leavevmode\hbox to3em{\hrulefill}\thinspace}
\providecommand{\MR}{\relax\ifhmode\unskip\space\fi MR }
% \MRhref is called by the amsart/book/proc definition of \MR.
\providecommand{\MRhref}[2]{%
	\href{http://www.ams.org/mathscinet-getitem?mr=#1}{#2}
}
\providecommand{\href}[2]{#2}

\end{document}